\newcommand{\TheTitle}{Discrete Total Variation with Finite Elements and Applications to Imaging} 
\newcommand{\TheAuthors}{M. Herrmann, R. Herzog, S. Schmidt, J. Vidal and G. Wachsmuth}
\title{{\TheTitle}\thanks{This work was supported by DFG grants HE~6077/10--1 and SCHM~3248/2--1 within the \href{https://spp1962.wias-berlin.de}{Priority Program SPP~1962} (\emph{Non-smooth and Complementarity-based Distributed Parameter Systems: Simulation and Hierarchical Optimization}), which is gratefully acknowledged.}}
\author{
	Marc Herrmann\thanks{Julius-Maximilians-Universität Würzburg, Faculty of Mathematics and Computer Science, Lehrstuhl für Mathematik~VI, Emil-Fischer-Straße~40, 97074 Würzburg, Germany (MH: \email{marc.herrmann@mathematik.uni-wuerzburg.de}; STS: \email{stephan.schmidt@mathematik.uni-wuerzburg.de}, \url{https://www.mathematik.uni-wuerzburg.de/\string~schmidt}).}
	\and
	Roland Herzog\thanks{Technische Universität Chemnitz, Faculty of Mathematics, Professorship Numerical Mathematics (Partial Differential Equations), 09107 Chemnitz, Germany (RH: \email{roland.herzog@mathematik.tu-chemnitz.de}, \url{https://www.tu-chemnitz.de/herzog}, JV: \email{jose.vidal-nunez@mathematik.tu-chemnitz.de}, \url{https://www.tu-chemnitz.de/mathematik/part_dgl/people/vidal}).}
	\and
	Stephan Schmidt\footnotemark[2]
	\and
	Jos{\'e} Vidal\footnotemark[3]
	\and
	Gerd Wachsmuth\thanks{Brandenburgische Technische Universität Cottbus-Senftenberg, Institute of Mathematics, Chair of Optimal Control, Platz der Deutschen Einheit~1, 03046 Cottbus, Germany (\email{gerd.wachsmuth@b-tu.de}, \url{https://www.b-tu.de/fg-optimale-steuerung}).}
}
\begin{document}

\maketitle

 \begin{abstract} 
	The total variation (TV)-seminorm is considered for piecewise polynomial, globally discontinuous (DG) and continuous (CG) finite element functions on simplicial meshes.
	A novel, discrete variant (DTV) based on a nodal quadrature formula is defined.
	DTV has favorable properties, compared to the original TV-seminorm for finite element functions.
	These include a convenient dual representation in terms of the supremum over the space of Raviart--Thomas finite element functions, subject to a set of simple constraints.
	It can therefore be shown that a variety of algorithms for classical image reconstruction problems, including TV-$L^2$ and TV-$L^1$, can be implemented in low and higher-order finite element spaces with the same efficiency as their counterparts originally developed for images on Cartesian grids.
\end{abstract}

\ifthenelse{\boolean{ispreprint}}%
	{%
		\begin{keywords}
			discrete total variation,
			dual problem,
			image reconstruction,
			numerical algorithms
		\end{keywords}
	}%
	{
		\keywords{%
			discrete total variation,
			dual problem,
			image reconstruction,
			numerical algorithms
		}
	}

\ifthenelse{\boolean{ispreprint}}%
{}{
\subclass{%
	94A08, 
	68U10, 
	49M29, 
	65K05  
}
}

\section{Introduction}
\label{sec:Introduction}

The total-variation (TV)-seminorm $\abs{\,\cdot\,}_{TV}$ is ubiquitous as a regularizing functional in image analysis and related applications; see for instance \cite{RudinOsherFatemi1992,EsedogluOsher2004,ChanEsedoglu2005,ChambolleCasellesCremersNovagaPock2010}.
When $\Omega \subset \R^2$ is a bounded domain, this seminorm is defined as
\begin{multline}
	\label{eq:continuous_TV}
	\ifthenelse{\boolean{ispreprint}}{\hfill}{}
	\abs{u}_{TV(\Omega)} 
	\ifthenelse{\boolean{ispreprint}}{}{\\}
	\coloneqq
	\sup \left\{ \int_\Omega u \div \bp \, \dx: \bp \in C^\infty_c(\Omega;\R^2), \; \abs{\bp}_{s^*} \le 1 \right\},
	\ifthenelse{\boolean{ispreprint}}{\hfill}{}
\end{multline}
where $s \in [1,\infty]$, $s^* = \frac{s}{s-1}$ denotes the conjugate of $s$ and $\abs{\,\cdot\,}_{s^*}$ is the usual $s^*$-norm of vectors in $\R^2$.
Frequent choices include $s = 2$ (the isotropic case) and $s = 1$, see \cref{fig:illustration_of_anisotropy_DG0}.
\begin{figure*}[htb]
	\centering
	\raisebox{10mm}{\includegraphics[height=30mm]{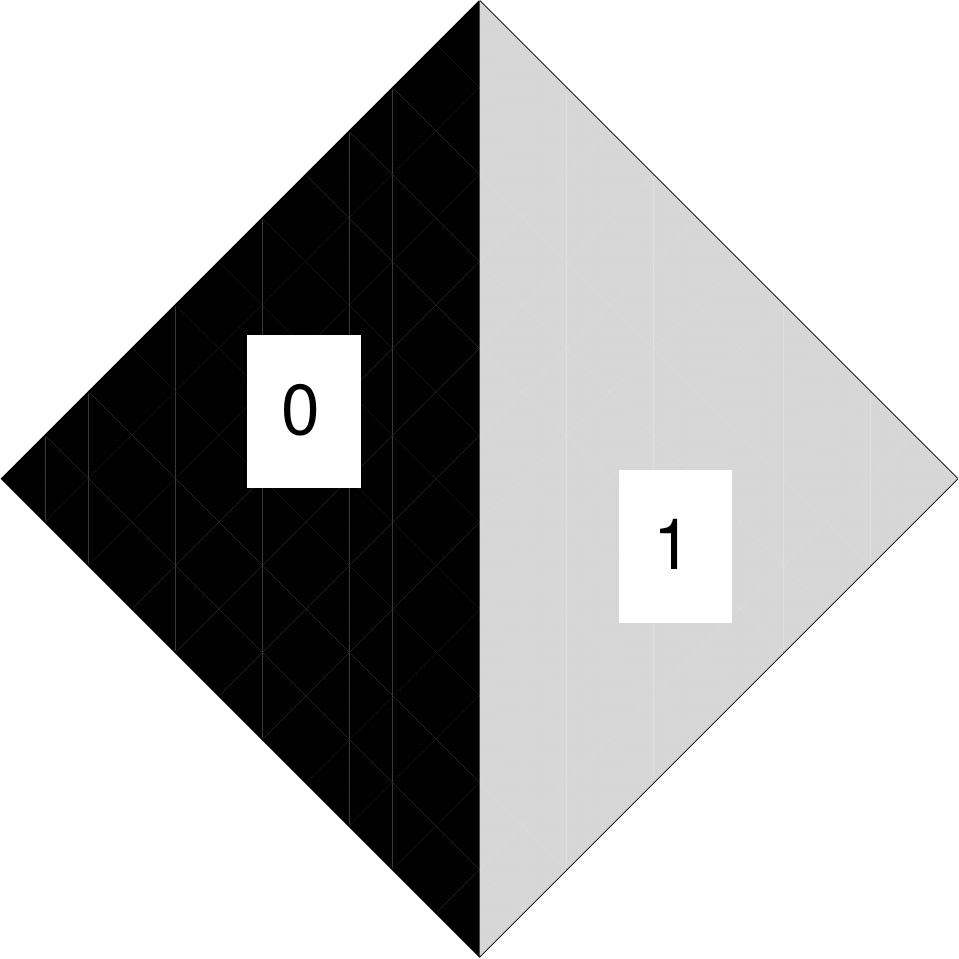}}
	\quad
	\includegraphics[height=50mm]{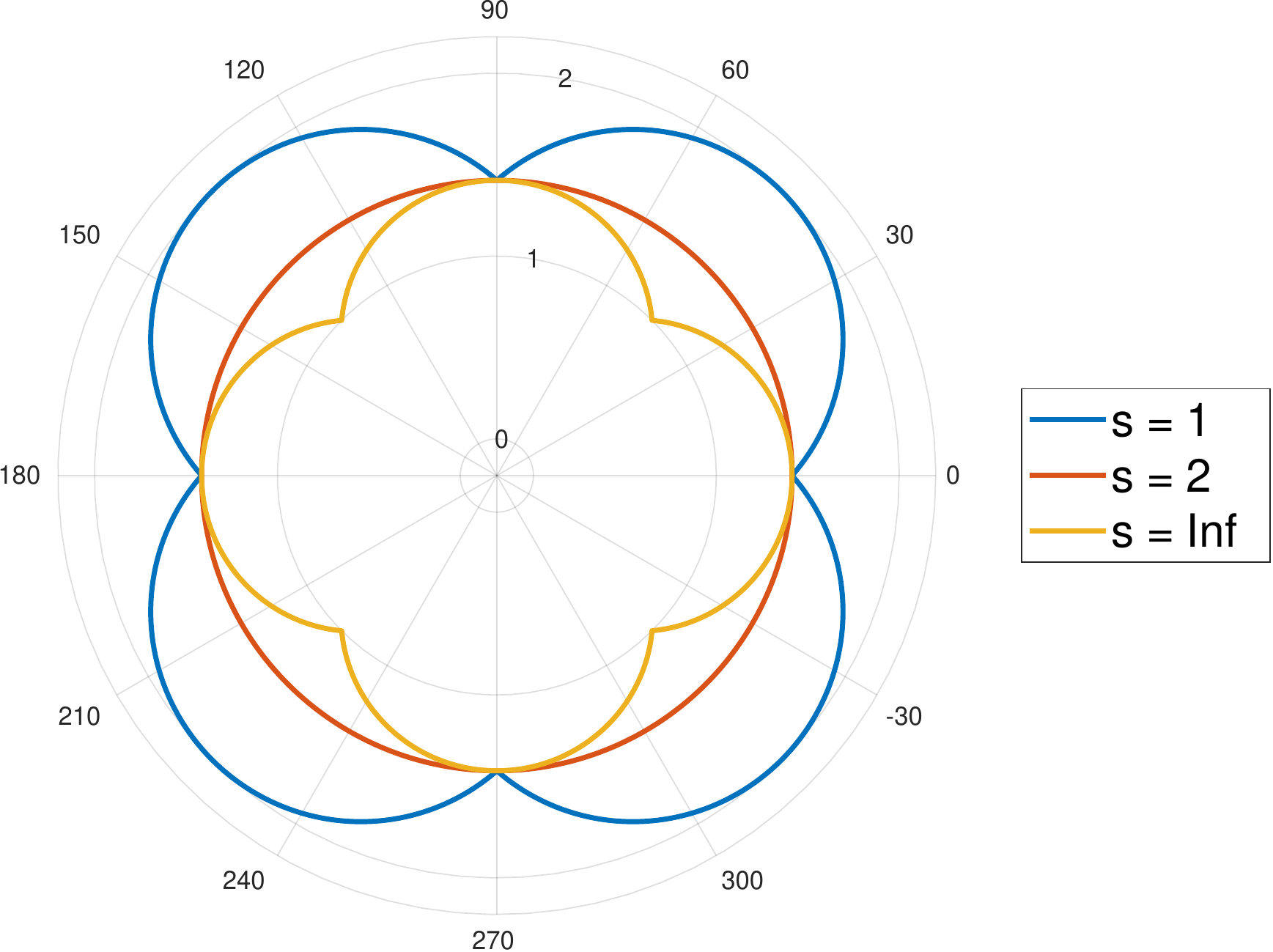}
\caption{A $\DG{0}(\Omega)$ function $u$ with values~0 and 1 on two triangles forming the unit square $\Omega$ (left), and the value of the associated TV-seminorm $\abs{u}_{TV(\Omega)} = \abs{u}_{DTV(\Omega)}$ as a function of the rotation angle of the mesh.}
	\label{fig:illustration_of_anisotropy_DG0}
\end{figure*}

It has been observed in \cite{Condat2017} that \eqq{the rigorous definition of the TV for discrete images has received little attention.}
In this paper we propose and analyze a discrete analogue of \eqref{eq:continuous_TV} for functions $u$ belonging to a space $\DG{r}(\Omega)$ or $\CG{r}(\Omega)$ of globally discontinuous or continuous finite element functions of polynomial degree\footnote{It will become clear in \cref{sec:Discrete_total_variation} why the discussion is restricted to polynomial degrees at most~4. Although this should be sufficient for most practical purposes, we briefly discuss extensions in \cref{sec:Conclusion_outlook}.} $0 \le r \le 4$ on a geometrically conforming, simplicial triangulation of $\Omega$, consisting of triangles $T$ and interior edges $E$.\footnote{\added{While we mainly discuss the case of $\Omega \subset \R^2$, an extension to 3D is detailed in \cref{subsec:3D}.}}

In this case, it is not hard to see that the TV-seminorm \eqref{eq:continuous_TV} can be evaluated as
\begin{equation}
	\label{eq:continuous_TV_for_DG}
	\abs{u}_{TV(\Omega)} 
	=
	\sum_T \int_T \abs{\nabla u}_s \, \dx
	+
	\sum_E \int_E \bigabs{\vjump{u}}_s \, \ds,
\end{equation}
where $\vjump{u}$ denotes the vector-valued jump of a function in normal direction across an interior edge of the triangulation.

It is intuitively clear that when $u$ is confined to a finite element space such as $\DG{r}(\Omega)$ or $\CG{r}(\Omega)$, then it ought to be sufficient to consider the supremum in \eqref{eq:continuous_TV} over all vector fields $\bp$ from an appropriate finite dimensional space as well.
Indeed, we show that this is the case, provided that the TV-seminorm \eqref{eq:continuous_TV_for_DG} is replaced by its discrete analogue
\begin{multline}
	\label{eq:discrete_TV_for_DG}
	\ifthenelse{\boolean{ispreprint}}{\hfill}{}
	\abs{u}_{DTV(\Omega)} 
	\ifthenelse{\boolean{ispreprint}}{}{\\}
	\coloneqq
	\sum_T \int_T \II_T \big\{ \abs{\nabla u}_s \big\} \, \dx
	+
	\sum_E \int_E \II_E \big\{ \bigabs{\vjump{u}}_s \big\} \, \ds,
	\ifthenelse{\boolean{ispreprint}}{\hfill}{}
\end{multline}
which we term the \emph{discrete TV-seminorm}.
Here $\II_T$ and $\II_E$ are local interpolation operators into the polynomial spaces $\PP_{r-1}(T)$ and $\PP_r(E)$, respectively.
Therefore, \eqref{eq:discrete_TV_for_DG} amounts to the application of a nodal quadrature formula for the integrals appearing in \eqref{eq:continuous_TV_for_DG}.
We emphasize that both \eqref{eq:continuous_TV_for_DG} and \eqref{eq:discrete_TV_for_DG} are isotropic when $s = 2$, i.e., invariant w.r.t.\ rotations of the coordinate system.
In the lowest-order case ($r = 0$) of piecewise constant functions, the first sum in \eqref{eq:discrete_TV_for_DG} is zero and only edge contributions appear.
Moreover, in this case \eqref{eq:continuous_TV_for_DG} and \eqref{eq:discrete_TV_for_DG} coincide since $\vjump{u}$ is constant on edges.
In general, we will show that the difference between \eqref{eq:continuous_TV_for_DG} and \eqref{eq:discrete_TV_for_DG} is of the order of the mesh size, see \cref{prop:error_estimate_DTV}.

Using \eqref{eq:discrete_TV_for_DG} in place of \eqref{eq:continuous_TV_for_DG} in optimization problems in imaging offers a number of significant advantages.
Specifically, we will show in \cref{theorem:dual_representation_of_DTV} that \eqref{eq:discrete_TV_for_DG} has a discrete dual representation
\begin{align}
	\abs{u}_{DTV(\Omega)} 
	=
	\max \Bigg\{ \int_\Omega u \div \bp \, \dx: \bp \in \RT{r+1}^0(\Omega) \Bigg.
	\phantom{\Bigg\}}
	\notag
	\ifthenelse{\boolean{ispreprint}}{}{\\[-2ex]}
	\Bigg.
	\text{s.t.\ a number of simple constraints} 
	\Bigg\}
	\label{eq:discrete_TV_abstract}
	\raisetag{8mm}
\end{align}
for $u \in \DG{r}(\Omega)$, where $\RT{r+1}(\Omega)$ denotes the space of Raviart--Thomas finite element functions of order $r+1$, and $\RT{r+1}^0(\Omega)$ is the subspace defined by $\bp \cdot \bn = 0$ (where $\bn$ is the outer normal of unit Euclidean length) on the boundary of $\Omega$.
In the lowest-order case $r = 0$ in particular, one obtains
\begin{multline}
	\label{eq:discrete_TV_r=0}
	\abs{u}_{DTV(\Omega)} 
	=
	\max \Bigg\{ \int_\Omega \! u \div \bp \, \dx: \bp \in \RT{1}^0(\Omega) \Bigg., 
	\\[-2ex]
	\Bigg. \int_E \abs{\bp \cdot \bn_E} \, \ds \le \abs{E} \, \abs{\bn_E}_s \text{ on interior edges} \Bigg\}
	.
\end{multline}
Here $\bn_E$ denotes a normal vector of arbitrary orientation and unit Euclidean length, i.e., $\abs{\bn_E}_2 = 1$, on an interior edge $E$, and $\abs{E}$ denotes the (Euclidean) edge length.
Since the expressions $\int_E \abs{\bp \cdot \bn_E} \, \ds$ are exactly the degrees of freedom typically used to define the basis in $\RT{1}(\Omega)$, the constraints in \eqref{eq:discrete_TV_r=0} are in fact simple \emph{bound constraints on the coefficient vector} of $\bp$.
For comparison, the pointwise restrictions $\abs{\bp}_{s^*} \le 1$ appearing in \eqref{eq:continuous_TV} are nonlinear unless $s^* \in \{1,\infty\}$.
For the case of higher-order finite elements, i.e., $1 \le r \le 4$, further constraints in \eqref{eq:discrete_TV_abstract} impose an upper bound on the $\abs{\,\cdot\,}_{s^*}$-norm of \emph{pairs of coefficients} of $\bp$, see \cref{theorem:dual_representation_of_DTV}.
Consequently, these constraints are likewise linear in the important special case $s = 1$.
In any case, each coefficient of $\bp$ is constrained only once.

As a consequence of \eqref{eq:discrete_TV_abstract}, we establish that optimization problems utilizing the discrete TV-seminorm \eqref{eq:discrete_TV_for_DG} as a regularizer possess a discrete dual problem with very simple constraints.
This applies, in particular, to the famous TV-$L^2$ and TV-$L^1$ models; see \cite{RudinOsherFatemi1992} and \cite{Nikolova2004:1,EsedogluOsher2004,ChanEsedoglu2005}, respectively.
The structure of the primal and dual problems is in turn essential for the efficient implementation of appropriate solution algorithms.
As one of the main contributions of this paper, we are able to show that a variety of popular algorithms for TV-$L^2$ and TV-$L^1$, originally developed in the context of finite difference discretizations on Cartesian grids, apply with little or no changes to discretizations with low or higher-order finite elements. 
\ifthenelse{\boolean{ispreprint}}%
	{Specifically, we consider the split Bregman algorithm \cite{GoldsteinOsher2009}, the primal-dual method of \cite{ChambollePock2011}, Chambolle's projection method \cite{Chambolle2004}, a primal-dual active set method similar to \cite{HintermuellerKunisch2004:2} for TV-$L^2$ for denoising and inpainting problems, as well as the primal-dual method and the ADMM of \cite{TaoYang2009:1_preprint} for TV-$L^1$.
	}%
	{Specifically, we consider the split Bregman algorithm \cite{GoldsteinOsher2009} \added{and} the primal-dual method of \added{Chambolle and Pock} \cite{ChambollePock2011} \added{for TV-$L^2$ for denoising and inpainting problems.} 
		\added{We mention that Chambolle's projection method \cite{Chambolle2004} and a primal-dual active set method similar to \cite{HintermuellerKunisch2004:2} can also be considered; we refer the reader to the extended preprint \cite{HerrmannHerzogSchmidtVidalNunezWachsmuth2018:1_preprint} for details.}
		\added{Moreover, we discuss the primal-dual method \cite{ChambollePock2011} as well as the ADMM of \cite{TaoYang2009:1_preprint} for TV-$L^1$.}
	}
A `Huberized' version of \eqref{eq:discrete_TV_for_DG} can also be considered with minor modifications to the algorithms.

There are multiple motivations to study finite element discretizations of the TV-seminorm\added{, in imaging and beyond}.
First, finite element discretizations lend themselves in applications whenever the data is not represented on a Cartesian grid.
\added{While we focus in this paper mainly on the mathematical theory on triangular grids, we mention, for instance, that honeycombed octagonal CCD sensor layouts are in use in consumer cameras, e.g., the Fujifilm SuperCCD sensor.
	Furthermore, non-rectangular sub-pixel configurations appear to be promising for spatially varying exposure (SVE) sensors for high-dynamic-range (HDR) imaging, see \cite{LiBaiLinYu2016}, and super-resolution applications, see \cite{Ben-EzraLinWilburnZhang2011,SasaoHiuraSato2013,YueShenLiYuanZhangZhang2016}.
	Image processing problems on non-regular pixel layouts have been previously considered in~\cite{ColemanScotneyGardiner2016,Jensen2015,SugathanScariaJames2015,KnaupSteckmannBockenbachKachelriess2007}.
	Further applications of higher-order discretizations in imaging arise when the image data to be reconstructed is not a priori quantized into piecewise constant pixel values.
}

Second, \eqref{eq:continuous_TV} is popular as a regularizer in inverse coefficient problems for partial differential equations; see for instance \cite{ChanTai2004,BachmayrBurger2009,ClasonKruseKunisch2017}.
In this situation, a discretization by finite elements of both the state and the unknown coefficient is often the natural choice, in particular on non-trivial geometries.
Third, finite element discretizations generalize easily to higher-order simply by increasing the poly\added{n}omial degree.
It is well known that higher-order discretizations can outperform mesh refinement approaches when the function to be approximated is sufficiently smooth.
Finally, we anticipate that our approach can be extended to total \emph{generalized} variation (TGV) introduced in \cite{BrediesKunischPock2010} as well, \added{and imaging problems on surfaces as in \cite{LopezPerez2006,HerrmannHerzogKroenerSchmidtVidalNunez2017:1}}, although this is not the subject of the present paper.

The vast majority of all publications to date dealing with the TV-seminorm use a (lowest order) finite difference approximation of \eqref{eq:continuous_TV} on Cartesian grids, where the divergence is approximated by one-sided differences.
We are aware of only a few contributions including \cite{FengProhl2003,ElliottSmitheman2009,WuZhangDuanTai2012,Bartels2012,BartelsNochettoSalgado2014,AlkaemperLanger2017,BerkelsEfflandRumpf2017,ClasonKruseKunisch2017} using lowest-order ($r = 1$) \emph{continuous} finite elements, i.e., $u \in \CG{1}(\Omega)$.
In this case the edge jump contributions in \eqref{eq:continuous_TV_for_DG} and \eqref{eq:discrete_TV_for_DG} vanish, and since $\nabla u \in \DG{0}(\Omega)$ holds, formulas \eqref{eq:continuous_TV_for_DG} and \eqref{eq:discrete_TV_for_DG} coincide.
Moreover, the case $u \in \DG{0}(\Omega)$ on uniform, rectangular grids\added{, i.e., pixel images,} is discussed in \cite{StammWihler2015,LeeParkPark2018_preprint}.
\added{Recently, \cite{ChambollePock2018:1_preprint} proposed a different discrete approximation of the total variation over the Crouzeix-Raviart finite element space for the image data $u$, which lies in between $\DG{1}(\Omega)$ and $\CG{1}(\Omega)$.}

To the best of our knowledge, the definition of the discrete TV-seminorm \eqref{eq:discrete_TV_for_DG} as well as role of \added{the} Raviart--Thomas finite element space to establish the dual representation \eqref{eq:discrete_TV_abstract} are novel contributions of the present work.

This paper is structured as follows.
We collect some background material on finite elements in \cref{sec:Finite_Element_Spaces}.
In \cref{sec:Discrete_total_variation} we establish the dual representation \eqref{eq:discrete_TV_for_DG} of the discrete TV-seminorm \eqref{eq:discrete_TV_abstract}.
We also derive an estimate of the error between \eqref{eq:discrete_TV_for_DG} and \eqref{eq:continuous_TV_for_DG}.
We present discrete TV-$L^2$ and TV-$L^1$ models along with their duals in \cref{sec:Discrete_dual_problems}.
In \cref{sec:Algorithms} we show that \ifthenelse{\boolean{ispreprint}}{a variety of}{\added{two}} well known algorithms for TV-$L^2$ image denoising \added{and inpainting} can be applied in our (possibly higher-order) finite element setting with little or no changes compared to their classical counterparts in the \added{Cartesian} finite difference domain.
Further implementation details in the finite element framework \fenics\ are given in \cref{sec:Implementation_Details} and numerical results for TV-$L^2$ \added{denoising and inpainting} are presented in \cref{sec:Numerical_results_DTV-L2}.
In \cref{sec:DTV-L1} we briefly also consider two methods for the TV-$L^1$ case.
In \cref{sec:Extensions} we comment on extensions such as Huber regularized variants of TV-$L^2$ and TV-$L^1$\ifthenelse{\boolean{ispreprint}}{, as well as on the simplifications that apply when images belong to globally \emph{continuous} finite element spaces $\CG{r}(\Omega)$.}{\deleted{, as well as on the simplifications that apply when images belong to globally \emph{continuous} finite element spaces $\CG{r}(\Omega)$}.}
\added{Moreover, an extension to $\Omega \subset \R^3$ is discussed.}
We conclude with an outlook in \cref{sec:Conclusion_outlook}.

\subsection*{Notation}

Let $\Omega \subset \R^2$ be a bounded domain with polygonal boundary.
We denote by $L^2(\Omega)$ and $H^1(\Omega)$ the usual Lebesgue and Sobolev spaces.
$H^1_0(\Omega)$ is the subspace of $H^1(\Omega)$ of functions having zero trace on the boundary $\partial \Omega$.
The vector valued counterparts of these spaces as well as all vector valued functions will be written in bold-face notation.
Moreover, we define
\begin{equation*}
	\bH(\div;\Omega) \coloneqq \big\{ \bp \in \bL^2(\Omega): \div \bp \in L^2(\Omega) \big\}
\end{equation*}
and $\bH_0(\div;\Omega)$ is the subspace of functions having zero normal trace on the boundary, i.e., $\bp \cdot \bn = 0$.

\section{Finite Element Spaces}
\label{sec:Finite_Element_Spaces}

Suppose that $\Omega$ is triangulated by a geometrically conforming mesh (no hanging nodes) consisting of non-degenerate triangular cells $T$ and interior edges $E$.
Recall that on each interior edge, $\bn_E$ denotes the unit normal vector (of arbitrary but fixed orientation).
Throughout, $r \ge 0$ denotes the degree of certain polynomials.

\subsection*{Lagrangian Finite Elements}

Let $\PP_r(T)$ denote the space of scalar, bivariate polynomials on $T$ with total maximal degree $r$.
The dimension of $\PP_r(T)$ is $(r+1) \, (r+2) / 2$.
Let $\{\LagrangeBasisTr{k}\}$ denote the standard nodal basis of $\PP_r(T)$ with associated Lagrange nodes $\{\LagrangeNodesTr{k}\}$, $k=1,\ldots, (r+1)\,(r+2)/2$.
In other words, each $\LagrangeBasisTr{k}$ is a function in $\PP_r(T)$ satisfying $\LagrangeBasisTr{k}(\LagrangeNodesTr{k'}) = \delta_{kk'}$, see \cref{fig:illustration_of_DG_basis_functions} in \cref{sec:Examples_of_FE_basis_functions}.
We denote by
\begin{align}
	\label{eq:DGr}
	\DG{r}(\Omega) & \coloneqq \big\{ u \in L^2(\Omega): \restr{u}{T} \in \PP_r(T) \big\}, \quad r \ge 0,
	\\
	\label{eq:CGr}
	\CG{r}(\Omega) & \coloneqq \big\{ u \in C(\Omega): \restr{u}{T} \in \PP_r(T) \big\}, \quad r \ge 1,
\end{align}
the standard finite element spaces of globally discontinuous ($L^2$-conforming) or continuous ($H^1$-conforming) piecewise polynomials of degree $r$.
A finite element function $u \in \DG{r}(\Omega)$ or $\CG{r}(\Omega)$, restricted to $T$, is represented by its coefficient vector w.r.t.\ the basis $\{\LagrangeBasisTr{k}\}$, which is simply given by point evaluations.
We use the notation
\begin{equation*}
	u_{T,k}
	= 
	\restr{u}{T}(\LagrangeNodesTr{k})
\end{equation*}
to denote the elements of the coefficient vector of a function $u \in \DG{r}(\Omega)$ or $\CG{r}(\Omega)$.

Frequently we will also work with the space $\PP_{r-1}(T)$, whose standard nodal basis and Lagrange nodes we denote by $\{\LagrangeBasisTrmone{i}\}$ and $\{\LagrangeNodesTrmone{i}\}$, $i=1,\ldots, r\,(r+1)/2$.
The interpolation operator into this space (used in the definition \eqref{eq:discrete_TV_for_DG} of $\abs{u}_{DTV(\Omega)}$) is defined by 
\begin{equation*}
	\II_T \{ v \} \coloneqq \sum_{i=1}^{\mrep{r \, (r+1)/2}{r+1}} v(\LagrangeNodesTrmone{i}) \, \LagrangeBasisTrmone{i}.
\end{equation*}
Similarly, $\PP_r(E)$ denotes the space of univariate scalar polynomials on $E$ of maximal degree $r$, which has dimension $r+1$.
Let $\{\LagrangeBasisEr{j}\}$ denote the standard nodal basis of $\PP_r(E)$ with associated Lagrange nodes $\{\LagrangeNodesEr{j}\}$, $j=1,\ldots, r+1$, see \cref{fig:illustration_of_Pr_basis_functions} in \cref{sec:Examples_of_FE_basis_functions}.
The associated interpolation operator becomes 
\begin{equation*}
	\II_E \{ v \} \coloneqq \sum_{j=1}^{r+1} v(\LagrangeNodesEr{j}) \, \LagrangeBasisEr{j}.
\end{equation*}

Finally, we address the definition of the jump of a $\DG{r}(\Omega)$ function across an interior edge $E$ connecting two cells $T_1$ and $T_2$ with their respective outer normals $\bn_1$ and $\bn_2 = - \bn_1$ of unit length.
We recall that the edge normal $\bn_E$ coincides either with $\bn_1$ or $\bn_2$ and we distinguish between the
\begin{subequations}
	\label{eq:definition_of_jump}
	\begin{alignat}{2}
	\label{eq:definition_of_vector_jump}
	\text{vector-valued jump} & & \quad
	\vjump{u} 
	& 
	= 
	\restr{u}{T_1} \bn_1 + \restr{u}{T_2} \bn_2
	\\
	\label{eq:definition_of_scalar_jump}
	\text{and scalar jump} & & \quad
	\jump{u} 
	& 
	= 
	\vjump{u} \cdot \bn_E
	.
	\end{alignat}
\end{subequations}
Notice that the sign of $\jump{u}$ depends on the orientation of $\bn_E$, while $\vjump{u}$ does not.
For instance when $\bn_E = \bn_1$, then $\jump{u} \coloneqq \restr{u}{T_1} - \restr{u}{T_2}$ holds.
Moreover, we point out that $\vjump{u} = \jump{u} \, \bn_E$ holds.

\subsection*{Raviart--Thomas Finite Elements}

For $r \ge 0$, we denote by
\begin{multline}
	\label{eq:RTr+1}
	\ifthenelse{\boolean{ispreprint}}{\hfill}{}
	\RT{r+1}(\Omega) 
	\ifthenelse{\boolean{ispreprint}}{}{\\}
	\coloneqq 
	\big\{ \bp \in \bH(\div;\Omega): \restr{\bp}{T} \in \PP_r(T)^2 + \bx \, \PP_r(T) \big\}
	\ifthenelse{\boolean{ispreprint}}{\hfill}{}
\end{multline}
the ($\bH(\div;\Omega)$-conforming) Raviart--Thomas finite element space of order $r+1$.\footnote{Notice that while denote the lowest-order $\RT{}$ space by $\RT{1}$, some authors use $\RT{0}$ for this purpose.}
Moreover, $\RT{r+1}^0(\Omega)$ is the subspace of functions satisfying $\bp \cdot \bn = 0$ along the boundary of $\Omega$.
The dimension of the polynomial space on each cell is $(r+1) \, (r+3)$.
Notice that several choices of local bases for $\RT{r+1}(T)$ are described in the literature, based on either point evaluations or integral moments as degrees of freedom (dofs).
Clearly, a change of the basis does not alter the finite element space but only the representation of its members, which can be identified with their coefficient vectors w.r.t.\ a particular basis.
For the purpose of this paper, it is convenient to work with the following global degrees of freedom of integral type for $\bp \in \RT{r+1}(\Omega)$; see \cite[Ch.~3.4.1]{LoggMardalWells2012:1}:
\begin{subequations}
	\label{eq:RTr+1_dofs}
	\begin{align}
		\label{eq:RTr+1_dofs_T}
		\RTDofsT{i}(\bp)
		&
		\coloneqq
		\int_T \LagrangeBasisTrmone{i} \, \bp \, \dx, 
		\quad i = 1, \ldots, \textstyle r\,(r+1)/2,
		\\
		\label{eq:RTr+1_dofs_E}
		\RTDofsE{j}(\bp)
		&
		\coloneqq
		\int_E \LagrangeBasisEr{j} \,(\bp \cdot \bn_E) \, \ds, 
		\quad j = 1, \ldots, r+1.
	\end{align}
\end{subequations}
We will refer to \eqref{eq:RTr+1_dofs_T} as triangle-based, or interior, dofs and to \eqref{eq:RTr+1_dofs_E} as edge-based dofs.
Notice that while the edge-based dofs are scalar, the triangle-based dofs have values in $\R^2$ for notational convenience.
The global basis functions for the space $\RT{r+1}(\Omega)$ are denoted by $\RTBasisT{i}$ and $\RTBasisE{j}$, respectively.
Notice that $\RTBasisT{i}$ is $\R^{2 \times 2}$-valued.
As is the case for all finite element spaces, any dof applied to any of the basis functions evaluates to zero except
\begin{equation}
	\label{eq:RTr+1_duality_of_basis}
	\RTDofsT{i}(\RTBasisT{i'}) = \begin{psmallmatrix} 1 & 0 \\ 0 & 1 \end{psmallmatrix} \, \delta_{ii'} 
	\quad
	\text{and}
	\quad
	\RTDofsE{j}(\RTBasisE{j'}) = \delta_{jj'}.
\end{equation}
Some basis functions of type $\RTBasisE{j}$ are shown in \cref{fig:illustration_of_RT_basis_functions} in \cref{sec:Examples_of_FE_basis_functions}.
Let us emphasize that for any function $\bp \in \RT{r+1}^0(\Omega)$, the dof values \eqref{eq:RTr+1_dofs} are precisely the coefficients of $\bp$ w.r.t.\ the basis, i.e.,
\begin{equation}
	\label{eq:representation_of_p_in_RTr+1}
	\bp 
	= 
	\sum_T \sum_{i=1}^{\mrep{r \, (r+1)/2}{r+1}} \RTDofsT{i}(\bp) \, \RTBasisT{i} 
	+ 
	\sum_E \sum_{j=1}^{r+1} \RTDofsE{j}(\bp) \, \RTBasisE{j}.
\end{equation}

\subsection*{Index Conventions}

In order to reduce the notational overhead, we are going to associate specific ranges for any occurence of the indices $i$, $j$ and $k$ in the sequel:
\begin{equation*}
	\begin{aligned}
		&
		i \in \{1, \ldots, r \, (r+1)/2 \}
		\text{ as in the basis functions}
		\\
		&
		\qquad
		\text{$\LagrangeBasisTrmone{i}$ of $\PP_{r-1}(T)$ and dofs $\RTDofsT{i}$ in $\RT{r+1}(\Omega)$}
		,
		\\
		&
		j \in \{1, \ldots, r+1 \}
		\text{ as in the basis functions}
		\\
		&
		\qquad
		\text{$\LagrangeBasisEr{j}$ of $\PP_r(E)$ and dofs of $\RTDofsE{j}$ in $\RT{r+1}(\Omega)$}
		,
		\\
		&
		k \in \{1, \ldots, (r+1)(r+2)/2 \}
		\text{ as in the basis functions}
		\\
		&
		\qquad 
		\text{$\LagrangeBasisTr{k}$ of $\PP_r(T)$}
		.
	\end{aligned}
\end{equation*}
For instance, \eqref{eq:representation_of_p_in_RTr+1} will simply be written as 
\begin{equation*}
	\bp = \sum_{T,i} \RTDofsT{i}(\bp) \, \RTBasisT{i} + \sum_{E,j} \RTDofsE{j}(\bp) \, \RTBasisE{j} 
\end{equation*}
in what follows.
For convenience, we summarize the notation for the degrees of freedom and basis functions needed throughout the paper in \cref{tab:spaces_and_basis_functions}.

\begin{table*}[htbp]
	\centering
	\begin{tabular}{@{}lllll@{}}
		\toprule
		FE space           & local dimension & dofs                                       & basis functions               & global dimension \\
		\midrule
		$\CG{r}(\Omega)$   & $(r+1)(r+2)/2$  & eval.\ in $\LagrangeNodesTr{k}$            & $\{\LagrangeBasisTr{k}\}$     & $N_T \, (r-2)^+ (r-1)/2$
		\\
		($r \ge 1$)        &                 &                                            &                               & ${}+ N_E \, (r-1)^+ + N_V$
		\\
		\midrule
		$\DG{r}(\Omega)$   & $(r+1)(r+2)/2$  & eval.\ in $\LagrangeNodesTr{k}$            & $\{\LagrangeBasisTr{k}\}$     & $N_T \, (r+1)(r+2)/2$ 
		\\
		\midrule
		$\DG{r-1}(\Omega)$ & $r\,(r+1)/2$    & eval.\ in $\LagrangeNodesTrmone{i}$        & $\{\LagrangeBasisTrmone{i}\}$ & $N_T \, r\,(r+1)/2$
		\\
		\midrule
		$\DG{r}(\cup E)$   & $r+1$           & eval.\ in $\LagrangeNodesEr{j}$            & $\{\LagrangeBasisEr{j}\}$     & $N_E \, (r+1)$
		\\
		\midrule
		$\RT{r+1}^0(\Omega)$ & $(r+1)(r+3)$  & $\RTDofsT{i}$, see \eqref{eq:RTr+1_dofs_T} & $\{\RTBasisT{i}\}$            & $N_T \, r\,(r+1)$ 
		\\
		                   &                 & $\RTDofsE{j}$, see \eqref{eq:RTr+1_dofs_E} & $\{\RTBasisE{j}\}$            & ${}+N_E \, (r+1)$
		\\
		\midrule 
		\bottomrule \\
	\end{tabular}
	\caption{Finite element spaces, their degrees of freedom and corresponding bases. Here $N_T$, $N_E$ and $N_V$ denote the number of triangles, interior edges and vertices in the triangular mesh. A term like $(r-a)^+$ should be understood as $\max\{r-a,0\}$.}
	\label{tab:spaces_and_basis_functions}
\end{table*}

\section{Properties of the Discrete Total Variation}
\label{sec:Discrete_total_variation}

In this section we investigate the properties of the discrete total variation seminorm
\begin{multline*}
	\ifthenelse{\boolean{ispreprint}}{\hfill}{}
	\abs{u}_{DTV(\Omega)} 
	\ifthenelse{\boolean{ispreprint}}{}{\\}
	\coloneqq
	\sum_T \int_T \II_T \big\{ \abs{\nabla u}_s \big\} \, \dx
	+
	\sum_E \int_E \II_E \big\{ \bigabs{\vjump{u}}_s \big\} \, \ds
	\ifthenelse{\boolean{ispreprint}}{\hfill}{}
\end{multline*}
for functions $u \in \DG{r}(\Omega)$.
Recall that $\II_T$ and $\II_E$ are local interpolation operators into the polynomial spaces $\PP_{r-1}(T)$ and $\PP_r(E)$, respectively.
In terms of the Lagrangian bases $\{\LagrangeBasisTrmone{i}\}$ and $\{\LagrangeBasisEr{j}\}$ of these spaces, we have
\begin{subequations}
	\label{eq:interpolation_terms_in_DTV}
	\begin{align}
		\int_T \II_T \big\{ \abs{\nabla u}_s \big\} \, \dx
		&
		= 
		\sum_{i=1}^{\mrep{r \, (r+1)/2}{r+1}} \; \bigabs{\nabla u(\LagrangeNodesTrmone{i})}_s \, c_{T,i}
		,
		\label{eq:interpolation_terms_in_DTV_T}
		\\
		\int_E \II_E \big\{ \bigabs{\vjump{u}}_s \big\} \, \ds
		&
		= 
		\sum_{j=1}^{r+1} \; \bigabs{\jump{u}(\LagrangeNodesEr{j})} \, \abs{\bn_E}_s \, c_{E,j},
		\label{eq:interpolation_terms_in_DTV_E}
	\end{align}
\end{subequations}
where the weights are given by
\begin{equation}
	\label{eq:dual_representation_of_DTV_upper_bounds}
	c_{T,i} \coloneqq \int_T \LagrangeBasisTrmone{i} \, \dx
	\quad
	\text{and}
	\quad
	c_{E,j} \coloneqq \int_E \LagrangeBasisEr{j} \, \ds.
\end{equation}
\Cref{fig:illustration_of_edge_contributions_TV_vs_DTV} provides an illustration of the difference between the contributions
\begin{align*}
\int_E \bigabs{\vjump{u}}_s \, \ds \text{ and } \int_E \II_E \big\{ \bigabs{\vjump{u}}_s \big\} \, \ds
\end{align*}
to $\abs{u}_{TV(\Omega)}$ and $\abs{u}_{DTV(\Omega)}$.

\begin{figure*}[htp]
	\centering
	\includegraphics[height=45mm,width=50mm]{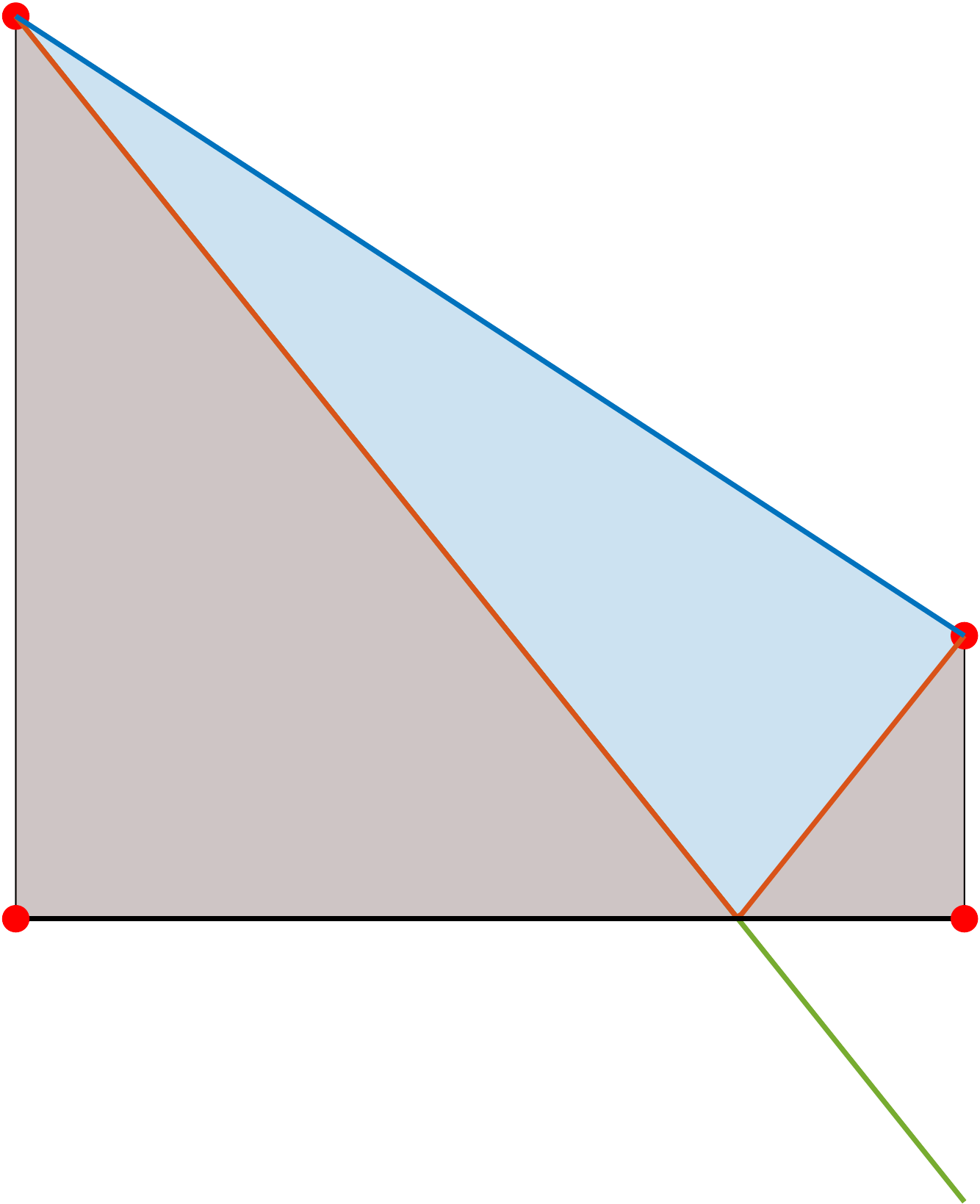}
	\quad
	\includegraphics[height=45mm,width=50mm]{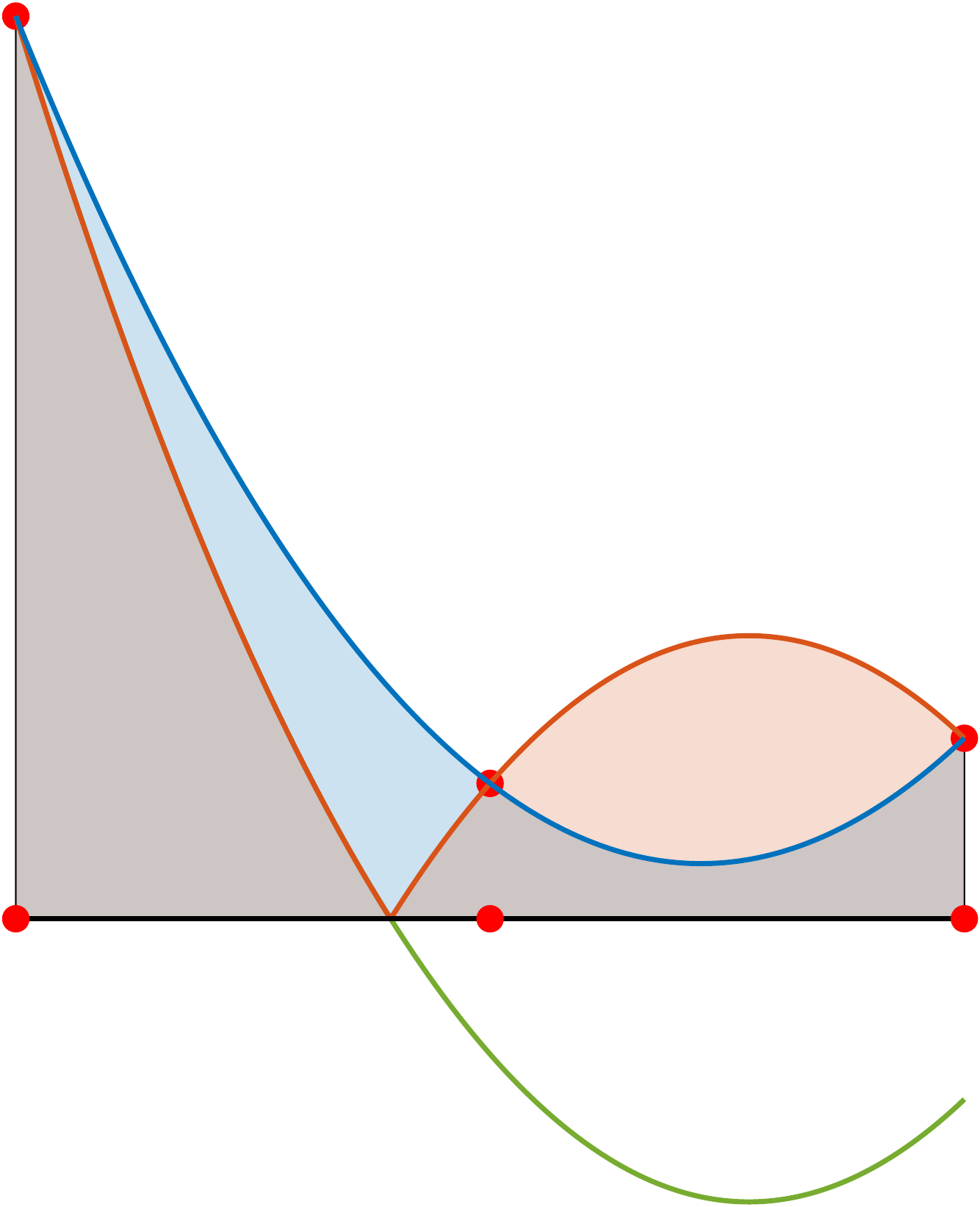}
	\caption{Illustration of typical edge-jump contributions to $\abs{u}_{TV(\Omega)}$ and to $\abs{u}_{DTV(\Omega)}$. The green and red curves show $\jump{u}$ and $\abs{\jump{u}}$, respectively, and the blue curve shows $\II_E \big\{ \abs{\jump{u}} \big\}$ for polynomial degrees $r = 1$ (left) and $r = 2$ (right). The \added{left} picture also confirms $\abs{u}_{TV(\Omega)} \le \abs{u}_{DTV(\Omega)}$ when $r = 1$, see \cref{cor:error_estimate_DTV}, while $\abs{u}_{TV(\Omega)}$ may be larger or smaller than $\abs{u}_{DTV(\Omega)}$ when $r \in \{2,3,4\}$.}
	\label{fig:illustration_of_edge_contributions_TV_vs_DTV}
\end{figure*}

In virtue of the fact that $\restr{\nabla u}{T} \in \PP_{r-1}(T)^2$ and $\jump{u} \in \PP_r(E)$, it is clear that $\abs{\,\cdot\,}_{DTV(\Omega)}$ is indeed a seminorm on $\DG{r}(\Omega)$, provided that all weights $c_{T,i}$ and $c_{E,j}$ are non-negative.
The following lemma shows that this is the case for polynomial degrees $0 \le r \le 4$.

\begin{lemma}[Lagrange basis functions with positive integrals]
	\label{lemma:basis_functions_positive_integrals}
	\begin{enumerate}[label=$(\alph*)$]
		\item 
			\label{item:closed_Newton-Cotes_T}
			Let $T \subset \R^2$ be a triangle and $1 \le r \le 4$.
			Then $c_{T,i} \ge 0$ holds for all $i = 1, \ldots, r \, (r+1)/2$.
			When $r \neq 3$, then all $c_{T,i} > 0$.
		\item 
			\label{item:closed_Newton-Cotes_E}
			Let $E \subset \R^2$ be an edge and $0 \le r \le 7$.
			Then $c_{E,j} > 0$ holds for all $j = 1, \ldots, r+1$.
	\end{enumerate}
\end{lemma}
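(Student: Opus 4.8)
The plan is to recognise that the quantities $c_{E,j}$ and $c_{T,i}$ are exactly the weights of closed Newton--Cotes quadrature rules, and then to invoke (and, at the boundary of the admissible range, verify) the classical positivity properties of such rules. Indeed, since the Lagrange nodes $\{\LagrangeNodesEr{j}\}$ of $\PP_r(E)$ are the $r+1$ equispaced points on $E$ (endpoints included), the rule $\int_E v \,\ds \approx \sum_j v(\LagrangeNodesEr{j})\, c_{E,j}$ is exact on $\PP_r(E)$ and is therefore the closed Newton--Cotes rule with $r+1$ nodes. Likewise $\{\LagrangeNodesTrmone{i}\}$ is the standard equispaced lattice of $r\,(r+1)/2$ points in $T$, so $\int_T v \,\dx \approx \sum_i v(\LagrangeNodesTrmone{i})\, c_{T,i}$ is the two-dimensional closed Newton--Cotes rule on the triangle, exact on $\PP_{r-1}(T)$.

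First I would reduce both statements to a single reference element. If $F\colon \hat E \to E$ (respectively $F\colon \hat T \to T$) is an affine bijection matching the nodes, then the nodal basis functions pull back as $\LagrangeBasisEr{j} = \hat\phi_j \circ F^{-1}$, since the right-hand side is a polynomial of the same degree taking the value $\delta_{jk}$ at $F(\hat x_k)$. Because the Jacobian of $F$ is constant, this gives $c_{E,j} = (\abs{E}/\abs{\hat E})\,\hat c_j$ and similarly $c_{T,i} = (\abs{T}/\abs{\hat T})\,\hat c_i$ for the reference weights $\hat c$. Thus the sign of every weight is independent of the particular edge or triangle, and it suffices to examine the reference weights.

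For part~\ref{item:closed_Newton-Cotes_E} I would invoke the well-known fact that the weights of the one-dimensional closed Newton--Cotes rule are strictly positive for up to eight nodes, i.e.\ for $r+1 \le 8$, the first negative weight appearing only for the nine-point rule. The case $r = 0$ is the midpoint rule, whose single weight is $\abs{E} > 0$. This yields $c_{E,j} > 0$ for all $0 \le r \le 7$. For part~\ref{item:closed_Newton-Cotes_T} the weights no longer have such a clean closed form, so I would compute the reference weights $\hat c_i = \int_{\hat T}\hat\phi_i \,\dx$ explicitly for each of the four relevant degrees $r \in \{1,2,3,4\}$ (equivalently, the triangular rules of degrees $0,1,2,3$), each being a finite linear-algebra computation. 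For $r \in \{1,2,4\}$ every reference weight is strictly positive. The distinguished case is $r = 3$ (the six-node rule): by the symmetry of the node set, the six weights reduce to a common vertex weight $w_v$ and a common edge-midpoint weight $w_m$; integrating the constant $1$ gives $3(w_v + w_m) = \abs{T}$, and integrating a product of two barycentric coordinates pins down $w_m = \abs{T}/3$, whence $w_v = 0$. This accounts for the non-strict inequality and the exception stated for $r = 3$; the admissible range stops at $r = 4$ because the degree-four rule (the case $r = 5$) already exhibits negative weights.

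The \textbf{main obstacle} is the triangle case. Unlike in one dimension there is no slick structural argument, so positivity must be established rule by rule. The only genuinely delicate point is the behaviour at $r = 3$, where symmetry alone merely collapses the problem to two unknowns, and one must carry out the short integration to see that the vertex weights are \emph{exactly} zero rather than small, so that non-negativity, but not strict positivity, is the sharp assertion. The remaining verifications for $r \in \{1,2,4\}$ are direct and routine.
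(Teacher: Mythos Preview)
Your proposal is correct and follows essentially the same approach as the paper: both recognise that $c_{T,i}$ and $c_{E,j}$ are the weights of the closed Newton--Cotes rules on the triangle and interval, respectively, and then appeal to the known sign pattern of those weights. The paper simply cites tabulated values (Silvester for triangles, Davis--Rabinowitz and Dahlquist--Bj\"orck for intervals), whereas you add the affine reduction to a reference element and a clean symmetry computation for the borderline case $r=3$; this extra detail is welcome and makes the argument more self-contained, but the underlying strategy is the same.
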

\begin{proof}
	Given that the Lagrange points form a unifom lattice on either $T$ or $E$, the values of $c_{T,i}$ and $c_{E,j}$ are precisely the integration weights of the closed Newton--Cotes formulas.
	For triangles, these weights are tabulated, e.g., in \cite[Tab.~I]{Silvester1970} for orders $0 \le r \le 8$, and they confirm \cref{item:closed_Newton-Cotes_T}.
	For edges (intervals), we refer the reader to, e.g., \cite[Ch.~2.5]{DavisRabinowitz1984} or \cite[Ch.~5.1.5]{DahlquistBjoerck2008}, which confirms \cref{item:closed_Newton-Cotes_E}.
	\qed
\end{proof}

We can now prove the precise form of the dual representation \eqref{eq:discrete_TV_abstract} of the discrete TV-seminorm \eqref{eq:discrete_TV_for_DG}.
\begin{theorem}[Dual Representation of $\abs{u}_{DTV(\Omega)}$]
	\label{theorem:dual_representation_of_DTV}
	Suppose $0 \le r \le 4$.
	Then for any $u \in \DG{r}(\Omega)$, the discrete TV-seminorm \eqref{eq:discrete_TV_for_DG} satisfies
	\begin{align}
		\MoveEqLeft
		\abs{u}_{DTV(\Omega)} 
		=
		\sup \Bigg\{ 
			\int_\Omega u \div \bp \, \dx: \bp \in \RT{r+1}^0(\Omega),
			\notag
			\\
			&
			\abs{\RTDofsT{i}(\bp)}_{s^*} \le c_{T,i} \text{ for all $T$, } i = 1, \ldots, r \, (r+1)/2,
			\notag
			\\
			&
			\abs{\RTDofsE{j}(\bp)} \le \abs{\bn_E}_s \, c_{E,j} \text{ for all $E$, } j = 1, \ldots, r+1 
		\Bigg\}
		.
		\label{eq:dual_representation_of_DTV}
	\end{align}
\end{theorem}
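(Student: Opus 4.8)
The plan is to evaluate the objective $\int_\Omega u \div \bp \, \dx$ in closed form and then maximize over the feasible set, exploiting that the Raviart--Thomas degrees of freedom make the constraints decouple. First I would integrate by parts cellwise. Since $u \in \DG{r}(\Omega)$ is a polynomial on each triangle and $\bp \in \RT{r+1}^0(\Omega) \subset \bH(\div;\Omega)$, Green's formula on each $T$ gives $\int_T u \div \bp \, \dx = -\int_T \nabla u \cdot \bp \, \dx + \int_{\partial T} u \, (\bp \cdot \bn) \, \ds$. Summing over all cells, the boundary integrals over $\partial\Omega$ vanish because $\bp \cdot \bn = 0$ there, while on each interior edge the two cell contributions combine: the normal trace $\bp \cdot \bn_E$ is single-valued across $E$ by $\bH(\div)$-conformity, so only the jump of $u$ survives. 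Using $\vjump{u} = \jump{u}\,\bn_E$ and the orientation conventions from \eqref{eq:definition_of_jump}, this produces
\begin{equation*}
	\int_\Omega u \div \bp \, \dx = -\sum_T \int_T \nabla u \cdot \bp \, \dx + \sum_E \int_E \jump{u}\,(\bp \cdot \bn_E) \, \ds.
\end{equation*}

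The second and crucial step is to recognize that both integrands are already polynomials of exactly the degree to which the dofs are tuned. On each $T$ we have $\nabla u|_T \in \PP_{r-1}(T)^2$, so nodal interpolation in the Lagrange points $\LagrangeNodesTrmone{i}$ is \emph{exact}, $\nabla u = \sum_i \nabla u(\LagrangeNodesTrmone{i})\,\LagrangeBasisTrmone{i}$, and inserting this yields $\int_T \nabla u \cdot \bp \, \dx = \sum_i \nabla u(\LagrangeNodesTrmone{i}) \cdot \RTDofsT{i}(\bp)$ by the very definition \eqref{eq:RTr+1_dofs_T} of the triangle dofs. Likewise $\jump{u}|_E \in \PP_r(E)$ expands nodally in the $\LagrangeNodesEr{j}$, and \eqref{eq:RTr+1_dofs_E} gives $\int_E \jump{u}\,(\bp \cdot \bn_E)\,\ds = \sum_j \jump{u}(\LagrangeNodesEr{j})\,\RTDofsE{j}(\bp)$. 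Thus the objective becomes the finite bilinear pairing $-\sum_{T,i} \nabla u(\LagrangeNodesTrmone{i}) \cdot \RTDofsT{i}(\bp) + \sum_{E,j} \jump{u}(\LagrangeNodesEr{j})\,\RTDofsE{j}(\bp)$ between the fixed nodal data of $u$ and the dof vector of $\bp$.

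Finally I would carry out the maximization. By \eqref{eq:representation_of_p_in_RTr+1} the map sending $\bp \in \RT{r+1}^0(\Omega)$ to its dof vector $\big(\RTDofsT{i}(\bp),\RTDofsE{j}(\bp)\big)$ is a bijection, and each constraint in \eqref{eq:dual_representation_of_DTV} restricts a single dof. Hence the feasible set is a product of balls, the objective splits, and the supremum decouples into independent scalar and vector problems. For a triangle dof, $\sup\{-a\cdot q : \abs{q}_{s^*}\le c_{T,i}\} = c_{T,i}\,\abs{a}_s$ with $a=\nabla u(\LagrangeNodesTrmone{i})$, by Hölder duality $(s^*)^*=s$ (the sign is irrelevant since the ball is symmetric). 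For an edge dof the analogous scalar problem gives $\abs{\bn_E}_s\,c_{E,j}\,\abs{\jump{u}(\LagrangeNodesEr{j})}$. Summing these contributions reproduces \eqref{eq:interpolation_terms_in_DTV} exactly, that is, $\abs{u}_{DTV(\Omega)}$.

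The two points needing care are precisely those underwritten by \cref{lemma:basis_functions_positive_integrals} and by the boundary condition. The decoupling requires the constraint balls to be genuine, non-empty sets, which holds because $c_{T,i}\ge 0$ and $c_{E,j}>0$ for $0\le r\le 4$; when some $c_{T,i}=0$ (possible for $r=3$) the constraint pins $\RTDofsT{i}(\bp)=0$, consistent with the vanishing weight in $\abs{u}_{DTV(\Omega)}$, and for $r=0$ the triangle sum is simply empty. The suppression of the $\partial\Omega$-terms hinges on $\bp\in\RT{r+1}^0(\Omega)$, and the clean edge pairing on the single-valuedness of $\bp\cdot\bn_E$; I expect the bookkeeping of edge orientations in the integration-by-parts step to be the most error-prone part, whereas the dual-norm maximization is routine. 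Since the feasible set is compact and the objective is linear, the supremum is attained, which also justifies writing $\max$ as in \eqref{eq:discrete_TV_abstract}.
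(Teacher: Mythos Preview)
Your proof is correct and follows essentially the same route as the paper: cellwise integration by parts to obtain a pairing of $\nabla u$ and $\jump{u}$ with $\bp$, exact nodal expansion of these polynomials to express the pairing via the Raviart--Thomas dofs \eqref{eq:RTr+1_dofs}, and then decoupled maximization using H\"older duality and the nonnegativity of the weights from \cref{lemma:basis_functions_positive_integrals}. Your explicit appeal to the bijection \eqref{eq:representation_of_p_in_RTr+1} to justify the decoupling, and your remark that the sign of the volume term is immaterial by symmetry of the feasible set, match the paper's reasoning (the paper phrases the latter as ``$-\bp$ verifies the same constraints as $\bp$'').
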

\begin{proof}
	We begin with the observation that integration by parts yields
	\begin{multline}
		\ifthenelse{\boolean{ispreprint}}{\hfill}{}
		\label{eq:dual_representation_of_DTV_1}
		-
		\int_\Omega u \div \bp \, \dx
		= 
		-
		\sum_T \int_T u \div \bp \, \dx
		\ifthenelse{\boolean{ispreprint}}{}{\\}
		=
		\sum_T \int_T \nabla u \cdot \bp \, \dx
		+
		\sum_E \int_E \jump{u} \, (\bp \cdot \bn_E) \, \ds
		\ifthenelse{\boolean{ispreprint}}{\hfill}{}
	\end{multline}
	for any $u \in \DG{r}(\Omega)$ and $\bp \in \RT{r+1}^0(\Omega)$, i.e., $\bp \cdot \bn = 0$ on the boundary $\partial \Omega$.

	Let us consider one of the edge integrals first.
	Notice that $\jump{u} \in \PP_r(E)$ holds and thus $\jump{u} = \sum_j v_j \, \LagrangeBasisEr{j}$ with coeffients $v_j = \jump{u}(\LagrangeNodesEr{j})$.
	By the duality property \eqref{eq:RTr+1_duality_of_basis} of the basis of $\RT{r+1}(\Omega)$,  we obtain
	\begin{multline*}
		\ifthenelse{\boolean{ispreprint}}{\hfill}{}
		\int_E \jump{u} \, (\bp \cdot \bn_E) \, \ds
		\ifthenelse{\boolean{ispreprint}}{}{\\}
		= 
		\sum_j v_j \int_E \LagrangeBasisEr{j} \, (\bp \cdot \bn_E) \, \ds
		= 
		\sum_j v_j \, \RTDofsE{j}(\bp).
		\ifthenelse{\boolean{ispreprint}}{\hfill}{}
	\end{multline*}
	The maximum of this expression w.r.t.\ $\bp$ verifying the constraints in \eqref{eq:dual_representation_of_DTV} is attained when $$\RTDofsE{j}(\bp) = \sgn(v_j) \, \abs{\bn_E}_s \, c_{E,j}$$ holds.
	Here we are using the fact that $c_{E,j} > 0$ holds; see \cref{lemma:basis_functions_positive_integrals}.
	Choosing $\bp$ as the maximizer yields
	\begin{multline*}
		\ifthenelse{\boolean{ispreprint}}{\hfill}{}
		\int_E \jump{u} \, (\bp \cdot \bn_E) \, \ds
		= 
		\sum_j \abs{v_j} \, \abs{\bn_E}_s \, c_{E,j} 
		\ifthenelse{\boolean{ispreprint}}{}{\\}
		= 
		\sum_j \int_E \abs{v_j} \, \LagrangeBasisEr{j} \, \abs{\bn_E}_s \, \ds
		= 
		\int_E \II_E \big\{ \bigabs{\vjump{u}}_s \big\} \, \ds,
		\ifthenelse{\boolean{ispreprint}}{\hfill}{}
	\end{multline*}
	where we used $\abs{v_j} = \bigabs{\jump{u}(\LagrangeNodesEr{j})} = \bigabs{\jump{u}}(\LagrangeNodesEr{j})$ and thus $\abs{v_j} \, \abs{\bn_E}_s = \bigabs{\vjump{u}}_s(\LagrangeNodesEr{j})$ in the last step.

	Next we consider an integral over a triangle, which is relevant only when $r \ge 1$.
	Since $u \in \PP_r(T)$ holds, we have $\nabla u \in \PP_{r-1}(T)^2$ and thus $\nabla u = \sum_i \LagrangeBasisTrmone{i} \, \bw_i$ with vector-valued coefficients $\bw_i = \nabla u(\LagrangeNodesTrmone{i})$.
	Using again the duality property \eqref{eq:RTr+1_duality_of_basis} of the basis of $\RT{r+1}(\Omega)$, we obtain
	\begin{equation*}
		\int_T \nabla u \cdot \bp \, \dx
		=
		\sum_i \bw_i \cdot \int_T \LagrangeBasisTrmone{i} \, \bp \, \dx
		= 
		\sum_i \bw_i \cdot \RTDofsT{i}(\bp).
	\end{equation*}
	By virtue of Hölder's inequality, the maximum of this expression w.r.t.\ $\bp$ verifying the constraints in \eqref{eq:dual_representation_of_DTV} can be characterized explicitly.
	When $\bw_i \neq \bnull$ and $1 \le s < \infty$, then the maximum is attained when
	\begin{equation*}
		\RTDofsT{i}(\bp) 
		=
		\begin{pmatrix}
			(\sgn \bw_{i,1}) \, \abs{\bw_{i,1}}^{s-1}
			\\
			(\sgn \bw_{i,2}) \, \abs{\bw_{i,2}}^{s-1}
		\end{pmatrix}
		\frac{c_{T,i}}{\abs{\bw_i}_s^{s-1}}.
	\end{equation*}
	Similarly, in case $\bw_i \neq \bnull$ and $s = \infty$, we choose
	\begin{equation*}
		\RTDofsT{i}(\bp) 
		=
		\begin{cases}
			c_{T,i} \, (\sgn \bw_{i,\ell}) & \text{for exactly one component} 
			\ifthenelse{\boolean{ispreprint}}{}{\\ &}
			\text{$\ell \in \{1,2\}$ s.t.\ } \abs{\bw_{i,\ell}} = \abs{\bw_i}_\infty, \\
			0 & \text{otherwise}.
		\end{cases}
	\end{equation*}
	When $\bw_i = \bnull$ holds, $\RTDofsT{i}(\bp)$ can be chosen arbitrarily but subject to $\abs{\RTDofsT{i}(\bp)}_{s^*} \le c_{T,i}$.
	In any case, we arrive at the optimal value $\bw_i \cdot \RTDofsT{i}(\bp) = c_{T,i} \, \abs{\bw_i}_s$.
	As before, we are using here the fact that $c_{T,i} \ge 0$ holds; see again \cref{lemma:basis_functions_positive_integrals}.
	For an optimal $\bp$, we thus have 
	\begin{multline*}
		\ifthenelse{\boolean{ispreprint}}{\hfill}{}
		\int_T \nabla u \cdot \bp \, \dx
		= 
		\sum_i \abs{\bw_i}_s \, c_{T,i} 
		\ifthenelse{\boolean{ispreprint}}{}{\\}
		= 
		\sum_i \int_T \abs{\bw_i}_s \, \LagrangeBasisTrmone{i} \, \dx
		= 
		\int_T \II_T \big\{ \abs{\nabla{u}}_s \big\} \, \dx,
		\ifthenelse{\boolean{ispreprint}}{\hfill}{}
	\end{multline*}
	where we used $\abs{\bw_i}_s = \abs{\nabla u(\LagrangeNodesTrmone{i})}_s = \abs{\nabla u}_s(\LagrangeNodesTrmone{i})$ in the last step.

	Finally, we point out that each summand in \eqref{eq:dual_representation_of_DTV_1} depends on $\bp$ only through the dof values $\RTDofsT{i}(\bp)$ or $\RTDofsE{j}(\bp)$ associated with one particular triangle or edge.
	Consequently, the maximum of \eqref{eq:dual_representation_of_DTV_1} is attained if and only if each summand attains its maximum subject to the constraints on the dof values set forth in \eqref{eq:dual_representation_of_DTV}.
	Since $-\bp$ verifies the same constraints as $\bp$, the maxima over $\pm \int_\Omega u \div \bp \, \dx$ coincide and \eqref{eq:dual_representation_of_DTV} is proved.
	\qed
\end{proof}

\begin{remark}[The lowest-order case $r = 0$] 
	\label{remark:dual_representation_of_DTV}
	In the lowest-order case $r = 0$, the only basis function on any interior edge $E$ is $\LagrangeBasisEr{1} \equiv 1$ so that $c_{E,1} = \abs{E}$ holds.
	Consequently, \eqref{eq:dual_representation_of_DTV} reduces to \eqref{eq:discrete_TV_r=0}.
\end{remark}

It may appear peculiar that the constraints for the edge dofs in \eqref{eq:dual_representation_of_DTV} are scalar and linear, while the constraints for the pairwise triangle dofs $\RTDofsT{i}(\bp) \in \R^2$ are generally nonlinear.
Notice, however, that it becomes evident in the proof of \cref{theorem:dual_representation_of_DTV} that the edge dofs are utilized to measure the contributions in $\abs{u}_{DTV(\Omega)}$ associated with the edge jumps of $u$, while the triangle dofs account for the contributions attributed to the gradient $\nabla u$.
Since the edge jumps are maximal in the direction normal to the edge, scalar dofs suffice in order to determine the unknown jump height.
On the other hand, both the norm and direction of the gradient are unknown and must be recovered from integration against suitable functions $\bp$.
To this end, a variation of $\RTDofsT{i}(\bp)$ within a two-dimensional ball (w.r.t.\ the $\abs{\,\cdot\,}_{s^*}$-norm) is required, leading to constraints $\abs{\RTDofsT{i}(\bp)}_{s^*} \le c_{T,i}$ on pairs of coefficients of $\bp$.
Notice that those constraints appear for polynomial degrees $r \ge 1$ and they are nonlinear unless $s^* \in \{1,\infty\}$, which correspond to variants of the TV-seminorm with maximal anisotropy; compare \cref{fig:illustration_of_anisotropy_DG0}.

We conclude this section by comparing the TV-seminorm \eqref{eq:continuous_TV_for_DG} with our discrete variant \eqref{eq:discrete_TV_for_DG} for $\DG{r}(\Omega)$ functions.
For the purpose of the following result, let us denote by $\jump{u}'$ the tangential derivative (in arbitrary direction of traversal) of the scalar jump of $u$ along an edge~$E$.
The symbol
\begin{multline*}
	\ifthenelse{\boolean{ispreprint}}{\hfill}{}
	\abs{u}_{W^{2,\infty}(T)}
	=
	\max \Big\{
		\max_{x \in T} \left\{ \abs{u_{x_1x_1}(x)} \right\}
		, \; 
		\ifthenelse{\boolean{ispreprint}}{}{\\}
		\max_{x \in T} \left\{ \abs{u_{x_1x_2}(x)} \right\}
		, \; 
		\max_{x \in T} \left\{ \abs{u_{x_2x_2}(x)} \right\}
	\Big\}
	\ifthenelse{\boolean{ispreprint}}{\hfill}{}
\end{multline*}
is the $W^{2,\infty}$-seminorm of $u$ on $T$.
Moreover, we recall that the aspect ratio $\gamma_T = h_T/\varrho_T$ of a triangle $T$ is the ratio between its diameter (longest edge) $h_T$ and the diameter $\varrho_T$ of the maximal inscribed circle; see for instance \cite[Definition~1.107]{ErnGuermond2004}.

\begin{proposition}
	\label{prop:error_estimate_DTV}
	There is a constant $C > 0$
	such that
	\begin{multline}
		\ifthenelse{\boolean{ispreprint}}{\hfill}{}
		\label{eq:error_estimate_TV_and_DTV}
		\bigabs{
			\abs{u}_{TV(\Omega)} 
			-
			\abs{u}_{DTV(\Omega)} 
		}
		\ifthenelse{\boolean{ispreprint}}{}{\\}
		\le
		C \, h \,
		\Bigl(
			\max_T \, \abs{u}_{W^{2,\infty}(T)}
			+
			\sum_E \, \bignorm{ \jump{u}' }_{L^1(E)}
		\Bigr)
		\ifthenelse{\boolean{ispreprint}}{\hfill}{}
	\end{multline}
	holds for all $u \in \DG{r}(\Omega)$,
	$0 \le r \le 4$, where $h \coloneqq \max_T h_T$ is the mesh size.
	The constant $C$ depends only on $r$, $s$, the maximal aspect ratio $\max_T \gamma_T$ and the area $\abs{\Omega}$.
\end{proposition}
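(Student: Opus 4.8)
The plan is to observe that the difference $\abs{u}_{TV(\Omega)} - \abs{u}_{DTV(\Omega)}$ is nothing but the sum of the nodal quadrature (interpolation) errors committed in \eqref{eq:discrete_TV_for_DG} relative to \eqref{eq:continuous_TV_for_DG}. Subtracting \eqref{eq:discrete_TV_for_DG} from \eqref{eq:continuous_TV_for_DG} and invoking the triangle inequality reduces the claim to bounding, cell by cell, the two local interpolation errors $\int_T(\abs{\nabla u}_s - \II_T\{\abs{\nabla u}_s\})\,\dx$ and $\int_E(\abs{\vjump{u}}_s - \II_E\{\abs{\vjump{u}}_s\})\,\ds$. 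The essential obstacle is that the integrands $\abs{\nabla u}_s$ and $\abs{\vjump{u}}_s = \abs{\jump{u}}\,\abs{\bn_E}_s$ are merely Lipschitz: although $\nabla u \in \PP_{r-1}(T)^2$ and $\jump{u} \in \PP_r(E)$ are smooth, composition with $\abs{\,\cdot\,}_s$ destroys differentiability at their zeros, so the classical $O(h^r)$ interpolation estimates are unavailable. Since only an $O(h)$ bound is sought, however, first-order (Lipschitz) information will be enough.

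The workhorse is the following elementary estimate, valid for nodal interpolation $\II$ into $\PP_m$ on either a triangle or an edge. Because $\II$ reproduces constants, for every continuous $f$ and every constant $c$ one has $f - \II f = (f-c) - \II(f-c)$, whence
\[
\bignorm{f - \II f}_{L^\infty} \le (1+\Lambda)\,\inf_{c}\bignorm{f-c}_{L^\infty} \le \tfrac{1+\Lambda}{2}\,\operatorname{osc}(f),
\]
where $\Lambda$ is the Lebesgue constant of the interpolation and $\operatorname{osc}(f) = \max f - \min f$. For a fixed degree $0 \le r \le 4$ the nodal points form an affine image of the reference lattice, so $\Lambda$ is affine invariant and depends only on $r$. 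Thus each local error is controlled by the measure of the cell times the oscillation of the integrand, and it remains to bound those oscillations.

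For the edge term I would use $\II_E\{\abs{\vjump{u}}_s\} = \abs{\bn_E}_s\,\II_E\{\abs{\jump{u}}\}$ together with the reverse triangle inequality $\bigl|\,\abs{\jump{u}(x)} - \abs{\jump{u}(y)}\,\bigr| \le \abs{\jump{u}(x) - \jump{u}(y)}$, which yields $\operatorname{osc}_E(\abs{\jump{u}}) \le \operatorname{osc}_E(\jump{u}) \le \bignorm{\jump{u}'}_{L^1(E)}$, the last step because the increment of $\jump{u}$ is the integral of $\jump{u}'$. Multiplying by $\abs{E} \le h$ and by $\abs{\bn_E}_s$ (bounded in terms of $s$ via equivalence with $\abs{\,\cdot\,}_2 = 1$) gives a bound $C\,h\,\bignorm{\jump{u}'}_{L^1(E)}$ per edge, and summation produces the second term on the right-hand side of \eqref{eq:error_estimate_TV_and_DTV}.

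For the triangle term (which is absent for $r \le 1$, since then $\nabla u$ and hence $\abs{\nabla u}_s$ is constant and $\II_T$ is exact) I would again combine the reverse triangle inequality for $\abs{\,\cdot\,}_s$ with a mean-value bound on the polynomial $\nabla u$: for $x,y \in T$,
\[
\bigl|\,\abs{\nabla u(x)}_s - \abs{\nabla u(y)}_s\,\bigr| \le \abs{\nabla u(x) - \nabla u(y)}_s \le C_s\,\abs{x-y}\,\abs{u}_{W^{2,\infty}(T)},
\]
the last step because the Hessian of $u$ controls the increments of $\nabla u$. Hence $\operatorname{osc}_T(\abs{\nabla u}_s) \le C_s\,h_T\,\abs{u}_{W^{2,\infty}(T)}$. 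Feeding this into the interpolation estimate and integrating over $T$ gives a per-triangle bound $C\,\abs{T}\,h_T\,\abs{u}_{W^{2,\infty}(T)}$; summing, using $h_T \le h$ and $\sum_T \abs{T} = \abs{\Omega}$, yields $C\,h\,\abs{\Omega}\,\max_T \abs{u}_{W^{2,\infty}(T)}$. Adding the edge contribution proves \eqref{eq:error_estimate_TV_and_DTV}, with $C$ depending only on $r$, $s$ (through the norm equivalences and Lebesgue constants), the shape-regularity $\max_T \gamma_T$ and $\abs{\Omega}$. The only real obstacle is precisely the step of dominating the oscillation of the non-smooth integrands by $\abs{u}_{W^{2,\infty}(T)}$ and $\bignorm{\jump{u}'}_{L^1(E)}$ via the reverse triangle inequality, which is what lets the sharp $O(h)$ rate survive the loss of smoothness caused by $\abs{\,\cdot\,}_s$.
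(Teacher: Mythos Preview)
Your proof is correct and the overall architecture---splitting into triangle and edge quadrature errors, then exploiting that $\abs{\,\cdot\,}_s$ is Lipschitz---matches the paper. The route to the local bounds, however, is genuinely different.

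The paper invokes a ready-made quadrature error lemma (Ern--Guermond, Lemma~8.4): on a triangle it yields $\bigl|\int_T v - \sum_i v(\LagrangeNodesTrmone{i})\,c_{T,i}\bigr| \le C\,h_T^3\,\abs{v}_{W^{1,\infty}(T)}$, and on an edge $\bigl|\int_E v - \sum_j v(\LagrangeNodesEr{j})\,c_{E,j}\bigr| \le C\,h\,\norm{v'}_{L^1(E)}$. Lipschitz continuity of $\bv \mapsto \abs{\bv}_s$ then converts the $W^{1,\infty}$ and $W^{1,1}$ seminorms of the integrands into $\abs{u}_{W^{2,\infty}(T)}$ and $\bignorm{\jump{u}'}_{L^1(E)}$; the volume sum is closed via $\sum_T h_T^2 \le C$, which is where the aspect-ratio dependence enters.

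Your argument is more elementary and self-contained: reproduction of constants plus the affine-invariant Lebesgue constant give $\norm{f - \II f}_{L^\infty} \le C\,\operatorname{osc}(f)$, and the oscillation of $\abs{\nabla u}_s$ and $\abs{\jump{u}}$ is bounded by the reverse triangle inequality together with the mean-value theorem / fundamental theorem of calculus. A small bonus of your route is that the volume sum closes via $\sum_T \abs{T} = \abs{\Omega}$ directly, so the constant for that term does not actually need the aspect ratio. Conversely, the paper's approach plugs straight into higher-order Bramble--Hilbert machinery, which would pay off if one ever wanted sharper rates for smoother integrands.
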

\begin{proof}
	We use \eqref{eq:interpolation_terms_in_DTV} to interpret
	the discrete TV-semi\-norm
	as a quadrature rule applied to
	the TV-seminorm \eqref{eq:continuous_TV_for_DG}.
	\added{%
		Note that no volume terms appear in the piecewise constant case $r = 0$.
		In case $r \ge 1$,
		we use \cite[Lem.~8.4]{ErnGuermond2004}
		with $d = 2$, $p = \infty$, $k_q = 0$, and $s = 1$ therein,
		for the volume terms in \eqref{eq:interpolation_terms_in_DTV_T}.
		This result yields the existence of a constant $C > 0$
		such that
	}
	\begin{equation*}
		\added{%
			\biggabs{
				\int_T v \, \dx
				-
				\sum_i \, v(\LagrangeNodesTrmone{i}) \, c_{T,i}
			}
			\le
			C \, h_T^3 \, \abs{v}_{W^{1,\infty}(T)}
		}
	\end{equation*}
	\added{%
		holds for all $v \in W^{1,\infty}(T)$.
		Using this estimate for
		$v = \abs{\nabla u}_s$
		shows
	}
	\begin{align*}
		\biggabs{
			\int_T \abs{\nabla u}_s \, \dx
			-
			\sum_i \, \bigabs{\nabla u(\LagrangeNodesTrmone{i})}_s& \, c_{T,i}
		}
		\ifthenelse{\boolean{ispreprint}}{}{\\}
		\le
		\ifthenelse{\boolean{ispreprint}}{}{&}
		C \, h_T^3 \, \bigabs{\abs{\nabla u}_s}_{W^{1,\infty}(T)}.
	\end{align*}
	(During the proof, $C$ denotes a generic constant which may change from instance to instance.)
	Summing over $T$ and using $\sum_T h_T^2 \le C$ (depending on $\abs{\Omega}$ and the maximal aspect ratio $\max_T \gamma_T$),
	we find
	\begin{align*}
		\MoveEqLeft
		\sum_T \,
		\biggabs{
			\int_T \Bigl( \abs{\nabla u}_s - \II_T \big\{ \abs{\nabla u}_s \big\} \Bigr) \, \dx
		}
		\\
		& 
		=
		\sum_T \,
		\biggabs{
			\int_T \abs{\nabla u}_s \, \dx
			-
			\sum_i \, \bigabs{\nabla u(\LagrangeNodesTrmone{i})}_s \, c_{T,i}
		}
		\\
		&
		\le
		C \, h \, \max_T \, \bigabs{\abs{\nabla u}_s}_{W^{1,\infty}(T)}.
	\end{align*}
	Since $\bv \mapsto \abs{\bv}_s$ is globally Lipschitz continuous,
	we find that
	\begin{equation*}
		\max_T \, \bigabs{\abs{\nabla u}_s}_{W^{1,\infty}(T)}
		\le
		C
		\,
		\max_T \, \abs{u}_{W^{2,\infty}(T)}.
	\end{equation*}

	Similarly, for each edge $E$,
	we will apply \cite[Lem.~8.4]{ErnGuermond2004}
	\added{in \eqref{eq:interpolation_terms_in_DTV_E}}
	(using $d = 1$, $p = 1$, $k_q = 0$, and $s = 1$ therein); note that the proof carries
	over to this limit case with $p = 1$ and $d = s$.
	\added{This implies the existence of $C > 0$ such that}
	\begin{equation*}
		\added{%
			\biggabs{
				\int_E v \, \ds
				-
				\sum_j \, v(\LagrangeNodesEr{j}) \, c_{E,j}
			}
			\le
			C \, h \, \norm{ v' }_{L^1(E)}
		}
	\end{equation*}
	\added{%
		holds for all $v \in W^{1,1}(E)$,
		where $v'$ denotes the tangential derivative of $v$.
		Using $v = \bigabs{\jump{u}}$
		yields the estimate
	}
	\begin{equation*}
		\biggabs{
			\int_E \bigabs{\jump{u}} \, \ds
			-
			\sum_j \, \bigabs{\jump{u}(\LagrangeNodesEr{j})} \, c_{E,j}
		}
		\le
		C \, h \, \bignorm{ \abs{\jump{u}}' }_{L^1(E)}.
	\end{equation*}
	Here, $\abs{\jump{u}}'$ is the tangential derivative of the
	absolute value of the jump of $u$ on $E$.
	Notice that
	$\bignorm{ \abs{\jump{u}}' }_{L^1(E)} = \bignorm{ \jump{u}' }_{L^1(E)}$ holds.
	Summing over $E$ yields
	\begin{align*}
		\MoveEqLeft
		\sum_E \,
		\biggabs{
			\int_E \bigabs{\jump{u}} - \II_E \big\{ \bigabs{\jump{u}} \big\} \, \ds
		}
		\\
		&
		=
		\sum_E \,
		\biggabs{
			\int_E \bigabs{\jump{u}} \, \ds
			-
			\sum_j \, \bigabs{\jump{u}(\LagrangeNodesEr{j})} \, c_{E,j}
		}
		\\
		&
		\le
		C \, h \, \sum_E \, \bignorm{ \jump{u}' }_{L^1(E)}.
	\end{align*}
	By using $\bigabs{\vjump{u}}_s = \abs{\jump{u}} \, \abs{\bn_E}_s$ on each edge,
	and combining the above estimates,
	we obtain the announced error bound.
	\qed
\end{proof}
\begin{corollary}[Low Order Polynomial Degrees]
	\label{cor:error_estimate_DTV}
	\begin{enumerate}[label=$(\alph*)$]
		\item
			When $r = 0$, we have
			$\abs{u}_{TV(\Omega)} = \abs{u}_{DTV(\Omega)}$
			for all $u \in \DG{r}(\Omega)$.
		\item 
			When $r = 1$, then
			$\abs{u}_{TV(\Omega)} \le \abs{u}_{DTV(\Omega)}$
			for all $u \in \DG{r}(\Omega)$.
	\end{enumerate}
\end{corollary}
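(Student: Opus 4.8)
The plan is to handle the two cases separately, in each case reducing the claim to the behavior of the interpolation operators $\II_T$ and $\II_E$ on low-degree polynomials.

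For part $(a)$ I would argue that the two seminorms coincide termwise. When $r = 0$ the function $u$ is piecewise constant, so $\nabla u \equiv \bnull$ on each triangle and the volume contributions in both $\abs{u}_{TV(\Omega)}$ and $\abs{u}_{DTV(\Omega)}$ vanish. On every interior edge $E$ the vector jump $\vjump{u}$ is a constant vector, hence $\bigabs{\vjump{u}}_s$ is constant along $E$; since $\II_E$ maps into $\PP_0(E)$ it reproduces this constant exactly, so $\int_E \II_E\{\bigabs{\vjump{u}}_s\}\,\ds = \int_E \bigabs{\vjump{u}}_s\,\ds$ and the seminorms agree. Alternatively, one can simply read off equality from \cref{prop:error_estimate_DTV}: for $r = 0$ both $\abs{u}_{W^{2,\infty}(T)}$ and the tangential derivative $\jump{u}'$ vanish, so the right-hand side of \eqref{eq:error_estimate_TV_and_DTV} is zero.

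For part $(b)$ the volume terms again coincide: when $r = 1$ the gradient $\nabla u$ is constant on each triangle, so $\bigabs{\nabla u}_s$ is constant there and is reproduced exactly by $\II_T$ into $\PP_0(T)$. The claim therefore reduces to comparing the edge contributions, which is where the inequality, rather than equality, arises.

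The key step, and the only one requiring real thought, is a convexity argument on a single edge. Since $u \in \DG{1}(\Omega)$, the scalar jump $\jump{u}$ restricted to $E$ is affine in the arclength parameter, so $\abs{\jump{u}}$ is the absolute value of an affine function and hence convex on $E$. The interpolant $\II_E\{\abs{\jump{u}}\} \in \PP_1(E)$ is the affine function agreeing with $\abs{\jump{u}}$ at the two endpoints $\LagrangeNodesEr{1}$ and $\LagrangeNodesEr{2}$; by convexity this chord dominates $\abs{\jump{u}}$ pointwise on all of $E$. Integrating $\abs{\jump{u}} \le \II_E\{\abs{\jump{u}}\}$ and multiplying by the positive constant $\abs{\bn_E}_s$ (using $\bigabs{\vjump{u}}_s = \abs{\jump{u}}\,\abs{\bn_E}_s$) gives $\int_E \bigabs{\vjump{u}}_s\,\ds \le \int_E \II_E\{\bigabs{\vjump{u}}_s\}\,\ds$ on each edge. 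Summing the equal volume terms and the dominated edge terms then yields $\abs{u}_{TV(\Omega)} \le \abs{u}_{DTV(\Omega)}$. I do not expect any genuine obstacle beyond recognizing that linear interpolation of a convex function overestimates it; the remaining identities are just exactness of $\II_T$ on $\PP_0$ and of $\II_E$ on $\PP_0$ and $\PP_1$.
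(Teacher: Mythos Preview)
Your proposal is correct and follows essentially the same route as the paper: for $r=0$ the paper invokes \cref{prop:error_estimate_DTV} (which you also mention), and for $r=1$ it notes that the volume terms coincide and then states the trapezoidal inequality $\int_E \abs{v}\,\ds \le \tfrac12(\abs{v(\LagrangeNodesEr{1})}+\abs{v(\LagrangeNodesEr{2})})\,\abs{E}$ for affine $v$, which is exactly your convexity-of-the-chord argument made explicit.
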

\begin{proof}
	In case $r = 0$, the right-hand side of the estimate in \cref{prop:error_estimate_DTV}
	vanishes.
	In case $r = 1$, $\nabla u$ is piecewise constant and the corresponding terms in \eqref{eq:continuous_TV_for_DG} and \eqref{eq:discrete_TV_for_DG} coincide. 
	Moreover, for affine functions $v : E \to \R$ it is easy to check that
	\begin{equation*}
		\int_E \abs{v} \, \ds
		\le
		\frac12 \, \Bigl( \bigabs{v(\LagrangeNodesEr{1})} + \bigabs{v(\LagrangeNodesEr{2})} \Bigr) \, \int_E 1 \, \ds,
	\end{equation*}
	where $\LagrangeNodesEr{1}$ and $\LagrangeNodesEr{2}$ are the two end points of $E$.
	This yields the claim in case $r = 1$.
	\qed
\end{proof}

We also mention that the boundary perimeter formula
\begin{equation*}
	\operatorname{Per}(E)
	\coloneqq
	\abs{\chi_E}_{TV(\Omega)} 
	=
	\abs{\chi_E}_{DTV(\Omega)} 
	=
	\operatorname{length}(E) 
\end{equation*}
holds when $E$ is a union of triangles and thus the characteristic function $\chi_E$ belongs to $\DG{0}(\Omega)$.

\section{Discrete Dual Problems}
\label{sec:Discrete_dual_problems}

In this section we revisit the classical image denoising \added{and inpainting} problems, 
\begin{align}
	\label{eq:TV-L2}
	\tag{TV-L2}
	\text{Minimize} \quad & \frac{1}{2} \norm{u-f}_{L^2(\added{\Omega_0})}^2 + \beta \, \abs{u}_{TV(\Omega)},
	\\
	\label{eq:TV-L1}
	\tag{TV-L1}
	\text{Minimize} \quad & \norm{u-f}_{L^1(\added{\Omega_0})} + \beta \, \abs{u}_{TV(\Omega)},
\end{align}
see \cite{RudinOsherFatemi1992,Nikolova2004:1,EsedogluOsher2004,ChanEsedoglu2005,ChambolleCasellesCremersNovagaPock2010}.
We introduce their discrete counterparts and establish their Fenchel duals.
\added{Here $\Omega_0 \subset \Omega$ is the domain where data is available, and $\beta$ is a positive parameter. For simplicity, we assume that the inpainting region $\Omega \setminus \Omega_0$ is the union of a number of triangles in the discrete problems.}

\subsection{The TV-$L^2$ Problem}
\label{subsec:Discrete_dual_problem_TV-L2}

The discrete counterpart of \eqref{eq:TV-L2} we consider is
\begin{equation}
	\label{eq:DTV-L2}
	\tag{DTV-L2}
	\text{Minimize} \quad \frac{1}{2} \norm{u-f}_{L^2(\added{\Omega_0})}^2 + \beta \, \abs{u}_{DTV(\Omega)}.
\end{equation}
The reconstructed image $u$ is sought in $\DG{r}(\Omega)$ for some $0 \le r \le 4$.
We can assume that the given data $f$ belongs to $\DG{r}(\added{\Omega_0})$ as well, possibly after applying interpolation or quasi-interpolation.
Notice that we use the discrete TV-seminorm as regularizer.

The majority of algorithms considered in the literature utilize either the primal or the dual formulations of the problems at hand.
The \emph{continuous} (pre-)dual problem for \eqref{eq:TV-L2} is well known, see for instance \cite{HintermuellerKunisch2004:2}:
\begin{equation}
	\label{eq:dualTV-L2}
	\tag{TV-L2-D}
	\begin{aligned}
		\text{Minimize} \quad & \frac{1}{2} \norm{\div \bp + f}_{L^2(\added{\Omega_0})}^2 
		\\
		\text{s.t.\ } \quad & \abs{\bp}_{s^*} \le \beta, 
	\end{aligned}
\end{equation}
with $\bp \in \bH_0(\div;\Omega)$. 
Our first result in this section shows that the dual of the discrete problem \eqref{eq:DTV-L2} has a very similar structure as \eqref{eq:dualTV-L2}, but with the pointwise constraints replaced by coefficient-wise constraints\added{ as in \eqref{eq:dual_representation_of_DTV}}. 
For future reference, we denote the \added{associated admissible set by} 
\begin{align}
	\ifthenelse{\boolean{ispreprint}}{}{\MoveEqLeft}
	\bP 
	\coloneqq 
	\Big\{ 
		\bp \in \RT{r+1}^0(\Omega): 
	\Big.
	\notag
	\ifthenelse{\boolean{ispreprint}}{}{\\}
		& 
		\abs{\RTDofsT{i}(\bp)}_{s^*} \le c_{T,i} \text{ for all $T$ and all } i, 
		\notag
		\\
		&
		\abs{\RTDofsE{j}(\bp)} \le \abs{\bn_E}_s \, c_{E,j} \text{ for all $E$ and all } j 
	\Big\}.
	\label{eq:constraint_set}
\end{align}

\begin{theorem}[Discrete dual problem for \eqref{eq:DTV-L2}]
	\label{theorem:discrete_dual_problem_TV-L2}
	Let $0 \le r \le 4$.
	Then the dual problem of \eqref{eq:DTV-L2} is
	\begin{equation}
		\label{eq:dualDTV-L2}
		\tag{DTV-L2-D}
		\text{Minimize} \quad \frac{1}{2} \norm{\div \bp + f}_{L^2(\added{\Omega_0})}^2 \text{ s.t.\ } \bp \in \beta \bP.
	\end{equation}
\end{theorem}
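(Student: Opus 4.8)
The plan is to recognize \eqref{eq:DTV-L2} as a saddle-point problem and to dualize it. First I would insert the dual representation of \cref{theorem:dual_representation_of_DTV}: scaling \eqref{eq:dual_representation_of_DTV} by $\beta$ and using the admissible set $\bP$ from \eqref{eq:constraint_set}, we have $\beta \, \abs{u}_{DTV(\Omega)} = \sup_{\bp \in \beta\bP} \int_\Omega u \div \bp \, \dx$. Hence \eqref{eq:DTV-L2} reads $\inf_{u \in \DG{r}(\Omega)} \sup_{\bp \in \beta\bP} \Phi(u,\bp)$, where $\Phi(u,\bp) \coloneqq \frac12 \norm{u-f}_{L^2(\Omega_0)}^2 + \int_\Omega u \div \bp \, \dx$ is the associated Lagrangian.

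The second step is to interchange $\inf_u$ and $\sup_{\bp}$. The function $\Phi$ is convex (in fact quadratic) in $u$ and affine, hence concave and continuous, in $\bp$. The set $\beta\bP$ is convex and contains $\bp = 0$; since each degree of freedom of $\bp$ is bounded exactly once in \eqref{eq:constraint_set} with finite, nonnegative bounds $c_{T,i}$, $c_{E,j}$ (see \cref{lemma:basis_functions_positive_integrals}), it is a closed and bounded, thus compact, subset of the finite-dimensional space $\RT{r+1}^0(\Omega)$. Sion's minimax theorem therefore applies and yields $\inf_u \sup_{\bp} \Phi = \sup_{\bp} \inf_u \Phi$; in particular there is no duality gap, and the dual problem is $\sup_{\bp \in \beta\bP} \inf_{u \in \DG{r}(\Omega)} \Phi(u,\bp)$.

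It remains to carry out the inner minimization over $u$ for fixed $\bp$. Since $\div \bp \in \DG{r}(\Omega)$, the stationarity condition $\int_{\Omega_0}(u-f)\,v\,\dx + \int_\Omega v \div \bp \, \dx = 0$ for all $v \in \DG{r}(\Omega)$ decouples over $\Omega_0$ and the inpainting region $\Omega \setminus \Omega_0$ (a union of cells). On $\Omega_0$ it gives $u = f - \div \bp$, whereas on $\Omega \setminus \Omega_0$ the contribution $\int v \div \bp \, \dx$ is linear and unbounded below in $v$ unless $\div \bp = 0$ there; thus $\inf_u \Phi = -\infty$ off the set on which $\div \bp$ vanishes on $\Omega \setminus \Omega_0$ (a condition that is vacuous in the pure denoising case $\Omega_0 = \Omega$). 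Substituting $u = f - \div \bp$ gives $\inf_u \Phi = \frac12\norm{f}_{L^2(\Omega_0)}^2 - \frac12\norm{\div \bp - f}_{L^2(\Omega_0)}^2$.

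Finally, maximizing this over $\bp \in \beta\bP$ is equivalent to minimizing $\frac12\norm{\div \bp - f}_{L^2(\Omega_0)}^2$; since $\bP = -\bP$, replacing $\bp$ by $-\bp$ turns this into $\frac12\norm{\div \bp + f}_{L^2(\Omega_0)}^2$, which is precisely \eqref{eq:dualDTV-L2} (up to the additive constant $\frac12\norm{f}_{L^2(\Omega_0)}^2$), and the primal optimum is recovered from a dual optimizer through $u = f + \div \bp$. The main obstacle is the rigorous justification of the min-max interchange together with the correct bookkeeping for the inpainting region: one has to confirm compactness of $\beta\bP$ and, when $\Omega_0 \subsetneq \Omega$, observe that the inner infimum enforces $\div \bp = 0$ on $\Omega \setminus \Omega_0$, so that this condition is implicitly part of the admissible set in the dual. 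In the denoising case $\Omega_0 = \Omega$ the argument is completely clean and \eqref{eq:dualDTV-L2} follows verbatim.
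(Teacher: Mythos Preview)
Your argument is correct, and it takes a genuinely different route from the paper. The paper casts \eqref{eq:DTV-L2} in the standard Fenchel--Rockafellar form $F(u)+\beta\,G(\Lambda u)$: it introduces the gradient operator $\Lambda\colon\DG{r}(\Omega)\to Y=\prod_T\PP_{r-1}(T)^2\times\prod_E\PP_r(E)$, identifies $Y^*$ with $\RT{r+1}^0(\Omega)$ via the duality product $\langle\bp,\bd\rangle=\sum_T\int_T\bp\cdot\bd_T\,\dx+\sum_E\int_E(\bp\cdot\bn_E)\,d_E\,\ds$, shows that $\Lambda^*=-\div$, and then computes $F^*$ and $G^*$ explicitly, obtaining $G^*=I_{\bP}$. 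The dual problem is then read off from the abstract template $F^*(-\Lambda^*\bp)+\beta\,G^*(\bp/\beta)$.

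You instead feed the already-established dual representation of $\abs{\cdot}_{DTV(\Omega)}$ (\cref{theorem:dual_representation_of_DTV}) directly into the primal, obtain a saddle-point formulation, and swap $\inf$ and $\sup$ by Sion's theorem, exploiting that $\beta\bP$ is a compact convex subset of a finite-dimensional space. This is more elementary in that it bypasses the explicit computation of $G^*$ and the construction of $Y$, $Y^*$; on the other hand, the paper's investment in the $(\Lambda,Y,Y^*)$ machinery pays off later, since exactly these objects (the Riesz map, the inner product on $Y^*$, the identity $\Lambda^*=-\div$) are reused throughout \cref{sec:Algorithms}. Your observation that the inner infimum forces $\div\bp=0$ on $\Omega\setminus\Omega_0$ is a point the paper's proof passes over silently when it writes $F^*(u)=\tfrac12\norm{u+f}_{L^2(\Omega_0)}^2-\tfrac12\norm{f}_{L^2(\Omega_0)}^2$; your treatment of the inpainting case is in fact more careful.
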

\added{Here $\bp \in \beta \bP$ means that $\bp$ satisfies constraints as in \eqref{eq:constraint_set} but with $c_{T,i}$ and $c_{E,j}$ replaced by $\beta \, c_{T,i}$ and $\beta \, c_{E,j}$, respectively.}
\begin{proof}
	We cast \eqref{eq:DTV-L2} in the common form $F(u) + \beta \, G(\Lambda u)$.
	Let us define $U \coloneqq \DG{r}(\Omega)$ and $F(u) \coloneqq \frac{1}{2} \norm{u-f}_{L^2(\added{\Omega_0})}^2$.
	The operator $\Lambda$ represents the gradient of $u$, which consists of the triangle-wise contributions plus measure-valued contributions due to (normal) edge jumps.
	We therefore define
	\begin{subequations}
		\label{eq:definition_of_Lambda}
		\begin{equation}
			\label{eq:definition_of_Lambda_1}
			\Lambda: U \to Y \coloneqq \prod_T \PP_{r-1}(T)^2 \times \prod_E \PP_r(E).
		\end{equation}
		The components of $\Lambda u$ will be addressed by $(\Lambda u)_T$ and $(\Lambda u)_E$ respectively, and they are defined by
		\begin{equation}
			\label{eq:definition_of_Lambda_2}
			(\Lambda u)_T \coloneqq \restr{\nabla u}{T}
			\quad \text{and} \quad
			(\Lambda u)_E \coloneqq \jump{u}_E.
		\end{equation}
	\end{subequations}
	Finally, the function $G: Y \to \R$ is defined by
	\begin{align}
		G(\bd) 
		\coloneqq
		\sum_T \int_T &\II_T \big\{ \abs{\bd_T}_s \big\} \, \dx 
		\nonumber
		\ifthenelse{\boolean{ispreprint}}{}{\\}
		+
		\ifthenelse{\boolean{ispreprint}}{}{&}
		\sum_E \abs{\bn_E}_s \int_E \II_E \big\{ \abs{d_E} \big\} \, \ds.
		\label{eq:definition_of_G}
	\end{align}
	A crucial observation now is that the dual space $Y^*$ of $Y$ can be identified with $\RT{r+1}^0(\Omega)$ when the duality product is defined as
	\begin{equation}
		\label{eq:duality_product}
		\dual{\bp}{\bd}
		\coloneqq
		\sum_T \int_T \bp \cdot \bd_T \, \dx 
		+
		\sum_E \int_E (\bp \cdot \bn_E) \, d_E \, \ds.
	\end{equation}
	In fact, $\RT{r+1}^0(\Omega)$ has the same dimension as $Y$ and, for any $\bp \in \RT{r+1}^0(\Omega)$, \eqref{eq:duality_product} clearly defines a linear functional on $Y$.
	Moreover, the mapping $\bp \mapsto \dual{\bp}{\cdot}$ is injective since $\dual{\bp}{\bd} = 0$ for all $\bd \in Y$ implies $\bp = \bnull$; see \eqref{eq:RTr+1_dofs}.
	With this representation of $Y^*$ available, we can evaluate $\Lambda^*: \RT{r+1}^0(\Omega) \to U$, where we identify $U$ with its dual space using the Riesz isomorphism induced by the $L^2(\Omega)$ inner product.
	Consequently, $\Lambda^*$ is defined by the condition $\dual{\bp}{\Lambda u} = (u, \Lambda^* \bp)_{L^2(\Omega)}$ for all $\bp \in \RT{r+1}^0(\Omega)$ and all $u \in \DG{r}(\Omega)$.
	The left hand side is
	\begin{multline}
		\label{eq:calculation_of_Lambdastar}
		\dual{\bp}{\Lambda u} 
		=
		\sum_T \int_T \bp \cdot \nabla u \, \dx 
		+
		\sum_E \int_E (\bp \cdot \bn_E) \, \jump{u} \, \ds
		\\
		=
		\sum_T - \int_T (\div \bp) \, u \, \dx 
		+
		\sum_T \int_{\partial T} (\bp \cdot \bn_T) \, u \, \ds
		\ifthenelse{\boolean{ispreprint}}{}{\\}
		+
		\sum_E \int_E (\bp \cdot \bn_E) \, \jump{u} \, \ds
		=
		- \int_\Omega (\div \bp) \, u \, \dx,
	\end{multline}
	hence $\Lambda^* = - \div$ holds.
	Here $\bn_T$ denotes the outward unit normal along the triangle boundary $\partial T$.

	The dual problem can be cast as
	\begin{equation}
		\label{eq:abstract_dual_problem}
		\text{Minimize} \quad F^*(-\Lambda^* \bp) + \beta \, G^*(\bp/\beta).
	\end{equation}
	It is well known that the convex conjugate of $F(u) = \frac{1}{2} \norm{u-f}_{L^2(\added{\Omega_0})}^2$ is $F^*(u) = \frac{1}{2} \norm{u+f}_{L^2(\added{\Omega_0})}^2 - \frac{1}{2} \norm{f}_{L^2(\added{\Omega_0})}^2$.
	It remains to evaluate 
	\begin{multline*}
		G^*(\bp)
		=
		\sup_{\bd \in Y} \dual{\bp}{\bd} - G(\bd)
		\\
		=
		\sup_{\bd \in Y}
		\sum_T \int_T \left[ \bp \cdot \bd_T - \II_T \big\{ \abs{\bd_T}_s \big\} \right] \, \dx 
		\ifthenelse{\boolean{ispreprint}}{}{\\}
		+
		\sum_E \int_E \left[ (\bp \cdot \bn_E) \, d_E - \II_E \big\{ \abs{d_E} \big\} \abs{\bn_E}_s \right] \, \ds.
	\end{multline*}
	Let us consider the contribution from $d_E = \alpha \, \LagrangeBasisEr{j}$ for some $\alpha \in \R$ on a single interior edge $E$, and $\bd \equiv 0$ otherwise.
	By \eqref{eq:RTr+1_dofs_E} and \eqref{eq:dual_representation_of_DTV_upper_bounds}, this contribution is $\alpha \, \RTDofsE{j}(\bp) - \abs{\alpha} \, \abs{\bn_E}_s \, c_{E,j}$, which is bounded above if and only if $\abs{\RTDofsE{j}(\bp)} \le \abs{\bn_E}_s \, c_{E,j}$.
	In this case, the maximum is zero.
	Similarly, it can be shown that the contribution from $\bd_T = \begin{psmallmatrix} \alpha_1 \\ \alpha_2 \end{psmallmatrix} \, \LagrangeBasisTrmone{i}$ remains bounded above if and only if $\abs{\RTDofsT{i}(\bp)}_{s^*} \le c_{T,i}$, in which case the maximum is zero as well.
	This shows that $G^* = I_{\bP}$ is the indicator function of the constraint set $\bP$ defined in \eqref{eq:constraint_set}, which concludes the proof.
	\qed
\end{proof}

Notice that the discrete dual problem \eqref{eq:dualDTV-L2} features the same, very simple set of constraints which already appeared in \eqref{eq:dual_representation_of_DTV}.
As is the case for \eqref{eq:dualTV-L2}, the solution of the discrete dual problem \eqref{eq:dualDTV-L2} is not necessarily unique.
However its divergence is unique \added{on $\Omega_0$} due to the strong convexity of the objective in terms of $\div \bp$. 

\added{Although not needed for \cref{alg:split_Bregman_s_arbitrary,alg:Chambolle-Pock_s_arbitrary}, we state the following relation between the primal and the dual solutions for completeness.}
\begin{lemma}[Recovery of the Primal Solution in \eqref{eq:DTV-L2}]
	\label{lemma:recovery_of_primal_solution_DTV-L2}
	Suppose that $\bp \in \RT{r+1}^0(\Omega)$ is a solution of \eqref{eq:dualDTV-L2}\added{ in case $\Omega_0 = \Omega$}.
	Then the unique solution of \eqref{eq:DTV-L2} is given by 
	\begin{equation}
		\label{eq:Recovery_u_from_p_DTVL2}
		u = \div \bp + f \in \DG{r}(\Omega).
	\end{equation}
\end{lemma}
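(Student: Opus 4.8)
The plan is to reuse the Fenchel framework $F(u) + \beta\,G(\Lambda u)$ set up in the proof of \cref{theorem:discrete_dual_problem_TV-L2}, where (for $\Omega_0 = \Omega$) $F(u) = \frac12 \norm{u-f}_{L^2(\Omega)}^2$, the operator $\Lambda$ is the jump-augmented gradient with adjoint $\Lambda^* = -\div$, and $G^* = I_{\bP}$, while $F^*(v) = \frac12 \norm{v+f}_{L^2(\Omega)}^2 - \frac12 \norm{f}_{L^2(\Omega)}^2$. Since everything lives in finite-dimensional spaces and both $F$ and $G$ are real-valued and convex, the Fenchel--Rockafellar duality theorem applies without any qualification subtleties: strong duality holds and the abstract problem \eqref{eq:abstract_dual_problem} coincides with \eqref{eq:dualDTV-L2}. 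Because $\Omega_0 = \Omega$, the functional $F$ is strictly convex and coercive on all of $U = \DG{r}(\Omega)$, so \eqref{eq:DTV-L2} possesses a unique primal solution; the bounded, closed, convex set $\beta\bP$ in turn guarantees the existence of a dual minimizer $\bp$.

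Next I would invoke the extremality (primal--dual optimality) relations that accompany strong duality: if $u$ solves \eqref{eq:DTV-L2} and $\bp$ solves \eqref{eq:dualDTV-L2}, then $-\Lambda^* \bp \in \partial F(u)$, equivalently $u \in \partial F^*(-\Lambda^* \bp)$. Since $F$ is a smooth quadratic, $F^*$ is differentiable and the latter inclusion sharpens to the identity $u = \nabla F^*(-\Lambda^* \bp)$. A direct computation from the formula for $F^*$ gives $\nabla F^*(v) = v + f$, while $-\Lambda^* \bp = \div \bp$ by \eqref{eq:calculation_of_Lambdastar}. Combining these yields $u = \div \bp + f$, which is exactly \eqref{eq:Recovery_u_from_p_DTVL2}. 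To close the argument I would note that $\div\bp \in \DG{r}(\Omega)$, since the divergence maps $\RT{r+1}(\Omega)$ into $\DG{r}(\Omega)$, and $f \in \DG{r}(\Omega)$, so indeed $u \in \DG{r}(\Omega)$.

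The only genuinely delicate point is the use of the exact extremality relation rather than the mere weak-duality inequality; I would either cite a standard Fenchel--Rockafellar statement or verify it directly, noting that the first-order condition $0 \in \partial_{\bp}\bigl[F^*(\div\bp) + I_{\beta\bP}(\bp)\bigr]$ at a dual minimizer, together with the conjugacy relation $y \in \partial F(u) \iff u = \nabla F^*(y)$, pins down $u$ uniquely. It is worth emphasizing where the hypothesis $\Omega_0 = \Omega$ enters: if the data region were a proper subset, then $F$ would fail to be strictly convex on $\DG{r}(\Omega \setminus \Omega_0)$, the recovery $u = \div\bp + f$ would still hold on $\Omega_0$ but need not determine $u$ on the inpainting region, which is precisely why the lemma is restricted to $\Omega_0 = \Omega$.
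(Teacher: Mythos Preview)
Your proof is correct and follows essentially the same approach as the paper: both invoke the Fenchel extremality relation $-\Lambda^*\bp \in \partial F(u)$ from strong duality and deduce the identity $u = \div\bp + f$. The only cosmetic difference is that the paper rewrites this subdifferential inclusion via the Fenchel--Young equality $F(u) + F^*(-\Lambda^*\bp) = \scalarprod{u}{-\Lambda^*\bp}_{L^2(\Omega)}$ and then algebraically rearranges the expanded inner products into $\norm{u - f - \div\bp}_{L^2(\Omega)}^2 = 0$, whereas you pass to the equivalent inclusion $u \in \partial F^*(-\Lambda^*\bp)$ and compute $\nabla F^*$ directly; your route is marginally cleaner but not substantively different.
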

\begin{proof}
	From \eqref{eq:abstract_dual_problem}, the pair of optimality conditions \added{to analyze} is 
	\begin{equation}
		\label{eq:abstract_optimality_conditions}
		-\Lambda^* \bp \in \partial F(u)
		\quad \text{and} \quad
		\added{\bp \in \partial (\beta \, G)(\Lambda u)},
	\end{equation}
	\added{see \cite[Ch.~III, Sect.~4]{EkelandTemam1999}.}
	\added{Here it suffices to consider the first condition, which by \cite[Prop.~I.5.1]{EkelandTemam1999} is}
	equivalent to $F(u) + F^*(-\Lambda^* \bp) - \scalarprod{u}{-\Lambda^* \bp}_{L^2(\Omega)} = 0$. 
	\added{This equality} can be rewritten as
	\begin{multline*}
		\ifthenelse{\boolean{ispreprint}}{\hfill}{} 
		\norm{u-f}^2_{L^2(\Omega)} 
		+ 
		\norm{\div \bp + f}^2_{L^2(\Omega)} 
		- 
		\norm{f}^2_{L^2(\Omega)} 
		\ifthenelse{\boolean{ispreprint}}{}{\\}
		- 2 \, \scalarprod{u}{\div \bp}_{L^2(\Omega)} 
		= 0.
		\ifthenelse{\boolean{ispreprint}}{\hfill}{}
	\end{multline*}
	Developing each summand in terms of the inner product $\scalarprod{\cdot}{\cdot}_{L^2(\Omega)}$ and rearranging appropriately, we obtain
	\begin{multline*}
		\ifthenelse{\boolean{ispreprint}}{\hfill}{}
		\scalarprod{u - f - \div \bp}{u}_{L^2(\Omega)} 
		+ 
		\scalarprod{- u + f + \div \bp}{f}_{L^2(\Omega)} 
		\ifthenelse{\boolean{ispreprint}}{}{\\}
		+ 
		\scalarprod{\div \bp + f - u}{\div \bp}_{L^2(\Omega)} = 0,
		\ifthenelse{\boolean{ispreprint}}{\hfill}{}
	\end{multline*}
	which amounts to $\norm{u-f-\div \bp}^2_{L^2(\Omega)} = 0$, and \eqref{eq:Recovery_u_from_p_DTVL2} is proved.
	\qed
\end{proof}

\begin{remark}
	\label{remark:recovery_of_primal_solution_DTV-L2}
	\added{%
		In case $\Omega_0 \subsetneq \Omega$, the solution of the primal problem will not be unique in general.
		An inspection of the proof of \cref{lemma:recovery_of_primal_solution_DTV-L2} shows that in this case, one can derive the relation
	}
	\begin{equation*}
		\norm{u-f-\div \bp}^2_{L^2(\Omega_0)} 
		= 
		2 \int_{\Omega \setminus \Omega_0} u \, \div \bp \, \dx
		.
	\end{equation*}
\end{remark}

\subsection{The TV-$L^1$ Problem}
\label{subsec:Discrete_dual_problem_TV-L1}

The continuous (pre-)dual problem associated with 
\begin{equation*}
	\tag{TV-L1}
	\text{Minimize} \quad \norm{u-f}_{L^1(\added{\Omega_0})} + \beta \, \abs{u}_{TV(\Omega)}
\end{equation*}
can be shown along the lines of \cite[Thm.~2.2]{HintermuellerKunisch2004:2} to be
\begin{equation}
	\label{eq:dualTV-L1}
	\tag{TV-L1-D}
	\begin{aligned}
		\text{Minimize} \quad & \int_{\Omega\added{_0}} (\div \bp) \, f \, \dx 
		\\
		\text{s.t.} \quad & \abs{\div \bp} \le \added{\chi_{\Omega_0}} \text{ and } \abs{\bp}_{s^*} \le \beta
	\end{aligned}
\end{equation}
with $\bp \in \bH_0(\div;\Omega)$\added{, where $\chi_{\Omega_0}$ is the characteristic function of $\Omega_0$}.

The definition of an appropriate discrete counterpart of \eqref{eq:TV-L1} deserves some attention.
Simply replacing $\abs{u}_{TV(\Omega)}$ by $\abs{u}_{DTV(\Omega)}$ would yield a discrete dual problem with an infinite number of pointwise constraints $\abs{\div \bp} \le \added{\chi_{\Omega_0}}$ as in \eqref{eq:dualTV-L1}, which would render the problem intractable.
We therefore advocate to consider
\begin{equation}
	\label{eq:DTV-L1}
	\tag{DTV-L1}
	\text{Minimize} \quad \sum_{\added{T \subset \Omega_0}} \int_T \JJ_T \big\{ \abs{u-f} \big\} \, \dx + \beta \, \abs{u}_{DTV(\Omega)}
\end{equation}
as an appropriate discrete version of \eqref{eq:TV-L1} with $u \in \DG{r}(\Omega)$.
Here $\JJ_T$ denotes the interpolation operator into $\PP_r(T)$, i.e.,
\begin{equation*}
	\JJ_T \big\{ \abs{u-f} \big\}
	=
	\sum_k \abs{u-f}(\LagrangeNodesTr{k}) \, \LagrangeBasisTr{k}
	.
\end{equation*}
This choice of applying an interpolatory quadrature formula to the data fidelity (loss) term as well is a decisive advantage, yielding a favorable dual problem.

\begin{theorem}[Discrete dual problem for \eqref{eq:DTV-L1}]
	\label{theorem:discrete_dual_problem_TV-L1}
	Let $0 \le r \le \added{3}$.
	Then the dual problem of \eqref{eq:DTV-L1} is
	\begin{equation}
		\label{eq:dualDTV-L1}
		\tag{DTV-L1-D}
		\begin{aligned}
			\text{Minimize} \quad & \int_{\Omega\added{_0}} (\div \bp) \, f \, \dx 
			\\
			\text{s.t.\ } \quad & \vabs{\int_T (\div \bp) \, \LagrangeBasisTr{k} \, \dx} \le C_{T,k} \added{\text{ when } T \subset \Omega_0} 
			\\
			\text{and} \quad & \vabs{\int_T (\div \bp) \, \LagrangeBasisTr{k} \, \dx} \added{{}= 0} \added{\text{ when } T \subset \Omega \setminus \Omega_0} 
			\\
			\text{and} \quad & \bp \in \beta \bP.
		\end{aligned}
	\end{equation}
\end{theorem}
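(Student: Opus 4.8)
The plan is to mirror the Fenchel--Rockafellar argument used in the proof of \cref{theorem:discrete_dual_problem_TV-L2}. I keep the same primal space $U = \DG{r}(\Omega)$, the same linearized-gradient operator $\Lambda \colon U \to Y$, and the same $G \colon Y \to \R$ as in \eqref{eq:definition_of_Lambda}--\eqref{eq:definition_of_G}, so that $\Lambda^* = -\div$ and $G^* = I_{\bP}$ may be reused verbatim. Only the fidelity term changes: here $F(u) \coloneqq \sum_{T \subset \Omega_0} \int_T \JJ_T\{\abs{u-f}\} \, \dx$. Because $\JJ_T$ is the nodal interpolation into $\PP_r(T)$, the interpolatory quadrature identity turns this into a weighted $\ell^1$ expression in the nodal values, $F(u) = \sum_{T \subset \Omega_0} \sum_k C_{T,k} \, \abs{u_{T,k} - f_{T,k}}$, with weights $C_{T,k} \coloneqq \int_T \LagrangeBasisTr{k} \, \dx$. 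Since $F$ and $G$ are both finite-valued and continuous, the abstract dual $\min_{\bp} F^*(-\Lambda^* \bp) + \beta \, G^*(\bp/\beta)$ holds without a gap (see \cite[Ch.~III]{EkelandTemam1999}), and $\beta \, G^*(\bp/\beta) = I_{\beta \bP}(\bp)$ as before. The only genuinely new computation is $F^*(-\Lambda^*\bp) = F^*(\div\bp)$.

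To evaluate it, I would expand the $L^2$-pairing into nodal contributions. Writing $u = \sum_k u_{T,k} \, \LagrangeBasisTr{k}$ on each cell and setting $q_{T,k} \coloneqq \int_T (\div \bp) \, \LagrangeBasisTr{k} \, \dx$ --- exactly the quantities constrained in \eqref{eq:dualDTV-L1} --- gives $\scalarprod{\div \bp}{u}_{L^2(\Omega)} = \sum_T \sum_k u_{T,k} \, q_{T,k}$, so that the supremum defining $F^*$ decouples over the individual nodal values $u_{T,k}$. For a cell $T \subset \Omega \setminus \Omega_0$ the value $u_{T,k}$ is unpenalized, hence $\sup_{u_{T,k}} u_{T,k}\, q_{T,k}$ is finite (and equal to $0$) if and only if $q_{T,k} = 0$; this produces the equality constraints for cells in the inpainting region. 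For a cell $T \subset \Omega_0$ I substitute $t_{T,k} = u_{T,k} - f_{T,k}$ and use the scalar identity $\sup_{t \in \R} \big( t \, q - C \, \abs{t} \big) = 0$ if $\abs{q} \le C$ and $+\infty$ otherwise, valid whenever $C \ge 0$; this yields the bound constraints $\abs{q_{T,k}} \le C_{T,k}$ together with the residual contribution $\sum_k f_{T,k} \, q_{T,k}$.

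The remaining two points are routine. First, the scalar identity requires $C_{T,k} \ge 0$: these are the order-$r$ closed Newton--Cotes weights on the triangle, whose nonnegativity is guaranteed only for $0 \le r \le 3$ by the argument of \cref{lemma:basis_functions_positive_integrals}\ref{item:closed_Newton-Cotes_T}, which is precisely the origin of the restricted range of $r$ in the statement. Second, using $f \in \DG{r}(\Omega_0)$, so that $\JJ_T\{f\} = f$ on every $T \subset \Omega_0$, the residual collapses to $\sum_{T \subset \Omega_0} \sum_k f_{T,k}\, q_{T,k} = \sum_{T \subset \Omega_0} \int_T (\div \bp)\, f \, \dx = \int_{\Omega_0} (\div \bp)\, f \, \dx$. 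Assembling $F^*(\div\bp)$ --- its finite value plus the two families of constraints --- with the indicator $I_{\beta\bP}$ reproduces exactly \eqref{eq:dualDTV-L1}.

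The main obstacle is the computation of $F^*$ in the last two paragraphs, and within it the careful bookkeeping that separates cells inside $\Omega_0$ (giving $\abs{q_{T,k}} \le C_{T,k}$) from cells in $\Omega \setminus \Omega_0$ (forcing $q_{T,k} = 0$); the requirement $C_{T,k} \ge 0$ underlying the clean form of the scalar conjugate is what pins down the admissible polynomial degrees $0 \le r \le 3$.
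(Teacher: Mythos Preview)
Your argument is correct and follows the same Fenchel--Rockafellar template as the paper, but with one technical difference worth noting. You keep the standard $L^2(\Omega)$ inner product on $U$, so that $\Lambda^* = -\div$ carries over verbatim from the proof of \cref{theorem:discrete_dual_problem_TV-L2}, and then compute $F^*(\div\bp)$ directly by expanding the $L^2$ pairing in the nodal basis; the supremum then decouples over the coefficients $u_{T,k}$. The paper instead switches to the \emph{lumped} inner product \eqref{eq:scalar_product_DGr_lumped} on $U$, which changes the representer of $\Lambda^*$ to Carstensen's quasi-interpolant \eqref{eq:calculation_of_Lambdastar_lumped_2}, computes $F^*$ abstractly as a function on $U$ (yielding the simple box constraints $\abs{u(\LagrangeNodesTr{k})} \le \chi_{\Omega_0}$), and only then evaluates at $-\Lambda^*\bp$. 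Your route is more economical and sidesteps the mild awkwardness that the lumped pairing is degenerate when some $C_{T,k} = 0$; the paper's route is more modular (it isolates $F^*$ cleanly before composing with $\Lambda^*$) and surfaces the quasi-interpolation connection as a side remark. Either way the resulting dual problem and the restriction $0 \le r \le 3$ coming from $C_{T,k} \ge 0$ are identical.
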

\begin{proof}
	We proceed similarly as in the proof of \cref{theorem:discrete_dual_problem_TV-L2}.
	The functions $G$, $G^*$ and $\Lambda$ remain unchanged, and we replace $F$ by 
	\begin{multline}
		\label{eq:definition_of_F_for_DTV-L1}
		\ifthenelse{\boolean{ispreprint}}{\hfill}{}
		F(u) 
		= 
		\sum_{\added{T \subset \Omega_0}} \int_T \JJ_T \big\{ \abs{u-f} \big\} \, \dx
		\ifthenelse{\boolean{ispreprint}}{}{\\}
		=
		\sum_{\added{T \subset \Omega_0},k} \abs{u-f}(\LagrangeNodesTr{k}) \, C_{T,k},
		\ifthenelse{\boolean{ispreprint}}{\hfill}{}
	\end{multline}
	where $C_{T,k} \coloneqq \int_T \LagrangeBasisTr{k} \, \dx$ is non-negative due to \cref{lemma:basis_functions_positive_integrals}.
	We identify again $U = \DG{r}(\Omega)$ with its dual but this time not via the regular $L^2(\Omega)$ inner product but via its lumped approximation, i.e.,
	\begin{equation}
		\label{eq:scalar_product_DGr_lumped}
		\scalarprod{u}{v}_{\text{lumped}}
		\coloneqq
		\sum_{T,k} u(\LagrangeNodesTr{k}) \, v(\LagrangeNodesTr{k}) \, C_{T,k}
	\end{equation}
	for $u, v \in \DG{r}(\Omega)$.
	Notice that this choice first of all affects the representation of $\Lambda^*:\RT{r+1}^0(\Omega) \to U$.
	Indeed, using \eqref{eq:calculation_of_Lambdastar} it follows that $v = \Lambda^* \bp$ is now defined by
	\begin{equation}
		\label{eq:calculation_of_Lambdastar_lumped_1}
		\scalarprod{u}{v}_{\text{lumped}}
		=
		- \int_\Omega (\div \bp) \, u \, \dx
		\quad \text{for all $u \in \DG{r}(T)$}
		.
	\end{equation}
	For the particular choice $u = \LagrangeBasisTr{k}$, this yields
	\begin{equation}
		\label{eq:calculation_of_Lambdastar_lumped_2}
		v(\LagrangeNodesTr{k}) 
		=
		(\Lambda^* \bp)(\LagrangeNodesTr{k})
		=
		- \frac{1}{C_{T,k}} \int_T (\div \bp) \, \LagrangeBasisTr{k} \, \dx
	\end{equation}
	when $C_{T,k} > 0$.
	As a side remark, we mention that \eqref{eq:calculation_of_Lambdastar_lumped_2} means that $\Lambda^* \bp$ is given locally by Carstensen's quasi-interpolant of $-\div \bp$ into $\PP_r(T)$; see \cite{Carstensen1999:2}.
	When $C_{T,k} = 0$, then \eqref{eq:calculation_of_Lambdastar_lumped_1} can only be satisfied when $$\int_T (\div \bp) \, \LagrangeBasisTr{k} \, \dx = 0$$ holds, in which case $v(\LagrangeNodesTr{k})$ is arbitrary. 

	Next, since $F$ from \eqref{eq:definition_of_F_for_DTV-L1} is a weighted $\ell_1$-norm, its convex conjugate can be easily seen to be
	\begin{equation*}
		F^*(u)
		=
		\sum_{\added{T \subset \Omega_0},k} u(\LagrangeNodesTr{k}) \, f(\LagrangeNodesTr{k}) \, C_{T,k} 
	\end{equation*}
	if $\abs{u(\LagrangeNodesTr{k})} \le \added{\chi_{\Omega_0}(\LagrangeNodesTr{k})}$ for all triangles $T$ and $k$ s.t.\ $C_{T,k} > 0$; and $F^*(u) = \infty$ otherwise.
	Consequently, by \eqref{eq:calculation_of_Lambdastar_lumped_2}, 
	\begin{multline*}
		\ifthenelse{\boolean{ispreprint}}{\hfill}{}
		F^*(-\Lambda^* \bp)
		=
		\sum_{\added{T \subset \Omega_0},k} \int_T (\div \bp) \, \LagrangeBasisTr{k} \, \dx \, f(\LagrangeNodesTr{k}) 
		\ifthenelse{\boolean{ispreprint}}{}{\\}
		=
		\sum_{\added{T \subset \Omega_0}} \int_T (\div \bp) \, f \, \dx
		=
		\int_{\added{\Omega_0}} (\div \bp) \, f \, \dx
		\ifthenelse{\boolean{ispreprint}}{\hfill}{}
	\end{multline*}
	holds when $\vabs{\int_T (\div \bp) \, \LagrangeBasisTr{k} \, \dx} \le C_{T,k}\added{\, \chi_{\Omega_0}(\LagrangeNodesTr{k})}$ is satisfied, and $F^*(-\Lambda^* \bp) = \infty$ otherwise.
	Plugging this into \eqref{eq:abstract_dual_problem} concludes the proof.
	\added{Notice that in case $T \subset \Omega \setminus \Omega_0$, the constraints $\int_T (\div \bp) \, \LagrangeBasisTr{k} \, \dx = 0$ for all $k$ imply that $\div \bp \equiv 0$ on $T$ since $\div \bp \in \PP_r(T)$; see \eqref{eq:RTr+1}.}
	\qed
\end{proof}

\begin{remark}[Discrete dual problem \eqref{eq:dualDTV-L1}] 
	\label{remark:discrete_dual_problem_DTV-L1}
	\begin{enumerate}[label=$(\alph*)$]
		\item 
			The replacement of $\norm{\,\cdot\,}_{L^1(\Omega)}$ in the objective as well as of the $L^2(\Omega)$ inner product in $U$ by lumped versions obtained by interpolatory quadrature has been successful in other contexts before; see for instance \cite{CasasHerzogWachsmuth2011:1}.
			Here, it is essential in converting the otherwise infinitely many \emph{pointwise} constraints $\abs{\div \bp} \le \added{\chi_{\Omega_0}}$ into just finitely many constraints on $\div \bp$.

		\item 
			Notice that when $s^* \in \{1, \infty\}$ holds, then the dual \eqref{eq:dualDTV-L1} is a linear program.

		\item 
			One may ask what would have happened if we had applied the same quadrature formula to the $L^2(\Omega)$ inner product already in \eqref{eq:DTV-L2}.
			It can be seen by straightforward calculations that the objective in \eqref{eq:dualDTV-L2} would have been replaced by 
			\begin{equation*}
				\frac{1}{2} \sum_{\added{T \subset \Omega_0},k} \left( \frac{1}{C_{T,k}} \int_T (\div \bp) \, \LagrangeBasisTr{k} \, \dx + f(\LagrangeNodesTr{k}) \right)^2 C_{T,k}
			\end{equation*}
			with summands involving $C_{T,k} = 0$ omitted.
			There is, however, no structural advantage compared to \eqref{eq:dualDTV-L2}.
	\end{enumerate}
\end{remark}

\section{Algorithms for \eqref{eq:DTV-L2}}
\label{sec:Algorithms}

Our goal in this section is to show that \ifthenelse{\boolean{ispreprint}}{a variety of}{\added{two}} standard algorithms developed for images on Cartesian grids, with finite difference approximations of gradient and divergence operations, are implementable with the same efficiency in our framework of higher-order finite elements on triangular meshes.
We focus in this section on \eqref{eq:DTV-L2} and come back to \eqref{eq:DTV-L1} in \cref{sec:DTV-L1}.
\ifthenelse{\boolean{ispreprint}}%
{Specifically, we consider in the following the split Bregman iteration \cite{GoldsteinOsher2009}, the primal-dual method of \cite{ChambollePock2011}, Chambolle's projection method \cite{Chambolle2004}, and a primal-dual active set method similar to \cite{HintermuellerKunisch2004:2}.}%
{Specifically, we consider in the following the split Bregman iteration \cite{GoldsteinOsher2009} \added{and} the primal-dual method of \added{Chambolle and Pock} \cite{ChambollePock2011}.
		\added{We refer the reader to the extended preprint \cite{HerrmannHerzogSchmidtVidalNunezWachsmuth2018:1_preprint} for a additional discussion of Chambolle's projection method \cite{Chambolle2004} and a primal-dual active set method similar to \cite{HintermuellerKunisch2004:2}.}}
	Since \added{these} algorithms are well known, we only focus on the main steps in each case.
Let us recall that we are seeking a solution $u \in \DG{r}$.
For simplicity, we exclude the case $r = 3$ in this section, i.e., we restrict the discussion to the polynomial degrees $r \in \{0,1,2,4\}$ so that all weights $c_{T,i}$ and $c_{E,j}$ are strictly positive.
The case $r = 3$ can be included provided that zero weights are properly treated and we come back to this in \cref{subsec:Missing_polynomial_degrees}.

\subsection{Split Bregman Method}
\label{subsec:split_Bregman}

The split Bregman method (also known as alternating direction method of multipliers (ADMM)) considers the primal problem \eqref{eq:DTV-L2}. 
It introduces an additional variable $\bd$ so that \eqref{eq:DTV-L2} becomes
\ifthenelse{\boolean{ispreprint}}
	{
		\begin{equation}
			\label{eq:DTV-L2_with_splitting}
			\begin{aligned}
				\text{Minimize} \quad & \frac{1}{2} \norm{u-f}_{L^2(\added{\Omega_0})}^2 
				+
				\beta \sum_{T,i} c_{T,i} \, \bigabs{\bd_{T,i}}_s 
				+ 
				\beta \sum_{E,j} \abs{\bn_E}_s \, c_{E,j} \, \abs{d_{E,j}}
				\\
				\text{s.t.} \quad & \bd = \Lambda u,
				\quad (u,\bd) \in \DG{r}(\Omega) \times Y
			\end{aligned}
		\end{equation}
	}
	{
		\begin{multline}
			\label{eq:DTV-L2_with_splitting}
			\text{Minimize} \quad \frac{1}{2} \norm{u-f}_{L^2(\added{\Omega_0})}^2 
			+
			\beta \sum_{T,i} c_{T,i} \, \bigabs{\bd_{T,i}}_s 
			\\
			+ 
			\beta \sum_{E,j} \abs{\bn_E}_s \, c_{E,j} \, \abs{d_{E,j}}
			\quad \text{s.t.} \quad \bd = \Lambda u
		\end{multline}
	}
and enforces the constraint $\bd = \Lambda u = \nabla u$ by an augmented Lagrangian approach.
As detailed in \eqref{eq:definition_of_Lambda}, $\bd$ has contributions $\restr{\nabla u}{T}$ per triangle, as well as contributions $\jump{u}_E$ per interior edge.
We can thus express $\bd$ through its coefficients $\{\bd_{T,i}\}$ and $\{d_{E,j}\}$ w.r.t.\ the standard Lagrangian bases of $\PP_{r-1}(T)^2$ and $\PP_r(E)$,
\begin{equation}
	\label{eq:expansion_of_d}
	\bd
	=
	\sum_i \bd_{T,i} \, \LagrangeBasisTrmone{i} 
	+
	\sum_j d_{E,j} \, \LagrangeBasisEr{j}.
\end{equation}
Using \eqref{eq:interpolation_terms_in_DTV} and \eqref{eq:dual_representation_of_DTV_upper_bounds}, we rewrite the discrete total variation \eqref{eq:discrete_TV_for_DG} in terms of $\bd$ and adjoin the constraint $\bd = \nabla u$ by way of an augmented Lagrangian functional,
	\begin{multline}
		\label{eq:split_Bregman_ALM_functional_with_inner_product_in_Y}
		\ifthenelse{\boolean{ispreprint}}{\hfill}{}
			\frac{1}{2} \norm{u-f}_{L^2(\added{\Omega_0})}^2 
		+
		\beta \sum_{T,i} c_{T,i} \, \bigabs{\bd_{T,i}}_s 
		\ifthenelse{\boolean{ispreprint}}{}{\\}
		+
		\beta \sum_{E,j} \abs{\bn_E}_s \, c_{E,j} \, \abs{d_{E,j}}
		+ 
		\frac{\lambda}{2} \norm{\bd - \Lambda u - \bb}_Y^2.
		\ifthenelse{\boolean{ispreprint}}{\hfill}{}
	\end{multline}
Here $\bb$ is an estimate of the Lagrange multiplier associated with the constraint $\bd = \nabla u \in Y$, and $\bb$ is naturally discretized in the same way as $\bd$.

\begin{remark}[Inner product on $Y$] \hfill \\
	\label{remark:inner_product_on_Y}
	So far we have not endowed the space $$Y = \prod_T \PP_{r-1}(T)^2 \times \prod_E \PP_r(E)$$ with an inner product.
	Since elements of $Y$ represent (mea\deleted{u}sure-valued) gradients of $\DG{r}(\Omega)$ functions, the natural choice would be to endow $Y$ with a total variation norm of vector measures, which would amount to 
	\begin{equation*}
		\sum_T \int_T \abs{\bd_T}_s \, \dx 
		+
		\sum_E \abs{\bn_E}_s \int_E \abs{d_E} \, \ds
	\end{equation*}
	for $\bd \in Y$.
	Clearly, this $L^1$-type norm is not induced by an inner product.
	Therefore we are using\added{ the $L^2$ inner product instead.}
	For computational efficiency, it is crucial to consider its lumped version, which amounts to
	\begin{equation}
		\label{eq:inner_product_in_Y}
		\scalarprod{\bd}{\be}_Y
		\coloneqq
		\added{\scaling}\sum_{T,i} c_{T,i} \, \bd_{T,i} \, \be_{T,i}
		+
		\sum_{E,j} c_{E,j} \, d_{E,j} \, e_{E,j} 
	\end{equation}
	for $\bd, \be \in Y$.
	The associated norm is denoted as $\norm{\bd}_Y^2 = \scalarprod{\bd}{\bd}_Y$.
	\added{Notice that $\scaling > 0$ is a scaling parameter which can be used to improve the convergence of the split Bregman and other iterative methods.}
\end{remark}

The efficiency of the split Bregman iteration depends on the ability to efficiently minimize \eqref{eq:split_Bregman_ALM_functional_with_inner_product_in_Y} independently for $u$, $\bd$ and $\bb$, respectively.
Let us show that this is the case.

\subsubsection*{The Gradient Operator $\Lambda$}

The gradient operator $\Lambda$ evaluates the cell-wise gradient of $u \in \DG{r}(\Omega)$ as well as the edge jump contributions, see \eqref{eq:definition_of_Lambda}.
These are standard operations in any finite element toolbox.
For computational efficiency, the matrix realizing $u(\LagrangeNodesTrmone{i})$ and $u(\LagrangeNodesEr{j})$ in terms of the coefficients of $u$ can be stored once and for all.

\subsubsection*{Solving the $u$-problem}

We consider the minimization of \eqref{eq:split_Bregman_ALM_functional_with_inner_product_in_Y}, or equivalently, of 
\begin{multline}
	\label{eq:split_Bregman_ALM_functional_u}
	\frac{1}{2} \norm{\alert{u}-f}_{L^2(\added{\Omega}\added{_0})}^2 
	+ 
	\frac{\lambda\added{\scaling}}{2} \sum_{T,i} c_{T,i} \, \bigabs{\bd_{T,i} - \nabla \alert{u}(\LagrangeNodesTrmone{i}) - \bb_{T,i}}_2^2 
	\\
	+
	\frac{\lambda}{2} \sum_{E,j} c_{E,j} \, \bigabs{d_{E,j} - \jump{\alert{u}}(\LagrangeNodesEr{j}) - b_{E,j}}^2
\end{multline}
w.r.t.\ $u \in \DG{r}(\Omega)$.
This problem can be interpreted as a DG finite element formulation of the elliptic partial differential equation $- \lambda \, \laplace u + \added{\chi_{\Omega_0}} u = \added{\chi_{\Omega_0}} f + \lambda \div(\bb - \bd)$ in $\Omega$.
More precisely, it constitutes a nonsymmetric interior penalty Galerkin (NIPG) method; compare for instance \cite{RiviereWheelerGirault1999} or \cite[Ch.~2.4, 2.6]{Riviere2008}.
Specialized preconditioned solvers for such systems are available, see for instance \cite{AntoniettiAyuso2007}.
However, as proposed in \cite{GoldsteinOsher2009}, a (block) Gauss--Seidel method may be sufficient.
It is convenient to group the unknowns of the same triangle together, which leads to local systems of size $(r+1)(r+2)/2$.

\subsubsection*{Solving the $\bd$-problem}

The minimization of \eqref{eq:split_Bregman_ALM_functional_with_inner_product_in_Y}, or equivalently, of 
\begin{multline}
	\label{eq:split_Bregman_ALM_functional_d}
	\beta \sum_{T,i} c_{T,i} \, \bigabs{\alert{\bd_{T,i}}}_s 
	+
	\beta \sum_{E,j} \abs{\bn_E}_s \, c_{E,j} \, \abs{\alert{d_{E,j}}}
	\\
	+ 
	\frac{\lambda\added{\scaling}}{2} \sum_{T,i} c_{T,i} \, \bigabs{\alert{\bd_{T,i}} - \nabla u(\LagrangeNodesTrmone{i}) - \bb_{T,i}}_2^2 
	\ifthenelse{\boolean{ispreprint}}{}{\\}
	+
	\frac{\lambda}{2} \sum_{E,j} c_{E,j} \, \bigabs{\alert{d_{E,j}} - \jump{u}(\LagrangeNodesEr{j}) - b_{E,j}}^2
\end{multline}
decouples into the minimization of
\begin{subequations}
	\label{eq:split_Bregman_ALM_functional_dTE}
	\begin{align}
		\label{eq:split_Bregman_ALM_functional_dT}
		\beta \, \bigabs{\alert{\bd_{T,i}}}_s 
		+
		\frac{\lambda\added{\scaling}}{2} \bigabs{\alert{\bd_{T,i}} - \nabla u(\LagrangeNodesTrmone{i}) - \bb_{T,i}}_2^2
		\\
		\label{eq:split_Bregman_ALM_functional_dE}
		\text{and }
		\beta \, \abs{\bn_E}_s \, \abs{\alert{d_{E,j}}}
		+
		\frac{\lambda}{2} \bigabs{\alert{d_{E,j}} - \jump{u}(\LagrangeNodesEr{j}) - b_{E,j}}^2
	\end{align}
\end{subequations}
w.r.t.\  $\alert{\bd_{T,i}} \in \R^2$ and $\alert{d_{E,j}} \in \R$, respectively.

It is well known that the scalar problem \eqref{eq:split_Bregman_ALM_functional_dE} is solved via 
\begin{equation*}
	\alert{d_{E,j}} 
	= 
	\shrink \left( \jump{u}(\LagrangeNodesEr{j}) + b_{E,j}, \; \frac{\beta \, \abs{\bn_E}_s }{\lambda} \right),
\end{equation*}
where $\shrink(\xi,\gamma) \coloneqq \max \left\{ \abs{\xi} - \gamma, \; 0 \right\} \sgn{\xi}$,
while the minimization of \eqref{eq:split_Bregman_ALM_functional_dT} defines the (Euclidean) $\prox$ mapping of $\abs{\,\cdot\,}_s$ and thus we have
\begin{equation*}
	\alert{\bd_{T,i}} 
	= 
	\prox_{\beta/\added{(}\lambda\added{\scaling}\added{)} \abs{\,\cdot\,}_s} \big( \nabla u(\LagrangeNodesTrmone{i}) + \bb_{T,i} \big),
\end{equation*}
where
\begin{equation*}
	\prox_{\beta/\added{(}\lambda\added{\scaling}\added{)} \abs{\,\cdot\,}_s}(\bxi) 
	= 
	\bxi - \frac{\beta}{\lambda\added{\scaling}} \proj_{B_{\abs{\,\cdot\,}_{s^*}}} \left( \frac{\lambda\added{\scaling}}{\beta} \bxi \right).
\end{equation*}
Here $\proj_{B_{\abs{\,\cdot\,}_{s^*}}}$ is the Euclidean orthogonal projection onto the closed $\abs{\,\cdot\,}_{s^*}$-norm unit ball; see for instance \cite[Ex.~6.47]{Beck2017}.
When $s \in \{1, 2\}$, then we have closed-form solutions of \eqref{eq:split_Bregman_ALM_functional_dT}:
\begin{equation*}
	[\alert{\bd_{T,i}}]_\ell
	= 
	\shrink \left( \big[ \nabla u(\LagrangeNodesTrmone{i}) + \bb_{T,i} \big]_\ell, \; \frac{\beta}{\lambda\added{\scaling}} \right)
	\text{ for } \ell = 1,2
\end{equation*}
when $s = 1$
and
\begin{multline*}
	\ifthenelse{\boolean{ispreprint}}{\hfill}{}
	\alert{\bd_{T,i}} 
	= 
	\max \left\{ \bigabs{\nabla u(\LagrangeNodesTrmone{i}) + \bb_{T,i}}_2 - \frac{\beta}{\lambda\added{\scaling}}, \; 0 \right\}
	\ifthenelse{\boolean{ispreprint}}{}{\\}
	\cdot
	\frac{\nabla u(\LagrangeNodesTrmone{i}) + \bb_{T,i}}{\bigabs{\nabla u(\LagrangeNodesTrmone{i}) + \bb_{T,i}}_2}
	\ifthenelse{\boolean{ispreprint}}{\hfill}{}
\end{multline*}
when $s = 2$.
When $\nabla u(\LagrangeNodesTrmone{i}) + \bb_{T,i} = 0$, the second formula is understood as $\alert{\bd_{T,i}} = 0$. 
Efficient approaches for $s = \infty$ are also available; see \cite{DuchiShalev-ShwartzSingerChandra2008}.

\subsubsection*{Updating $\bb$}

This is simply achieved by replacing the current values for $\alert{\bb_{T,i}}$ and $\alert{b_{E,j}}$ by $\bb_{T,i} + \nabla u(\LagrangeNodesTrmone{i}) - \bd_{T,i}$ and $b_{E,j} + \jump{u}(\LagrangeNodesEr{j}) - d_{E,j}$, respectively.

The quantities $\bb_{T,i}$ and $b_{E,j}$ represent discrete multipliers associated with the components of the constraint $\bd = \Lambda u$.
Here we clarify how these multipliers relate to the dual variable $\bp \in \RT{r+1}^0(\Omega)$ in \eqref{eq:dualDTV-L2}.
In fact, let us interpret $\bb_{T,i}$ as the coefficients of a function $\bb_T \in \PP_{r-1}(T)$ and $b_{E,j}$ as the coefficients of a function $b_E \in \PP_r(E)$ w.r.t.\ the standard nodal bases, just as in \eqref{eq:expansion_of_d}.
Moreover, let us define a function $\bar \bp \in \RT{r+1}^0(\Omega)$ by specifying its coefficients as follows,
\begin{equation}
	\label{eq:recover_p_from_discrete_multipliers}
	\RTDofsT{i}(\bar \bp) \coloneqq \lambda\added{\scaling} \, \bb_{T,i} \, c_{T,i}
	\quad \text{and} \quad
	\RTDofsE{j}(\bar \bp) \coloneqq \lambda \, b_{E,j} \, c_{E,j}.
\end{equation}
Then 
\begin{align*}
	\MoveEqLeft
	\int_T \! \bar \bp \cdot (\nabla u - \bd_T) \, \dx
	\ifthenelse{\boolean{ispreprint}}{}{\\ &}
	=
	\added{\sum_i \int_T \! \bar \bp \, \LagrangeBasisTrmone{i} \cdot \big( \nabla u(\LagrangeNodesTrmone{i}) - \bd_{T,i} \big) \, \dx}
	\\
	&
	=
	\ifthenelse{\boolean{ispreprint}}{\sum_i \RTDofsT{i}(\bar \bp) \cdot \big( \nabla u(\LagrangeNodesTrmone{i}) - \bd_{T,i} \big) = }{}
	\lambda\added{\scaling} \sum_i c_{T,i} \, \bb_{T,i} \cdot \big( \nabla u(\LagrangeNodesTrmone{i}) - \bd_{T,i} \big) 
\end{align*}
and
\begin{align*}
	\MoveEqLeft
	\int_E \! \bar \bp \, (\jump{u} - d_E) \, \bn_E \, \ds
	\ifthenelse{\boolean{ispreprint}}{}{\\ &}
	=
	\added{\sum_j \int_E \! \bar \bp \, \LagrangeBasisEr{j} \big( \jump{u}(\LagrangeNodesEr{j}) - d_{E,j} \big) \, \ds }
	\\
	&
	=
	\ifthenelse{\boolean{ispreprint}}{\sum_j \RTDofsE{j}(\bar \bp) \, (\jump{u}(\LagrangeNodesEr{j}) - d_{E,j}) = }{}
	\lambda \sum_j c_{E,j} \, b_{E,j} \, (\jump{u}(\LagrangeNodesEr{j}) - d_{E,j})
	,
\end{align*}
and these are precisely the terms appearing in the discrete augmented Lagrangian functional \eqref{eq:split_Bregman_ALM_functional_with_inner_product_in_Y}.
Con\-se\-quent\-ly, $\bar \bp$ can be interpreted as the Lagrange multiplier associated with the components of the constraint $\bd = \Lambda u$, when the latter are adjoined using the lumped $\bL^2(T)$ and $L^2(E)$ inner products.
It can be shown using 
\added{the KKT conditions for \eqref{eq:DTV-L2_with_splitting} and the optimality conditions \eqref{eq:abstract_optimality_conditions}} that $\bar \bp$ defined by \eqref{eq:recover_p_from_discrete_multipliers} solves the dual problem \eqref{eq:dualDTV-L2}.
\added{%
	To prove this assertion, suppose that $(u,\bd)$ is optimal for \eqref{eq:DTV-L2_with_splitting}.
	We will show that $(u,\bar \bp)$ satisfy the necessary and sufficient optimality conditions \eqref{eq:abstract_optimality_conditions}.
	The Lagrangian for \eqref{eq:DTV-L2_with_splitting} can be written as $F(u) + \beta \, G(\bd) + \dual{\bar \bp}{\Lambda u - \bd}$ and the optimality of $(u,\bd)$ implies $\bar \bp \in \partial (\beta \, G)(\bd) = \partial (\beta \, G)(\Lambda u)$.
	On the other hand, $u$ is optimal for \eqref{eq:DTV-L2}, which implies $0 \in \partial F(u) + \Lambda^* \partial (\beta \, G)(\Lambda u)$ and thus $- \Lambda^* \bar \bp \in \partial F(u)$.
	Altogether, we have verified \eqref{eq:abstract_optimality_conditions}, which is necessary and sufficient for $\bar \bp$ to be optimal for \eqref{eq:dualDTV-L2}.
}

For convenience, we specify the split Bregman iteration in \cref{alg:split_Bregman_s_arbitrary}.
\begin{algorithm}
	\caption{Split Bregman algorithm for \eqref{eq:DTV-L2} with $s \in [1,\infty]$}
	\label{alg:split_Bregman_s_arbitrary}
	\begin{algorithmic}[1]
		\STATE Set $u^{(0)} \coloneqq f \in \DG{r}(\Omega)$, $\bb^{(0)} \coloneqq \bnull \in Y$ and $\bd^{(0)} \coloneqq \bnull \in Y$
		\STATE Set $n \coloneqq 0$
		\WHILE{not converged}
		\STATE Minimize \eqref{eq:split_Bregman_ALM_functional_u} for $u^{(n+1)}$ with data $\bb^{(n)}$ and $\bd^{(n)}$
		\STATE Minimize \eqref{eq:split_Bregman_ALM_functional_dTE} for $\bd^{(n+1)}$ with data $u^{(n+1)}$ and $\bb^{(n)}$
		\STATE Set $\bb_{T,i}^{(n+1)} \coloneqq \bb_{T,i}^{(n)} + \nabla u^{(n+1)}(\LagrangeNodesTrmone{i}) - \bd_{T,i}^{(n+1)}$ 
		\STATE Set $b_{E,j}^{(n+1)} \coloneqq b_{E,j}^{(n)} + \jump{u^{(n+1)}}(\LagrangeNodesEr{j}) - d_{E,j}^{(n+1)}$
		\STATE Set $n \coloneqq n+1$
		\ENDWHILE
		\STATE Set $\bp^{(n)}$ by \eqref{eq:recover_p_from_discrete_multipliers} with data $\bb^{(n)}$ 
	\end{algorithmic}
\end{algorithm}

\subsection{Chambolle--Pock Method}
\label{subsec:Chambolle-Pock}

The method by \cite{ChambollePock2011}, also known as primal-dual extragradient method, see \cite{HeYuan2012}, is based on a reformulation of the optimality conditions in terms of the $\prox$ operators pertaining to $F$ and $G^*$.
We recall that $F$ is defined by $F(u) = \frac{1}{2} \norm{u-f}_{L^2(\added{\Omega_0})}^2$ on $U = \DG{r}(\Omega)$.
Moreover, $G^*$ is defined on $Y^* \cong \RT{r+1}^0(\Omega)$ by $G^* = I_{\bP}$, the indicator function of $\bP$, see \eqref{eq:constraint_set}.

Notice that $\prox$ operators depend on the inner product in the respective space.
We recall that $U$ has been endowed with the (regular, non-lumped) $L^2(\Omega)$ inner product, see the proof of \cref{theorem:discrete_dual_problem_TV-L2}.
For the space $Y$ we are using again the inner product defined in \eqref{eq:inner_product_in_Y}.
Exploiting the duality product \eqref{eq:duality_product} between $Y$ and $Y^* \cong \RT{r+1}^0(\Omega)$ it is then straightforward to derive the Riesz map $R: Y \ni \bd \mapsto \bp \in Y^*$.
In terms of the coefficients of $\bp$, we have
\begin{equation}
	\label{eq:Riesz_map_Y_to_Ystar}
	\RTDofsT{i}(\bp) = c_{T,i}\added{\scaling} \, \bd_{T,i}
	\quad \text{and} \quad
	\RTDofsE{j}(\bp) = c_{E,j} \, d_{E,j}.
\end{equation}
Consequently, the induced inner product in $\RT{r+1}^0(\Omega)$ becomes
\begin{multline}
	\label{eq:inner_product_in_Ystar}
	\ifthenelse{\boolean{ispreprint}}{\hfill}{}
	\scalarprod{\bp}{\bq}_{Y^*}
	\coloneqq
	\sum_{T,i} \frac{1}{c_{T,i}\added{\scaling}} \, \RTDofsT{i}(\bp) \cdot \RTDofsT{i}(\bq) 
	\ifthenelse{\boolean{ispreprint}}{}{\\}
	+
	\sum_{E,j} \frac{1}{c_{E,j}} \, \RTDofsE{j}(\bp) \, \RTDofsE{j}(\bq).
	\ifthenelse{\boolean{ispreprint}}{\hfill}{}
\end{multline}
To summarize, the inner products in $Y$, $Y^*$ as well as the Riesz map are realized efficiently by simple, diagonal operations on the coefficients.

\subsubsection*{Solving the $F$-prox}

Let $\sigma > 0$.
The $\prox$-operator of $\sigma F$, denoted by $$\prox_{\sigma F}(\bar u): U \to U,$$
is defined as $\alert{u} = \prox_{\sigma F}(\bar u)$ if and only if
\begin{equation*}
	\alert{u} = \argmin_{v \in \DG{r}(\Omega)} \frac{1}{2} \norm{v - \bar u}_{L^2(\Omega)}^2 + \frac{\sigma}{2} \norm{v-f}_{L^2(\added{\Omega_0})}^2.
\end{equation*}
For given data $\bar u \added{{}\in \DG{r}(\Omega)}$ and $f \in \added{\DG{r}(\Omega_0)}$, it is easy to see that a necessary and sufficient condition is $\alert{u} - \bar u + \sigma \, (\alert{u} - f) = 0$, which amounts to the coefficient-wise formula
\begin{equation}
	\label{eq:definition_of_prox_sigmaF}
	\alert{u_{T,k}} 
	=
	\frac{1}{1 + \sigma_{\added{T,k}}} \big( \bar u_{T,k} + \sigma_{\added{T,k}} f_{T,k} \big),
\end{equation}
\added{where $\sigma_{T,k} = \sigma$ if $T \subset \Omega_0$ and $\sigma_{T,k} = 0$ otherwise.}

\subsubsection*{Solving the $G^*$-prox}

Let $\tau > 0$.
The $\prox$-operator $$\prox_{\tau G^*}: Y^* \cong \RT{r+1}^0(\Omega) \to Y^*$$ is defined as $\alert{\bp} = \prox_{\tau G^*}(\bar \bp)$ if and only if
\begin{equation}
	\label{eq:prox_tauGstar}
	\alert{\bp} = \argmin_{\bq \in \RT{r+1}^0(\Omega)} \frac{1}{2} \norm{\bq - \bar \bp}_{Y^*}^2 
	\text{ s.t.\ } \bq \in \bP.
\end{equation}
Similarly, the $\prox$ operator for $(\beta \, G)^*$ is obtained by replacing $\bP$ by $\beta \bP$, for any $\tau > 0$.
Due to the diagonal structure of the inner product in $Y^*$, this is efficiently implementable.
When $\bar \bp \in \RT{r+1}^0(\Omega)$, then we obtain the solution in terms of the coefficients, similar to \eqref{eq:split_Bregman_ALM_functional_dTE}, as
\begin{equation}
	\label{eq:definition_of_prox_tauGstar}
	\begin{aligned}
		\RTDofsT{i}(\alert{\bp})
		&
		=
		\proj_{\beta \, c_{T,i} B_{\abs{\,\cdot\,}_{s^*}}} \left( \RTDofsT{i}(\bar \bp) \right) 
		\\
		\RTDofsE{j}(\alert{\bp})
		&
		=
		\min \big\{ \abs{\RTDofsE{j}(\bar \bp)} , \; \beta \, \abs{\bn_E}_s \, c_{E,j} \big\} \frac{\RTDofsE{j}(\bar \bp)}{\abs{\RTDofsE{j}(\bar \bp)}}
		.
	\end{aligned}
\end{equation}
In particular we have
\begin{equation*}
	\big[ \RTDofsT{i}(\alert{\bp}) \big]_{\added{\ell}}
	=
	\min \left\{ \bigabs{[ \RTDofsT{i}(\bar \bp) ]_{\added{\ell}}} , \; \beta \, c_{T,i} \right\} \sgn [ \RTDofsT{i}(\bar \bp) ]_{\added{\ell}}
\end{equation*}
\added{for $\ell = 1,2$} when $s = 1$ and
\begin{equation*}
	\RTDofsT{i}(\alert{\bp})
	=
	\min \left\{ \abs{\RTDofsT{i}(\bar \bp)}_2, \; \beta \, c_{T,i} \right\} \frac{\RTDofsT{i}(\bar \bp)}{\abs{\RTDofsT{i}(\bar \bp)}_2}
\end{equation*}
when $s = 2$.
The second formula is understood as $$\RTDofsT{i}(\alert{\bp}) = 0$$ when $\abs{\RTDofsT{i}(\bar \bp)}_2 = 0$.
An implementation of the Cham\-bolle--Pock method is given in \cref{alg:Chambolle-Pock_s_arbitrary}.
\added{Notice that the solution of the $\prox_{\tau G^*}$ problem is independent of the scaling parameter $S > 0$. 
However $S$ enters through the Riesz isomorphism \eqref{eq:Riesz_map_Y_to_Ystar}.}

\begin{algorithm}
	\caption{Chambolle--Pock algorithm for \eqref{eq:DTV-L2} with $s \in [1,\infty]$}
	\label{alg:Chambolle-Pock_s_arbitrary}
	\begin{algorithmic}[1]
		\STATE Set $u^{(0)} \coloneqq f \in \DG{r}(\Omega)$, $\bp^{(0)} \coloneqq \bnull \in \RT{r+1}^0(\Omega)$ and $\bar \bp^{(0)} \coloneqq \bnull \in \RT{r+1}^0(\Omega)$
		\STATE Set $n \coloneqq 0$
		\WHILE{not converged}
		\STATE Set $v^{(n+1)} \coloneqq \added{\div \bar \bp^{(n)}} \in \DG{r}(\Omega)$
			\COMMENT{$v^{(n+1)} = \added{{}-}\Lambda^* \bar \bp^{(n)}$}
		\STATE Set $u^{(n+1)} \coloneqq \prox_{\sigma F}(u^{(n)} + \sigma \, v^{(n+1)})$, see \eqref{eq:definition_of_prox_sigmaF}
			\COMMENT{$u^{(n+1)} = \prox_{\sigma F}(u^{(n)} \added{{}-{}} \sigma \, \Lambda^* \bar \bp^{(n)})$}
		\STATE Set $\bd^{(n+1)} \coloneqq \Lambda u^{(n+1)} \in Y$
		\STATE Set $\bq^{(n+1)} \coloneqq R \, \bd^{(n+1)} \in \RT{r+1}^0(\Omega)$, where $R$ is the Riesz map \eqref{eq:Riesz_map_Y_to_Ystar}
		\STATE Set $\bp^{(n+1)} \coloneqq \prox_{\tau (\beta G)^*}(\bp^{(n)} \added{{}+\tau \, \bq^{(n+1)}})$, see \eqref{eq:definition_of_prox_tauGstar}
			\\
			\COMMENT{$\bp^{(n+1)} = \prox_{\tau (\beta G)^*}(\bp^{(n)} \added{{}+{}} \tau \, R \, \Lambda u^{(n+1)})$}
		\STATE Set $\bar \bp^{(n+1)} \coloneqq \bp^{(n+1)} + \theta \, (\bp^{(n+1)} - \bp^{(n)})$
		\STATE Set $n \coloneqq n+1$
		\ENDWHILE
	\end{algorithmic}
\end{algorithm}

\ifthenelse{\boolean{ispreprint}}%
{
\subsection{Chambolle's Projection Method}
\label{subsec:Chambolle_Projection}

Chambolle's method was introduced in \cite{Chambolle2004} and it solves \eqref{eq:DTV-L2} via its dual \eqref{eq:dualDTV-L2}, specifically in the case $s = s^* = 2$.
\added{We also require $\Omega_0 = \Omega$ here.}
Squaring the constraints pertaining to $\bp \in \beta \bP$, we obtain the Lagrangian
\begin{multline}
	\label{eq:Chambolle_Projection_Lagrangian}
	\frac{1}{2} \norm{\div \bp + f}_{L^2(\Omega)}^2
	+
	\sum_{T,i} \frac{\alpha_{T,i}}{2} \left( \abs{\RTDofsT{i}(\bp)}_2^2 - \beta^2 c_{T,i}^2 \right) 
	\ifthenelse{\boolean{ispreprint}}{}{\\}
	+ 
	\sum_{E,j} \frac{\alpha_{E,j}}{2} \left( \abs{\RTDofsE{j}(\bp)}^2 - \beta^2 c_{E,j}^2 \right)
	,
\end{multline}
where $\alpha_{T,i}$ and $\alpha_{E,j}$ are Lagrange multipliers.
Consequently, the KKT conditions associated with this formulation of \eqref{eq:dualDTV-L2} are
\begin{multline}
	\label{eq:Chambolle_Projection_KKT_1}
	\scalarprod{\div \bp + f}{\div \delta \bp}_{L^2(\Omega)}
	+
	\sum_{T,i} \alpha_{T,i} \, \RTDofsT{i}(\bp) \cdot \RTDofsT{i}(\delta \bp)
	\ifthenelse{\boolean{ispreprint}}{}{\\}
	+ 
	\sum_{E,j} \alpha_{E,j} \, \RTDofsE{j}(\bp) \, \RTDofsE{j}(\delta \bp)
	= 
	0
\end{multline}
for all $\delta \bp \in \RT{r+1}^0(\Omega)$, together with the complementarity conditions
\begin{subequations}
	\label{eq:Chambolle_Projection_KKT_2}
	\begin{align}
		0 
		\le 
		\alpha_{T,i} 
		&
		\quad \perp \quad
		\abs{\RTDofsT{i}(\bp)}_2 - \beta \, c_{T,i} 
		\le 
		0
		\label{eq:Chambolle_Projection_KKT_2a}
		\ifthenelse{\boolean{ispreprint}}{}{\\}
		& &
		\text{for all $T$ and $i = 1, \ldots, r \, (r+1)/2$ and}
		\ifthenelse{\boolean{ispreprint}}{}{\notag}
		\\
		0 
		\le 
		\alpha_{E,j} 
		&
		\quad \perp \quad
		\abs{\RTDofsE{j}(\bp)} - \beta \, c_{E,j} 
		\le 
		0
		\label{eq:Chambolle_Projection_KKT_2b}
		\ifthenelse{\boolean{ispreprint}}{}{\\}
		& &
		\text{for all $E$ and $j = 1, \ldots, r+1$.}
		\ifthenelse{\boolean{ispreprint}}{}{\notag}
	\end{align}
\end{subequations}
Let us observe that the first term in \eqref{eq:Chambolle_Projection_KKT_1} can be written as $- \dual{\Lambda (\div \bp + f)}{\delta \bp}_{Y,Y^*}$, and hence as
\begin{equation*}
	- \sum_T \int_T \restr{\nabla u}{T} \cdot \delta \bp \, \dx
	- \sum_E \int_E \jump{u} \, (\delta \bp \cdot \bn_E) \, \ds,
\end{equation*}
where we set $u \coloneqq \div \bp + f$ as an abbreviation in accordance with \eqref{eq:Recovery_u_from_p_DTVL2}.
By selecting directions $\delta \bp$ from the collections $\{ \RTBasisT{i} \}$ and $\{ \RTBasisE{j} \}$ of $\RT{r+1}^0(\Omega)$ basis functions, see \cref{sec:Finite_Element_Spaces}, we infer that \eqref{eq:Chambolle_Projection_KKT_1} is equivalent to 
\begin{subequations}
	\label{eq:Chambolle_Projection_KKT_3}
	\begin{align}
		- \nabla u(\LagrangeNodesTrmone{i}) + \alpha_{T,i} \, \RTDofsT{i}(\bp) 
		&
		= 
		0
		\label{eq:Chambolle_Projection_KKT_3a}
		\ifthenelse{\boolean{ispreprint}}{}{\\}
		& &
		\text{for all } T \text{ and } i = 1, \ldots, r \, (r+1)/2
		\ifthenelse{\boolean{ispreprint}}{}{\notag}
		\\
		- \jump{u}(\LagrangeNodesEr{j}) + \alpha_{E,j} \, \RTDofsE{j}(\bp)
		&
		=
		0
		\label{eq:Chambolle_Projection_KKT_3b}
		\ifthenelse{\boolean{ispreprint}}{}{\\}
		& &
		\text{for all } E \text{ and } j = 1, \ldots, r+1
		\ifthenelse{\boolean{ispreprint}}{}{\notag}
		.
	\end{align}
\end{subequations}
A simple calculation similar as in \cite{Chambolle2004} then shows that \eqref{eq:Chambolle_Projection_KKT_2} and \eqref{eq:Chambolle_Projection_KKT_3} imply
\begin{equation}
	\begin{aligned}
		\beta \, \alpha_{T,i} \, c_{T,i} 
		& 
		= 
		\abs{\nabla u(\LagrangeNodesTrmone{i})}_2
		,
		\\
		\beta \, \alpha_{E,j} \, c_{E,j} 
		& 
		= 
		\bigabs{\jump{u}(\LagrangeNodesEr{j})}
		.
		\label{eq:Chambolle_Projection_KKT_4}
	\end{aligned}
\end{equation}
In order to re-derive Chambolle's algorithm for the setting at hand, it remains to rewrite the directional derivative \eqref{eq:Chambolle_Projection_KKT_1} in terms of the gradient $\bg \in Y^*$ w.r.t.\ the $Y^*$ inner product \eqref{eq:inner_product_in_Ystar}.
We obtain that $\bg$ is given by its coefficients
\begin{subequations}
	\label{eq:Chambolle_Projection_gradient}
	\begin{align}
		\RTDofsT{i}(\bg) 
		&
		= 
		c_{T,i} \, \big( \alpha_{T,i} \, \RTDofsT{i}(\bp) - \nabla u(\LagrangeNodesTrmone{i}) \big)
		,
		\label{eq:Chambolle_Projection_gradienta}
		\\
		\RTDofsE{j}(\bg) 
		&
		=
		c_{E,j} \, \big( \alpha_{E,j} \, \RTDofsE{j}(\bp) - \jump{u}(\LagrangeNodesEr{j}) \big)
		.
		\label{eq:Chambolle_Projection_gradientb}
	\end{align}
\end{subequations}
Given an iterate for $\bp$, the main steps of the algorithm are then to update the auxiliary quantity $u = \div \bp + f$ as well as the multipliers $\alpha_{T,i}$ and $\alpha_{E,j}$ according to \eqref{eq:Chambolle_Projection_KKT_4}, and take a semi-implicit gradient step with a suitable step length to update $\bp$.
Since all of these steps are inexpensive, Chambolle's method can be implemented just as efficiently as its finite difference version originally given in \cite{Chambolle2004}. 
For the purpose of comparison, we point out that one step of the method can be written compactly as
\begin{equation*}
	\begin{aligned}
		\RTDofsT{i}(\bp^{(n+1)}) 
		&
		\coloneqq 
		\frac{\RTDofsT{i}(\bp^{(n)}) + \tau \, c_{T,i} \nabla (\div \bp^{(n)} + f)(\LagrangeNodesTrmone{i})}{1 + \tau \, \beta^{-1} \bigabs{\nabla (\div \bp^{(n)} + f)(\LagrangeNodesTrmone{i})}_2}
		,
		\\
		\RTDofsE{j}(\bp^{(n+1)}) 
		&
		\coloneqq 
		\frac{\RTDofsE{j}(\bp^{(n)}) + \tau \, c_{E,j} \jump{\div \bp^{(n)} + f}(\LagrangeNodesEr{j})}{1 + \tau \, \beta^{-1} \bigabs{\jump{\div \bp^{(n)} + f}(\LagrangeNodesEr{j})}}
		.
	\end{aligned}
\end{equation*}
for all $T$ and $i$, and for all $E$ and $j$, respectively.
Let us mention that our variable $\bp$ differs by a factor of $\beta$ from the one used in \cite{Chambolle2004}.
Moreover, in the implemention given as \cref{alg:Chambolle_Projection_s=2}, we found it convenient to rename $\alpha_{T,i} \, c_{T,i}$ as $\gamma_{T,i}$, and similarly for the edge based quantities.
Notice that $\gamma_{T,i}$ and $\gamma_{E,j}$ can be conveniently stored, for instance, as the coefficients of a $\DG{r-1}(\Omega)$ function, and another $\DG{r}$ function on the skeleton of the mesh, i.e., the union of all interior edges.

\begin{algorithm}
	\caption{Chambolle's algorithm for \eqref{eq:DTV-L2} with $s = 2$}
	\label{alg:Chambolle_Projection_s=2}
	\begin{algorithmic}[1]
		\STATE Set $\bp^{(0)} \coloneqq \bnull \in \RT{r+1}^0(\Omega)$
		\STATE Set $n \coloneqq 0$
		\WHILE{not converged}
		\STATE Set $u^{(n)} \coloneqq \div \bp^{(n)} + f \in \DG{r}(\Omega)$
		\STATE Set $\gamma_{T,i} \coloneqq \beta^{-1} \abs{\nabla u^{(n)}(\LagrangeNodesTrmone{i})}_2$ 
			\COMMENT{$\gamma_{T,i} = \alpha_{T,i} \, c_{T,i}$, see \eqref{eq:Chambolle_Projection_KKT_4}}
		\STATE Set $\gamma_{E,j} \coloneqq \beta^{-1} \abs{\jump{u^{(n)}}(\LagrangeNodesEr{j})}$
			\COMMENT{$\gamma_{E,j} = \alpha_{E,j} \, c_{E,j}$, see \eqref{eq:Chambolle_Projection_KKT_4}}
		\STATE Set $\RTDofsT{i}(\bp^{(n+1)}) \coloneqq \displaystyle \frac{\RTDofsT{i}(\bp^{(n)}) + \tau \, c_{T,i} \nabla u^{(n)}(\LagrangeNodesTrmone{i})}{1 + \tau \, \gamma_{T,i}}$
		\STATE Set $\RTDofsE{j}(\bp^{(n+1)}) \coloneqq \displaystyle \frac{\RTDofsE{j}(\bp^{(n)}) + \tau \, c_{E,j} \jump{u^{(n)}}(\LagrangeNodesEr{j})}{1 + \tau \, \gamma_{E,j}}$
		\STATE Set $n \coloneqq n+1$
		\ENDWHILE
	\end{algorithmic}
\end{algorithm}

\subsection{Primal-Dual Active Set Method}
\label{subsec:PDAS}

We consider a primal-dual active set (PDAS) strategy for the dual problem \eqref{eq:dualDTV-L2}.
A similar approach was proposed in \cite{HintermuellerKunisch2004:2}, however in the context of finite difference approximation and an additional regularization of the dual problem.
The PDAS method is closely related to a semi-smooth Newton approach, see \cite{HintermuellerItoKunisch2002}, and it is based on the associated KKT conditions and a semi-smooth reformulation of the complementarity conditions associated with the constraints $\bp \in \beta \bP$.
The approach is particularly suitable when $s = 1$ and thus the constraints describing $\bP$ are simple bounds.
We thus focus on the case $s = 1$.
\added{Moreover, we assume again $\Omega_0 = \Omega$.}
Then the KKT conditions associated with \eqref{eq:dualDTV-L2} can be written as follows:
\begin{multline}
	\label{eq:PDAS_KKT_1}
	\ifthenelse{\boolean{ispreprint}}{\hfill}{}
		\scalarprod{\div \bp + f}{\div \delta \bp}_{L^2(\Omega)}
	+ 
	\sum_{T,i} \bmu_{T,i} \cdot \RTDofsT{i}(\delta \bp)
	\ifthenelse{\boolean{ispreprint}}{}{\\}
	+ 
	\sum_{E,j} \mu_{E,j} \, \RTDofsE{j}(\delta \bp)
	=
	0
	\ifthenelse{\boolean{ispreprint}}{\hfill}{}
\end{multline}
for all $\delta \bp \in \RT{r+1}^0(\Omega)$, together with the complementarity conditions
\begin{subequations}
	\label{eq:PDAS_KKT_2}
	\begin{align}
		& 
		\bmu_{T,i} 
		= 
		\max \left\{ 0, \bmu_{T,i} + c \, \big( \RTDofsT{i}(\bp) \! - \! \beta \, c_{T,i} \, \bone \big) \right\}
		\ifthenelse{\boolean{ispreprint}}{}{\notag \\ & \quad} 
		+
		\min \left\{ 0, \bmu_{T,i} + c \, \big( \RTDofsT{i}(\bp) \! + \! \beta \, c_{T,i} \, \bone \big) \right\}
		,
		\label{eq:PDAS_KKT_2a}
		\\
		& 
		\mu_{E,j} 
		= 
		\max \left\{ 0, \mu_{E,j} + c \, \big( \RTDofsE{j}(\bp) \! - \! \beta \, \abs{\bn_E}_1 \, c_{E,j} \big) \right\}
		\ifthenelse{\boolean{ispreprint}}{}{\notag \\ & \quad} 
		+
		\min \left\{ 0, \mu_{E,j} + c \, \big( \RTDofsE{j}(\bp) \! + \! \beta \, \abs{\bn_E}_1 \, c_{E,j} \big) \right\}
		,
		\label{eq:PDAS_KKT_2b}
	\end{align}
\end{subequations}
where $c > 0$ is arbitrary.
Notice that, as is customary for bound constrained problems, we are using signed multipliers $\bmu_{T,i}$ and $\mu_{E,j}$.
Moreover, \eqref{eq:PDAS_KKT_2a} is understood componentwise in $\R^2$.
The semi-smooth linearization of \eqref{eq:PDAS_KKT_2} agrees with a piecewise linearization on the three branches possible per expression.
When we write the (non-globalized) semi-smooth Newton method in terms of the subsequent iterate, we arrive at \cref{alg:PDAS_s=1}.

Notice that the solution of \eqref{eq:PDAS_reduced_problem_s=1} in \cref{alg:PDAS_s=1} is not necessarily unique.
This is not an obstacle when \eqref{eq:PDAS_reduced_problem_s=1} is solved iteratively, e.g., by the conjugate gradient method.
Alternatively, we might add the regularizing term $(\varepsilon/2) \norm{\bp}_{Y^*}^2$ to the objective.
In this case, also the multplier update on the active sets must be replaced by
\begin{equation*}
	\begin{aligned}
		\big[\bmu_{T,i}^{(n+1)}\big]_{1,2}
		&
		\coloneqq 
		\big[\nabla u^{(n+1)}(\LagrangeNodesTrmone{i})\big]_{1,2}
		-
		\frac{\varepsilon}{c_{T,i}\added{\scaling}} \big[\RTDofsT{i}(\bp^{(n+1)})\big]_{1,2}
		,
		\\
		\mu_{E,j}^{(n+1)}
		&
		\coloneqq 
		\jump{u^{(n+1)}}(\LagrangeNodesEr{j})
		-
		\frac{\varepsilon}{c_{E,j}} \RTDofsE{j}(\bp^{(n+1)})
		.
	\end{aligned}
\end{equation*}
This modification amounts to employing a Huber regularization to $\abs{u}_{DTV(\Omega)}$, see \cref{subsec:Huber_TV-Seminorm}.

\begin{algorithm}
	\caption{Primal-dual active set method for \eqref{eq:dualDTV-L2} with $s = 1$}
	\label{alg:PDAS_s=1}
	\begin{algorithmic}[1]
		\STATE Set $\bp^{(0)} \coloneqq \bnull \in \RT{r+1}^0(\Omega)$ and $\bmu^{(0)} \coloneqq \bnull$
		\STATE Set $n \coloneqq 0$
		\WHILE{not converged}
		\STATE Determine the active sets
			\begin{equation*}
				\begin{aligned}
					\AA_T^{\pm,1}
					&
					\coloneqq
					\left\{ (T,i): \pm [\bmu_{T,i}^{(n)} + c \, \RTDofsT{i}(\bp^{(n)})]_1 > c \, \beta \, c_{T,i} \right\}
					,
					\\
					\AA_T^{\pm,2}
					&
					\coloneqq
					\left\{ (T,i): \pm [\bmu_{T,i}^{(n)} + c \, \RTDofsT{i}(\bp^{(n)})]_2 > c \, \beta \, c_{T,i} \right\}
					,
					\\
					\AA_E^\pm
					&
					\coloneqq
					\left\{ (E,j): \pm [\mu_{E,j}^{(n)} + c \, \RTDofsE{j}(\bp^{(n)})] > c \, \beta \, \abs{\bn_E}_1 \, c_{E,j} \right\}
				\end{aligned}
			\end{equation*}
		\STATE Solve for $\alert{\bp} \in \RT{r+1}^0(\Omega)$ and assign the solution to $\bp^{(n+1)}$
			\begin{equation}
				\label{eq:PDAS_reduced_problem_s=1}
					\quad
					\begin{aligned}
						\text{Minimize} \quad 
						& 
						\frac{1}{2} \norm{\div \alert{\bp} + f}_{L^2(\Omega)}^2,
						\\
						\text{s.t.} 
						\quad
						& 
						\left\{
							\quad
							\begin{aligned}
								&
								[\RTDofsT{i}(\alert{\bp})]_1 = \pm \beta \, c_{T,i}
								& &
								\text{where } (T,i) \in \AA_T^{\pm,1} 
								\\
								& 
								[\RTDofsT{i}(\alert{\bp})]_2 = \pm \beta \, c_{T,i}
								& &
								\text{where } (T,i) \in \AA_T^{\pm,2} 
								\\
								&
								\RTDofsE{j}(\alert{\bp}) = \pm \beta \, \abs{\bn_E}_1 \, c_{E,j}
								& &
								\text{where } (E,j) \in \AA_E^\pm 
							\end{aligned}
						\right.
					\end{aligned}
			\end{equation}
		\STATE Set $u^{(n+1)} \coloneqq \div \bp^{(n+1)} + f \in \DG{r}(\Omega)$
		\STATE Set
			\begin{equation*}
				\begin{aligned}
					\big[\bmu_{T,i}^{(n+1)}\big]_1 
					&
					\coloneqq 
					\big[\nabla u^{(n+1)}(\LagrangeNodesTrmone{i})\big]_1
					& &
					\text{where } (T,i) \in \AA_T^{\pm,1} 
					\\
					\big[\bmu_{T,i}^{(n+1)}\big]_2 
					&
					\coloneqq 
					\big[\nabla u^{(n+1)}(\LagrangeNodesTrmone{i})\big]_2
					& &
					\text{where } (T,i) \in \AA_T^{\pm,2} 
					\\
					\mu_{E,j}^{(n+1)}
					&
					\coloneqq 
					\jump{u^{(n+1)}}(\LagrangeNodesEr{j})
					& &
					\text{where } (E,j) \in \AA_E^\pm
				\end{aligned}
			\end{equation*}
			and zero elsewhere
		\STATE Set $n \coloneqq n+1$
		\ENDWHILE
	\end{algorithmic}
\end{algorithm}
}{}

\section{Implementation Details}
\label{sec:Implementation_Details}

Our implementation was carried out in the finite element framework \fenics\ (version~2017.2).
We refer the reader to \cite{LoggMardalWells2012:1,AlnaesBlechtaHakeJohanssonKehletLoggRichardsonRingRognesWells2015} for background reading.
\fenics\ supports finite elements of various types\added{ on simplicial meshes}, including $\CG{r}$, $\DG{r}$ and $\RT{r+1}$ elements of arbitrary order.
Although we focus on this piece of software, the content of this section will apply to other finite element frameworks as well.

While the bases for the spaces $\CG{r}$ and $\DG{r}$ in \fenics\ are given by the standard nodal basis functions as described in \cref{sec:Finite_Element_Spaces}, the implementation of $\RT{r+1}$ elements in \fenics\ uses degrees of freedom based on point evaluations of $\bp$ and $\bp \cdot \bn_E$, rather than the integral-type dofs in \eqref{eq:RTr+1_dofs}.
Since we wish to take advantage of the simple structure of the constraints in the dual representation \eqref{eq:dual_representation_of_DTV} of $\abs{u}_{DTV(\Omega)}$ however, we rely on the choice of dofs described in \eqref{eq:RTr+1_dofs}.
In order to avoid a global basis transformation, we implemented our own version of the $\RT{r+1}$ finite element in \fenics.

Our implementation uses the dofs in \eqref{eq:RTr+1_dofs} on the reference cell $\widehat T$.
As usual in finite element methods, an arbitrary cell $T$ is then obtained via an affine geometry transformation, i.e.,
\begin{equation*}
	G_T: \widehat T \to T, 
	\qquad
	G_T(\widehat x) = B_T \, \widehat x + b_T,
\end{equation*}
where $B_T \in \R^{2 \times 2}$ is a non-singular matrix and $b_T \in \R^2$.
We mention that $B_T$ need not necessarily have a positive determinant, i.e., the transformation $G_T$ may not necessarily be orientation preserving.
In contrast to $\CG{}$ and $\DG{}$ elements, a second transformation is required to define the dofs and basis functions on the world cell $T$ from the dofs and basis functions on $\widehat T$.
For the ($\bH(\div;\Omega)$-conforming) $\RT{}$ spaces, this is achieved via the (contravariant) Piola transform; see for instance \cite[Ch.~1.4.7]{ErnGuermond2004} or \cite{RognesKirbyLogg2009:1}.
In terms of functions $\widehat \bp$ from the local polynomial space, we have
\begin{equation*}
	\begin{aligned}
		&
		P_T: \PP_r(\widehat T)^2 + \widehat \bx \, \PP_r(\widehat T) \to \PP_r(T)^2 + \bx \, \PP_r(T),
		\\
		&
		P_T(\widehat \bp) = (\det B_T^{-1}) \, B_T \, [ \widehat \bp \circ G_T^{-1} ].
	\end{aligned}
\end{equation*}
The Piola transform preserves tangent directions on edges, as well as normal traces of vector fields, up to edge lengths.
It satisfies
\begin{equation}
	\label{eq:properties_of_Piola_transform}
	\abs{\widehat E} \, \widehat \bp \cdot \widehat \bn_{\widehat E}
	= 
	\pm \abs{E} \, \bp \cdot \bn_E 
	\quad
	\text{and}
	\quad
	\abs{\widehat T} \, B_T \, \widehat \bp
	= 
	\pm \abs{T} \, \bp,
\end{equation}
where $\widehat E$ is an edge of $\widehat T$, $\widehat \bn_{\widehat E}$ is the corresponding unit outer normal, $E = G_T(\widehat E)$, $\bn_E$ is a unit normal vector on $E$ with arbitrary orientation, $\bp = P_T(\widehat \bp)$, and $\abs{T}$ is the area of $T$; see for instance \cite[Lem.~1.84]{ErnGuermond2004}.

We denote by $\wRTDofsT{i}$ and $\wRTDofsE{j}$ the degrees of freedom as in \eqref{eq:RTr+1_dofs}, defined in terms of the nodal basis functions $\wLagrangeBasisTrmone{i} \in \PP_{r-1}(\widehat T)$ and $\wLagrangeBasisEr{j} \in \PP_r(\widehat E)$ on the reference cell. 
Let us consider how these degrees of freedom act on the world cell.
Indeed, the relations above imply 
\begin{subequations}
	\label{eq:transformation_of_RT+1_dofs}
	\begin{align}
		\wRTDofsT{i}(\widehat \bp)
		&
		\coloneqq
		\int_{\widehat T} \wLagrangeBasisTrmone{i} \, \widehat \bp \, \d\widehat x
		\notag
		\\
		&
		= 
		\pm \int_T \LagrangeBasisTrmone{i} \, B_T^{-1} \, \bp \, \dx
		=: 
		\pm \tRTDofsT{i}(\bp)
		,
		\label{eq:transformation_of_RT+1_dofs_T}
		\\
		\wRTDofsE{j}(\widehat \bp) 
		&
		\coloneqq
		\int_{\widehat E} \, \wLagrangeBasisEr{j} \, (\widehat \bp \cdot \widehat \bn_{\widehat E}) \, \d\widehat s
		\notag
		\\
		&
		= 
		\pm \int_E \, \LagrangeBasisEr{j} \, (\bp \cdot \bn_E) \, \ds
		= 
		\pm \RTDofsE{j}(\bp)
		,
		\label{eq:transformation_of_RT+1_dofs_E}
	\end{align}
\end{subequations}
where we used that Lagrangian basis functions are transformed according to $\LagrangeBasisTrmone{i} = \wLagrangeBasisTrmone{i} \circ G_T^{-1}$, and similarly for the edge-based quantities.
The correct choice of the sign in \eqref{eq:properties_of_Piola_transform} and \eqref{eq:transformation_of_RT+1_dofs} depends on the sign of $\det B_T$ and on the relative orientations of $P_T(\widehat \bn_{\widehat E})$ and $\bn_E$.
However the sign is not important since all operations depending on the dofs or coefficients, such as $\RTDofsT{i}(\bp)$, are sign invariant, notably the constraint set in \eqref{eq:constraint_set}.

Notice that while \eqref{eq:transformation_of_RT+1_dofs_E} agrees (possibly up to the sign) with our preferred set of edge-based dofs \eqref{eq:RTr+1_dofs_E}, the interior dofs $\tRTDofsT{i}$ available through the transformation \eqref{eq:transformation_of_RT+1_dofs_T} are related to the desired dofs $\RTDofsT{i}$ from \eqref{eq:RTr+1_dofs_T} via
\begin{equation}
	\label{eq:RTr+1_dofs_T_available_vs_desired}
	\RTDofsT{i}(\bp)
	=
	\sgn (\det B_T) \, B_T^\top \, \tRTDofsT{i}(\bp).
\end{equation}
Notice that this transformation is impossible to avoid since the dofs \eqref{eq:RTr+1_dofs_T} are not invariant under the Piola transform.
However, \eqref{eq:RTr+1_dofs_T_available_vs_desired} is completely local to the triangle and inexpensive to evaluate.
Although not required for our numerical computations, we mention for completeness that the corresponding dual basis functions are related via
\begin{equation}
	\label{eq:RTr+1_basis_functions_T_available_vs_desired}
	\RTBasisT{i}
	=
	\sgn (\det B_T) \, \tRTBasisT{i} B_T^{-\top}.
\end{equation}
To summarize this discussion, functions $\bp \in \RT{r+1}(\Omega)$ will be represented in terms of coefficients w.r.t.\ the dofs $\{\RTDofsE{j}\}$ and $\{\tRTDofsT{i}\}$ in our \fenics\ implementation of the $\RT{}$ space.
Transformations to and from the desired dofs $\{\RTDofsT{i}\}$ will be performed for all operations manipulating directly the coefficients of an $\RT{r+1}$ function.
For instance, the projection operation in \eqref{eq:definition_of_prox_tauGstar} (for the Chambolle--Pock \cref{alg:Chambolle-Pock_s_arbitrary}) in the case $s = 2$ would be implemented as
\begin{multline*}
	\ifthenelse{\boolean{ispreprint}}{\hfill}{}
	\tRTDofsT{i}(\bp)
	=
	B_T^{-\top} \min \left\{ \abs{B_T^\top \, \tRTDofsT{i}(\bar \bp)}_2, \; \beta \, c_{T,i} \right\} 
	\ifthenelse{\boolean{ispreprint}}{}{\\}
	\cdot
	\frac{B_T^\top \, \tRTDofsT{i}(\bar \bp)}{\abs{B_T^\top \, \tRTDofsT{i}(\bar \bp)}_2}
	.
	\ifthenelse{\boolean{ispreprint}}{\hfill}{}
\end{multline*}

\section{Numerical Results for \eqref{eq:DTV-L2}}
\label{sec:Numerical_results_DTV-L2}

In this section we present some numerical results for \eqref{eq:DTV-L2} \added{in the isotropic case ($s = 2$).}
\added{Our goals are to compare the convergence behavior and computational efficiency for \cref{alg:split_Bregman_s_arbitrary,alg:Chambolle-Pock_s_arbitrary} w.r.t.\ varying polynomial degree $r \in \{0,1,2\}$, and to exhibit the benefits of polynomial orders $r \ge 1$ for image quality, both for denoising and inpainting applications.}

\begin{figure}[htp]
	\begin{center}
		\includegraphics[width=0.32\linewidth]{./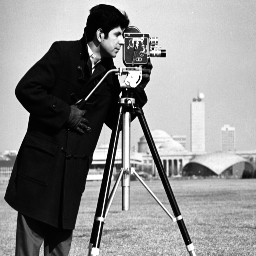}
		\hfill
		\includegraphics[width=0.32\linewidth]{./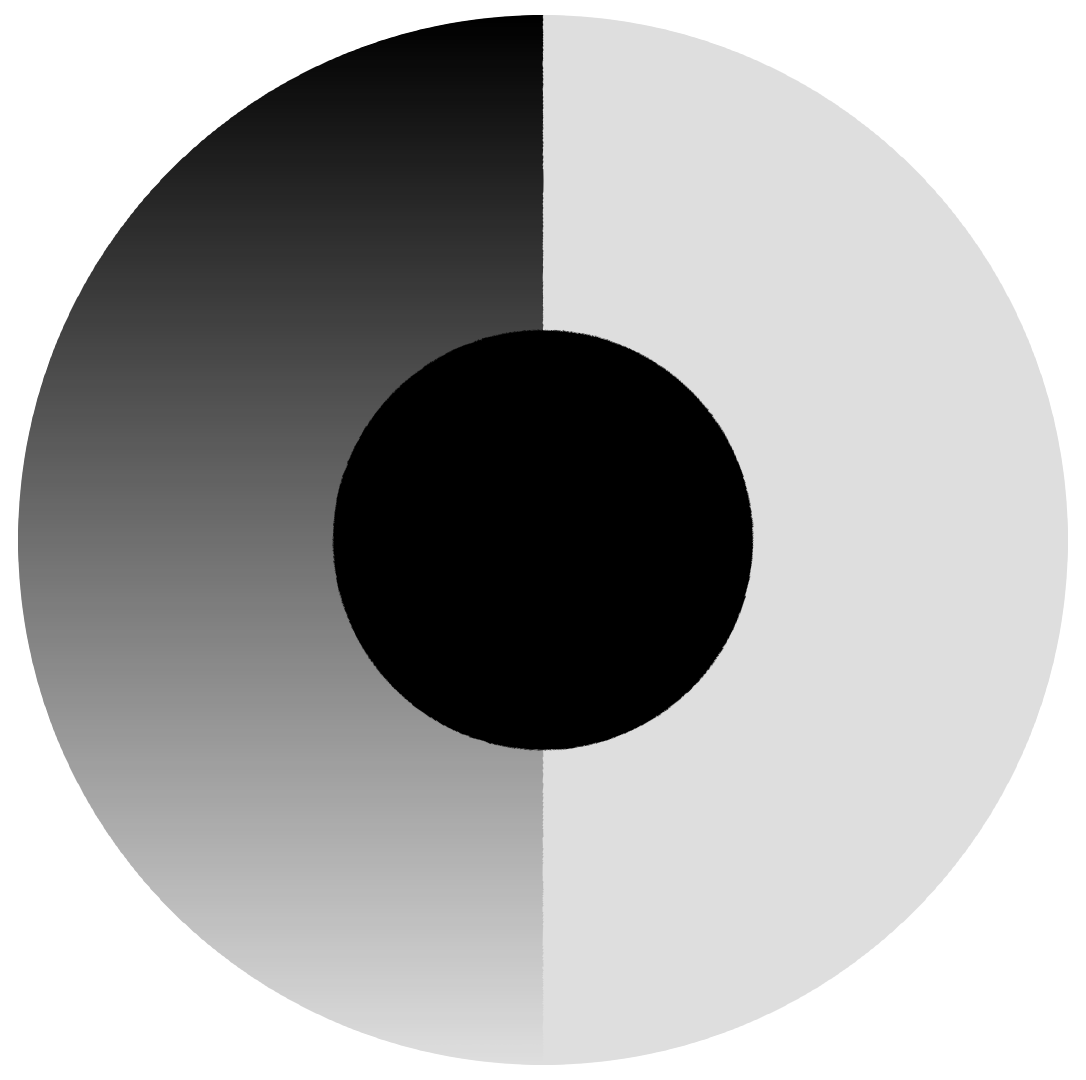}
		\hfill
		\includegraphics[width=0.32\linewidth]{./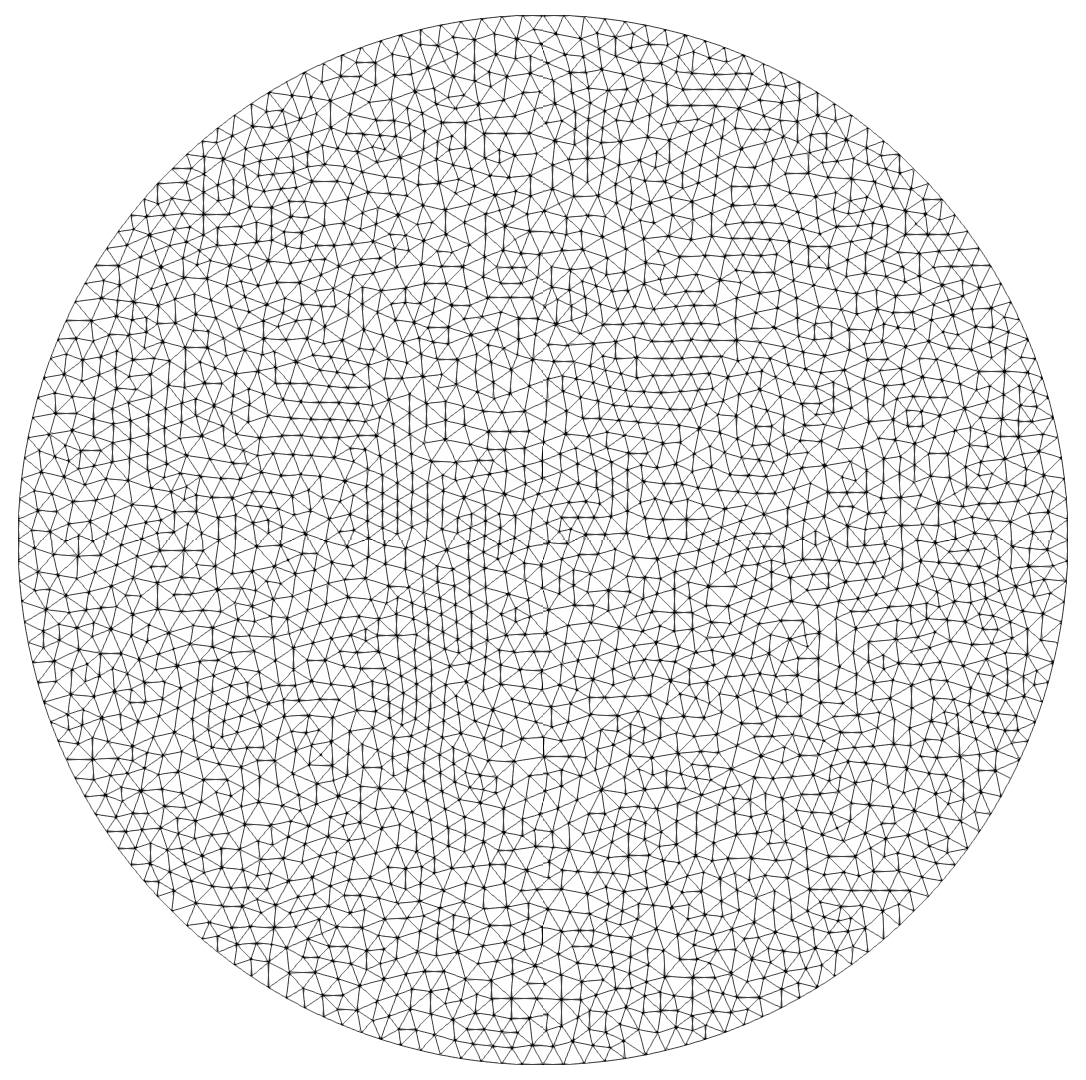}
	\end{center}
	\caption{\added{Left: Cameraman pixel test image. Middle: Non-discrete test image. Right: Mesh used to represent the image in the middle.}}
	\label{fig:InitImage}
\end{figure}
\added{In our tests, we use the two images displayed in \cref{fig:InitImage}. 
	Both have data in the range~$[0,1]$.
	The discrete cameraman image has a resolution of $256 \times 256$ square pixels and will be interpolated onto a $\DG{r}(\Omega)$ space on a triangular grid with crossed diagonals, so the mesh has \num[scientific-notation=false]{262144}~cells and \num[scientific-notation=false]{131585}~vertices.
	We are also using a low resolution version of the cameraman image on a $64 \times 64$~square grid in \cref{subsec:denoising_on_low_resolution_meshes}.
	The second is a non-discrete image on a circle of radius~0.5.
	The corresponding discrete problems are set up on a mesh consisting of \num[scientific-notation=false]{5460} cells and \num[scientific-notation=false]{2811} vertices.
	For each problem, the dimension of the finite element space for the image~$u$ is given in \cref{tab:dimensions_of_test_problems}.
	In all of the following tests, noise is added to each degree of freedom in the form of a normally distributed random variable with standard deviation $\sigma = \num{1e-1}$ and zero mean. 
	Our implementation uses the finite element framework \fenics\ (version~2017.2).
	All experiments were conducted on a standard desktop PC \added{with an Intel~i5-4690 CPU running at 3.50~Ghz, 16~GB RAM and Linux openSUSE Leap~42.1.}
	Visualization was achieved in \paraview.
}

\begin{table*}[htbp]
	\sisetup{scientific-notation=false}
	\centering
	\begin{tabular}{@{}lrrrrr@{}}
		\toprule
		image       & \# of cells $N_T$ & \# of vertices $N_V$ & $\dim \DG{0}(\Omega)$ & $\dim \DG{1}(\Omega)$ & $\dim \DG{2}(\Omega)$ \\
		\midrule
		cameraman   & \num{262144}      & \num{131585}         & \num{262144}          & \num{786432}          & \num{1572864} \\
		cameraman64 & \num{16384}       & \num{8321}           & \num{16384}           & \num{49152}           & \num{98304} \\
		ball        & \num{5460}        & \num{2811}           & \num{5460}            & \num{16380}           & \num{32760} \\
		\midrule 
		\bottomrule \\
	\end{tabular}
	\caption{Dimensions of the $\DG{r}$ spaces for our test images depending on the polynomial degree $r \in \{0,1,2\}$.}
	\label{tab:dimensions_of_test_problems}
\end{table*}

A stopping criterion for%
\ifthenelse{\boolean{ispreprint}}%
	{
		\cref{alg:split_Bregman_s_arbitrary,alg:Chambolle-Pock_s_arbitrary,alg:Chambolle_Projection_s=2,alg:PDAS_s=1} 
	}
	{
		\added{\cref{alg:split_Bregman_s_arbitrary,alg:Chambolle-Pock_s_arbitrary}} 
	}%
can be based on the primal-dual gap
\begin{equation}
	\label{eq:primal-dual_gap_dualTV-L2}
	F(u) + \beta \, G(\Lambda u) + F^*(\Lambda^* \bp) + \beta \, G^*(\bp/\beta)
	.
\end{equation}
Notice that since $G^* = I_{\bP}$ is the indicator function of the constraint set $\bP$, the last term is either~$0$ or $\infty$, and \eqref{eq:primal-dual_gap_dualTV-L2} can therefore not directly serve as a meaningful stopping criterion.
Instead, we omit the last term in \eqref{eq:primal-dual_gap_dualTV-L2} and introduce a distance-to-feasibility measure for $\bp$ as a second criterion.
For the latter, we utilize the difference of $\bp$ and its $Y^*$-orthogonal projection onto $\beta \bP$, measured in the $Y^*$-norm squared.
This expression can be easily evaluated when $s \in \{1,2\}$.
Straightforward calculations then show that we obtain the following specific expressions:
\begin{subequations}
	\label{eq:stopping_criterion}
	\begin{multline}
		\primaldualgap{u}{\bp}
		\coloneqq
		\frac{1}{2} \norm{u-f}_{L^2(\added{\Omega_0})}^2
		+
		\frac{1}{2} \norm{\div \bp+f}_{L^2(\added{\Omega_0})}^2
		\\
		-
		\frac{1}{2} \norm{f}_{L^2(\added{\Omega_0})}^2
		+ 
		\beta \sum_T \int_T \II_T \big\{ \abs{\nabla u}_s \big\} \, \dx
		\ifthenelse{\boolean{ispreprint}}{}{\\}
		+ 
		\beta \sum_E \int_E \II_E \big\{ \bigabs{\vjump{u}}_s \big\} \, \ds
		\label{eq:stopping_criterion_1}
	\end{multline}
	and
	\begin{multline}
		\label{eq:stopping_criterion_2}
		\infeasibility{2}{\bp}
		\coloneqq
		\sum_{T,i} \frac{1}{c_{T,i}\added{\scaling}} \max \big\{ \abs{\RTDofsT{i}(\bp)}_2 - \beta \, c_{T,i}, \; 0 \big\}^2
		\\
		+ 
		\sum_{E,j} \frac{1}{c_{E,j}} \max \big\{ \abs{\RTDofsE{j}(\bp)} - \beta \, c_{E,j}, \; 0 \big\}^2 
	\end{multline}
	when $s = 2$, as well as
	\begin{multline}
		\label{eq:stopping_criterion_3}
		\infeasibility{1}{\bp}
		\ifthenelse{\boolean{ispreprint}}{}{\\}
			\coloneqq
			\sum_{T,i} \frac{1}{c_{T,i}\added{\scaling}} \sum_{\ell=1}^2 \max \big\{ \bigabs{[\RTDofsT{i}(\bp)]_\ell} - \beta \, c_{T,i}, \; 0 \big\}^2
			\\
			+
			\sum_{E,j} \frac{1}{c_{E,j}} \max \big\{ \abs{\RTDofsE{j}(\bp)} - \beta \, \abs{\bn_E}_s \, c_{E,j}, \; 0 \big\}^2 
		\end{multline}
	when $s = 1$.
\end{subequations}
\added{In our numerical experiments, we focus on the case $s = 2$ and we stop either algorithm as soon as the iterates $(u,\bp)$ satisfy the following conditions:}
\begin{equation}
	\label{stopping_criterion}
	\begin{aligned}
		\abs{\primaldualgap{u}{\bp}} 
		&
		\le 
		\varepsilon_\text{rel} \primaldualgap{f}{\bnull}\\
		\infeasibility{2}{\bp} 
		&
		\le 
		\num{1e-11}
	\end{aligned}
\end{equation}
\added{with $\varepsilon_\text{rel} = \num{1e-3}$. 
	As a measurement for the quality of our results we use the common peak signal-to-noise ratio, defined by
}
\begin{equation}
	\label{PSNR}
	\psnr(u,\uref) = 10 \log_{10} \left(\frac{M^2 \, \abs{\Omega}}{\norm{u-\uref}_{L^2(\Omega)}^2}\right) ,
\end{equation}
\added{where $u$ is the recovered image, $\uref$ is the reference image, and $\abs{\Omega}$ is the area of the image.
	Moreover, $M = 1$ is the maximum possible image value.}

\subsection{Denoising of $\DG{r}$-Images}
\label{subsec:denoising_of_DGr_in_DGr}

\added{This section addresses the denoising of $\DG{r}$~images and it also serves as a comparative study of \cref{alg:split_Bregman_s_arbitrary,alg:Chambolle-Pock_s_arbitrary}.
We represent (interpolate) the \added{non-discrete} image displayed in~\cref{fig:InitImage}~(middle) in the space $\DG{r}(\Omega)$ for $r=0,1,2$. 
Noise is added to each degree of freedom as described above. 
We show the denoising results for the split Bregman method (\cref{alg:split_Bregman_s_arbitrary}) in \cref{fig:TestCase1}.
The results for the Chambolle-Pock approach (\cref{alg:Chambolle-Pock_s_arbitrary}) are very similar and are therefore not shown.
In either case, the noise is removed successfully. 
The infeasibility criterion \eqref{eq:stopping_criterion_2} in the final iteration was smaller than $10^{-37}$ for \cref{alg:split_Bregman_s_arbitrary} and smaller than $10^{-11}$ for \cref{alg:Chambolle-Pock_s_arbitrary} in all cases $r \in \{0,1,2\}$.
\Cref{tab:data_test_1} summarizes the convergence bevahior of both methods.
Since the split Bregman method performed slightly better w.r.t.\ iteration count and run-time in our implementation, we will use only \cref{alg:split_Bregman_s_arbitrary} for the subsequent denoising examples (\cref{subsec:Comparison_to_pixel_grids,subsec:denoising_on_low_resolution_meshes}).}
\begin{figure}[htbp]
	\begin{center}
		\includegraphics[width=0.32\linewidth]{./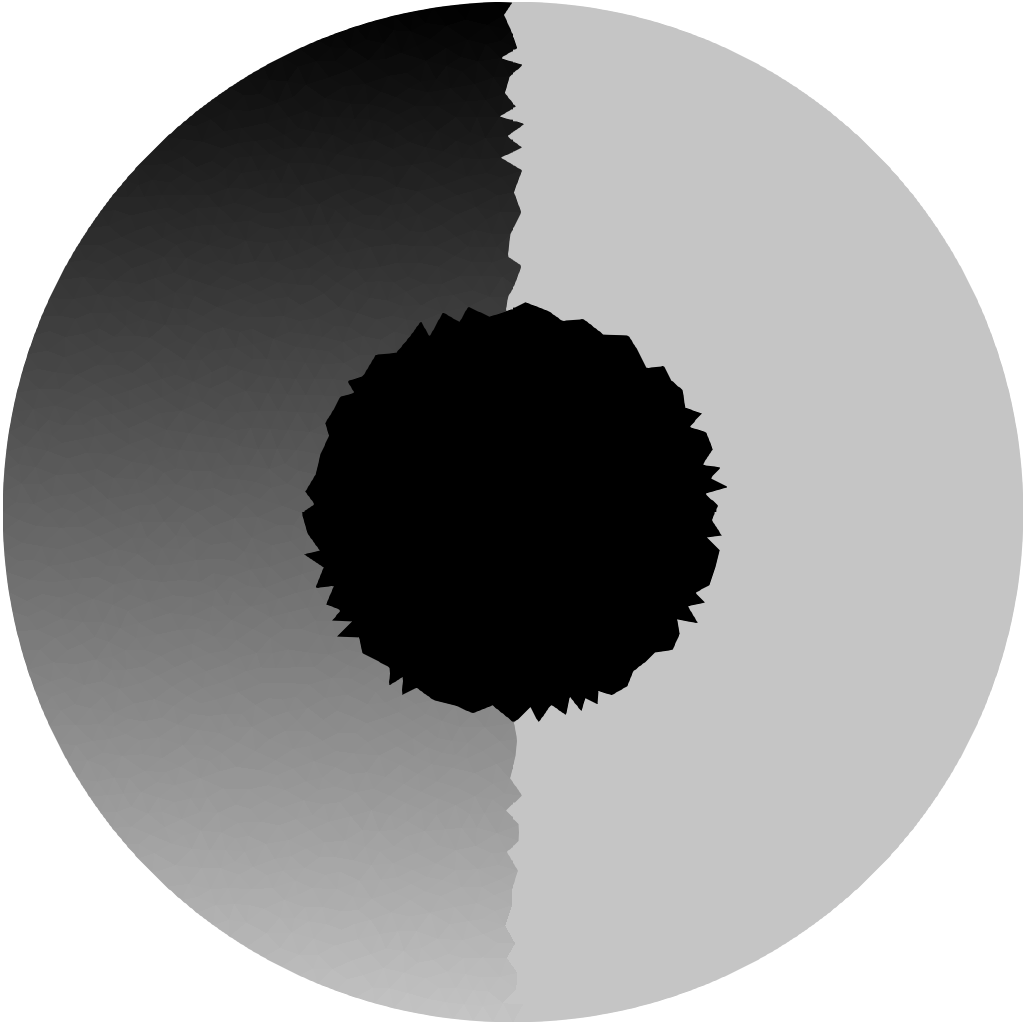}
		\hfill
		\includegraphics[width=0.32\linewidth]{./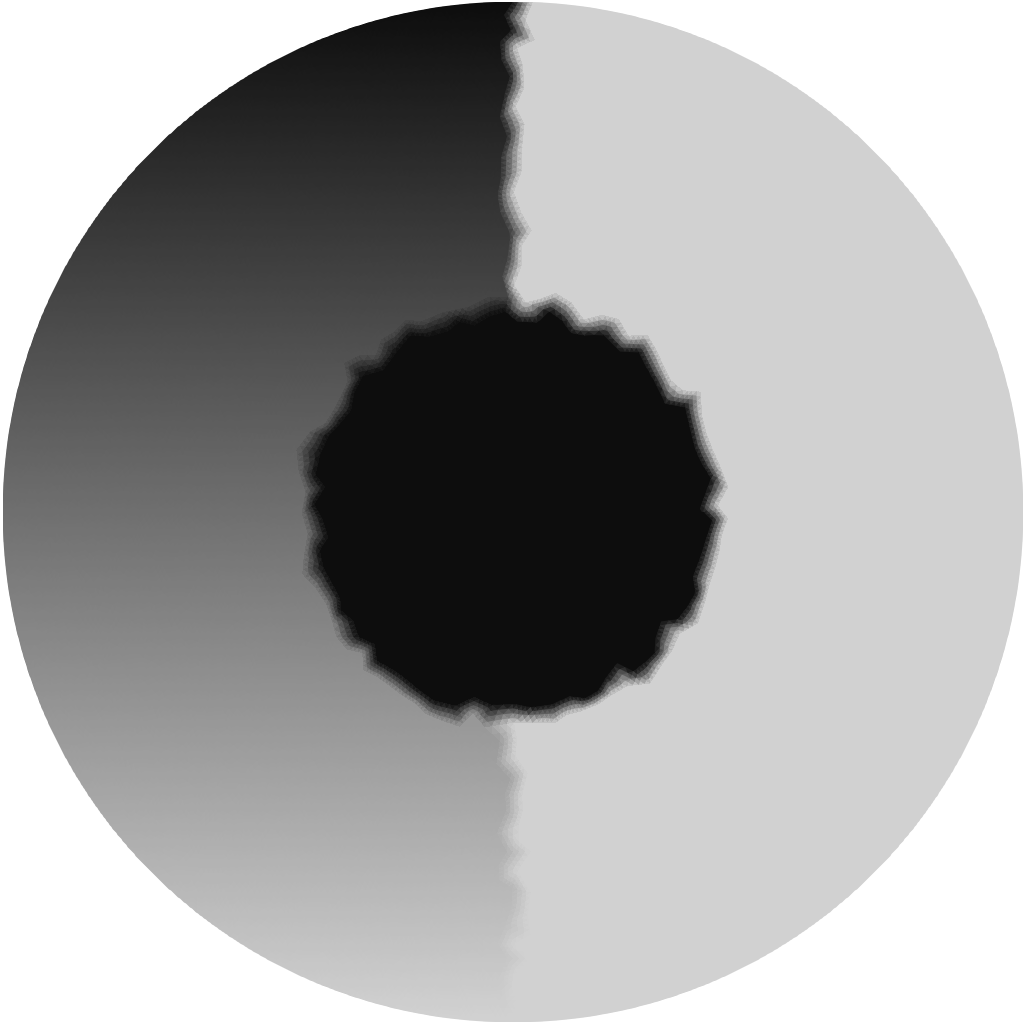}
		\hfill
		\includegraphics[width=0.32\linewidth]{./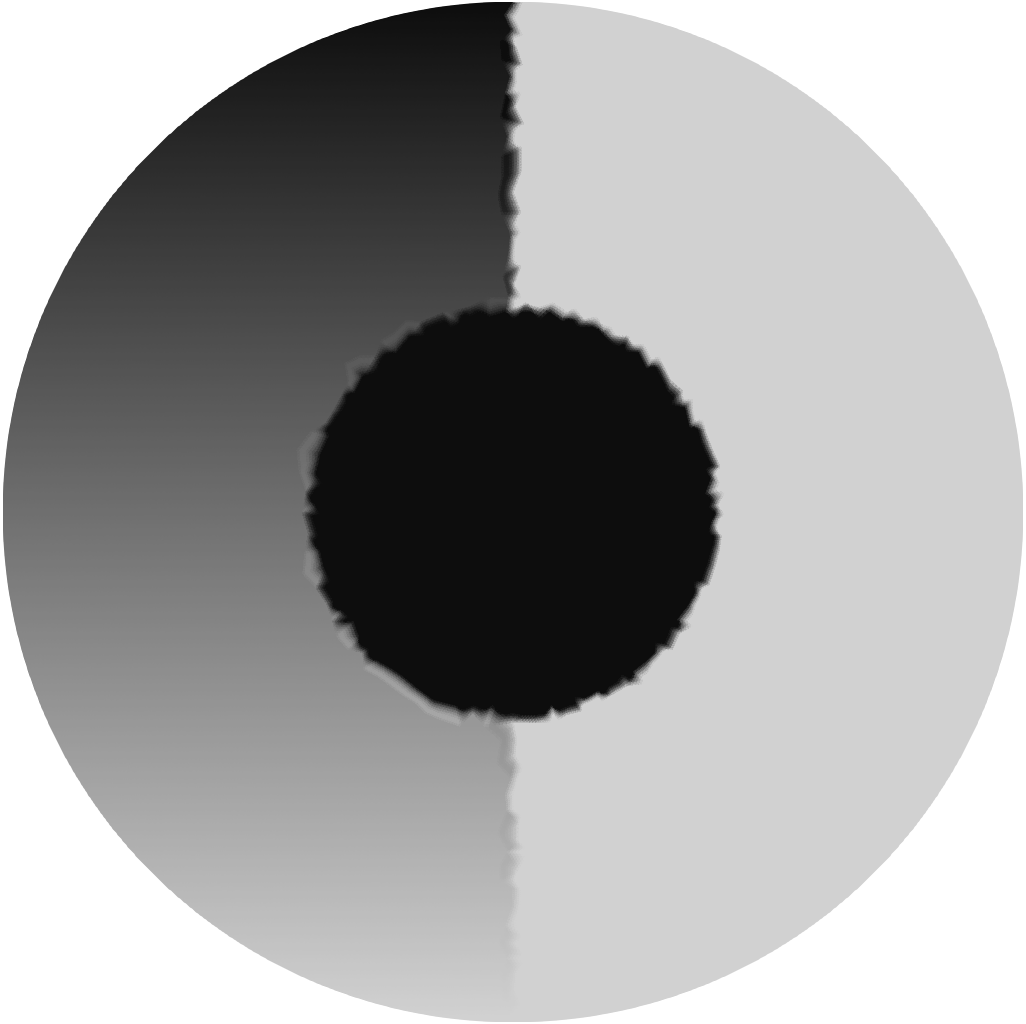}
		\\
		\includegraphics[width=0.32\linewidth]{./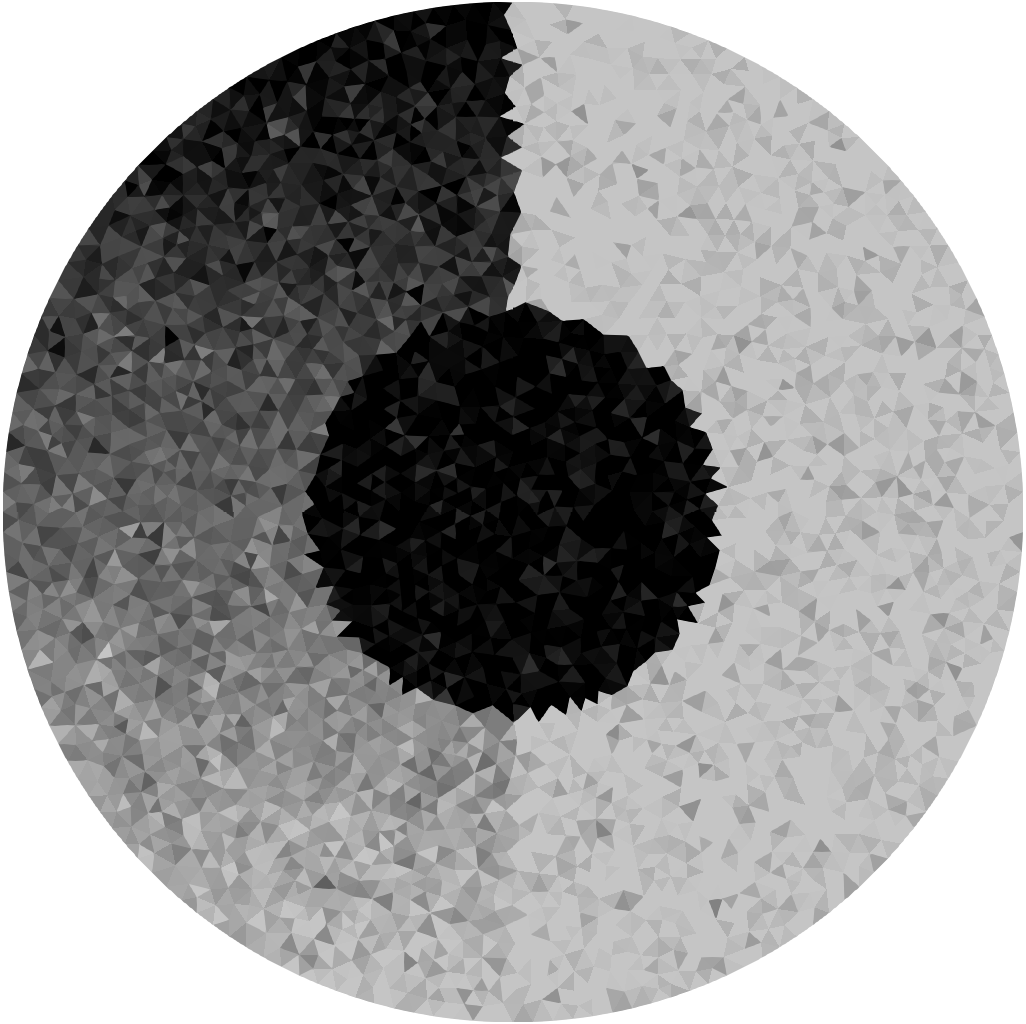}
		\hfill
		\includegraphics[width=0.32\linewidth]{./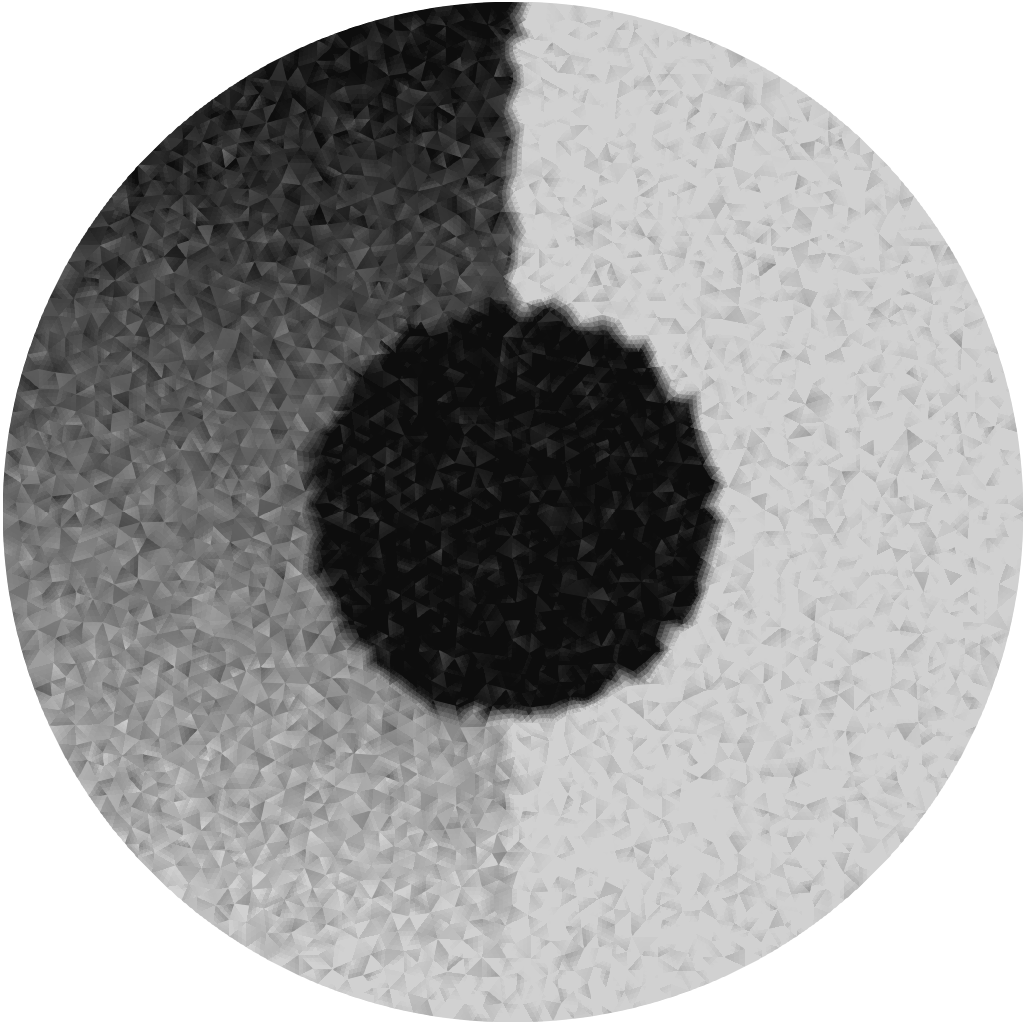}
		\hfill
		\includegraphics[width=0.32\linewidth]{./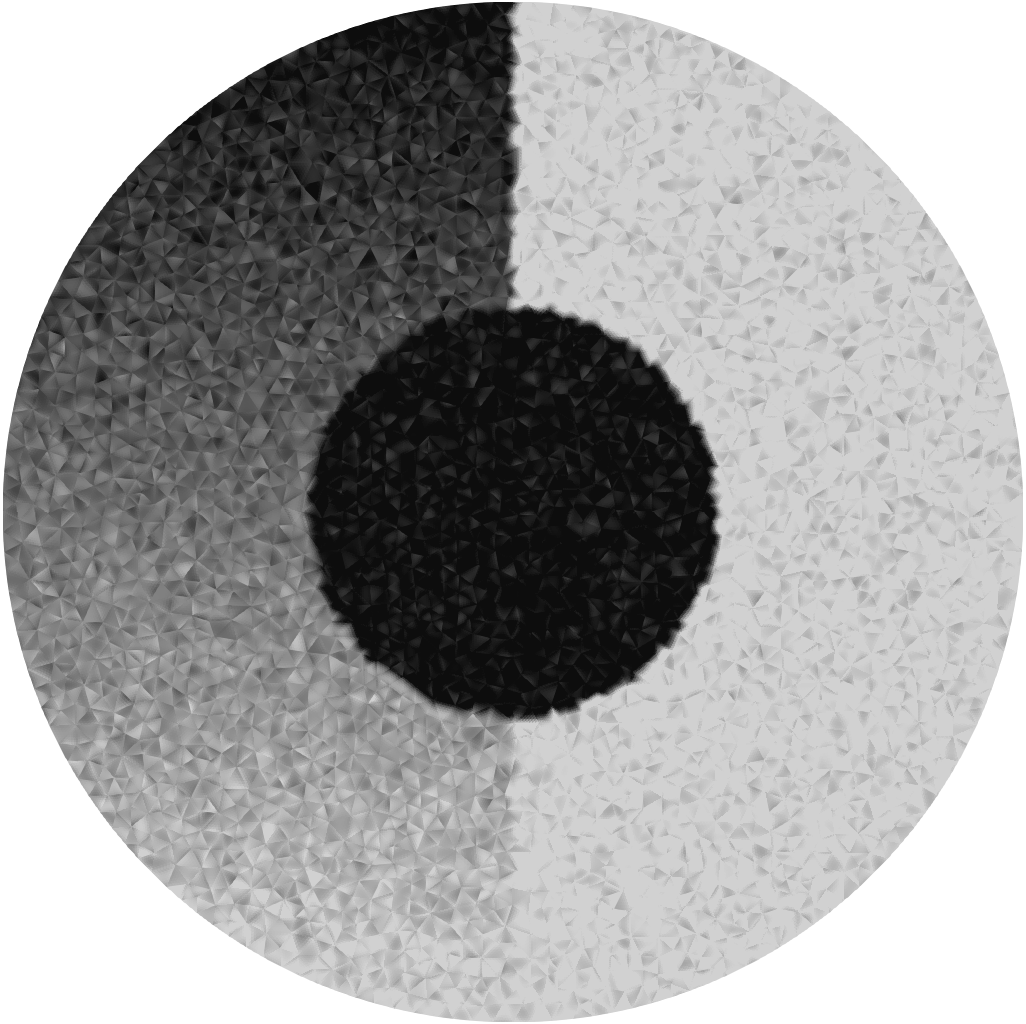}
		\\
		\includegraphics[width=0.32\linewidth]{./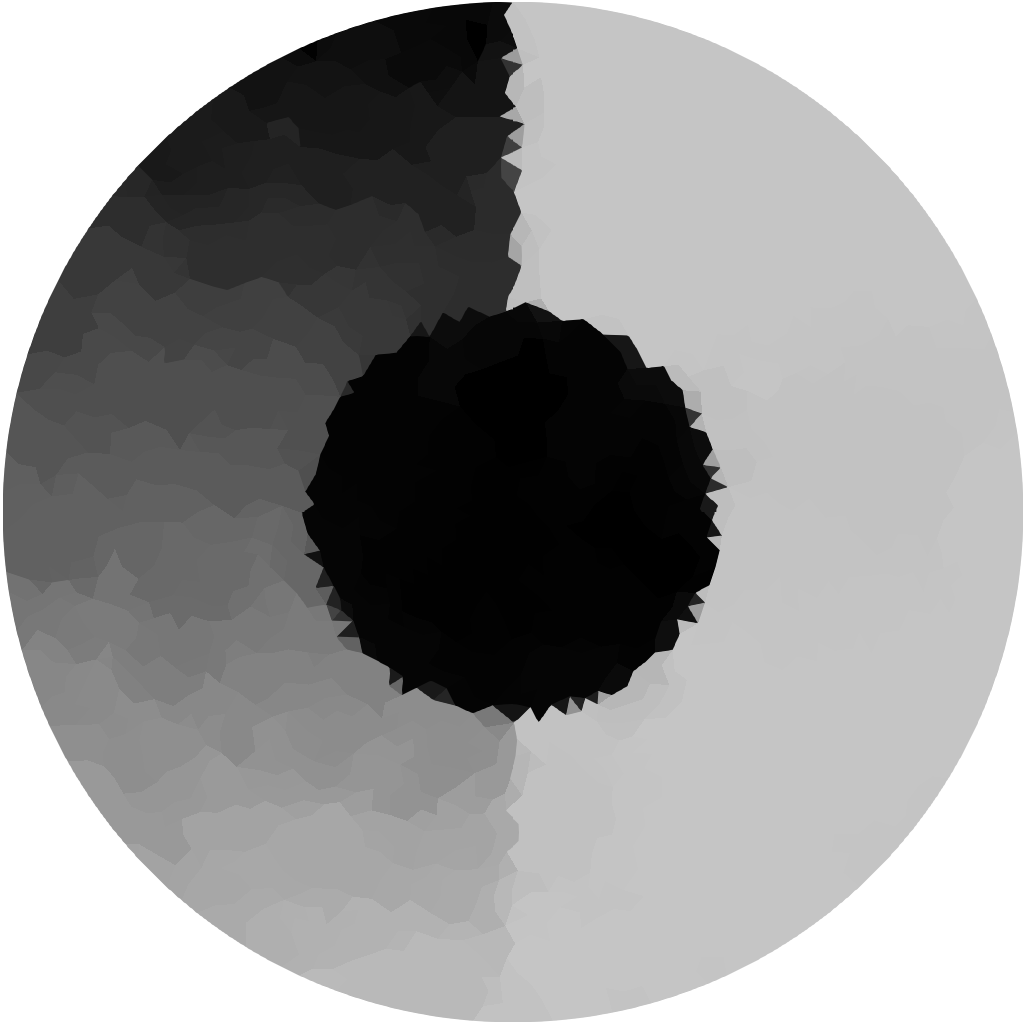}
		\hfill
		\includegraphics[width=0.32\linewidth]{./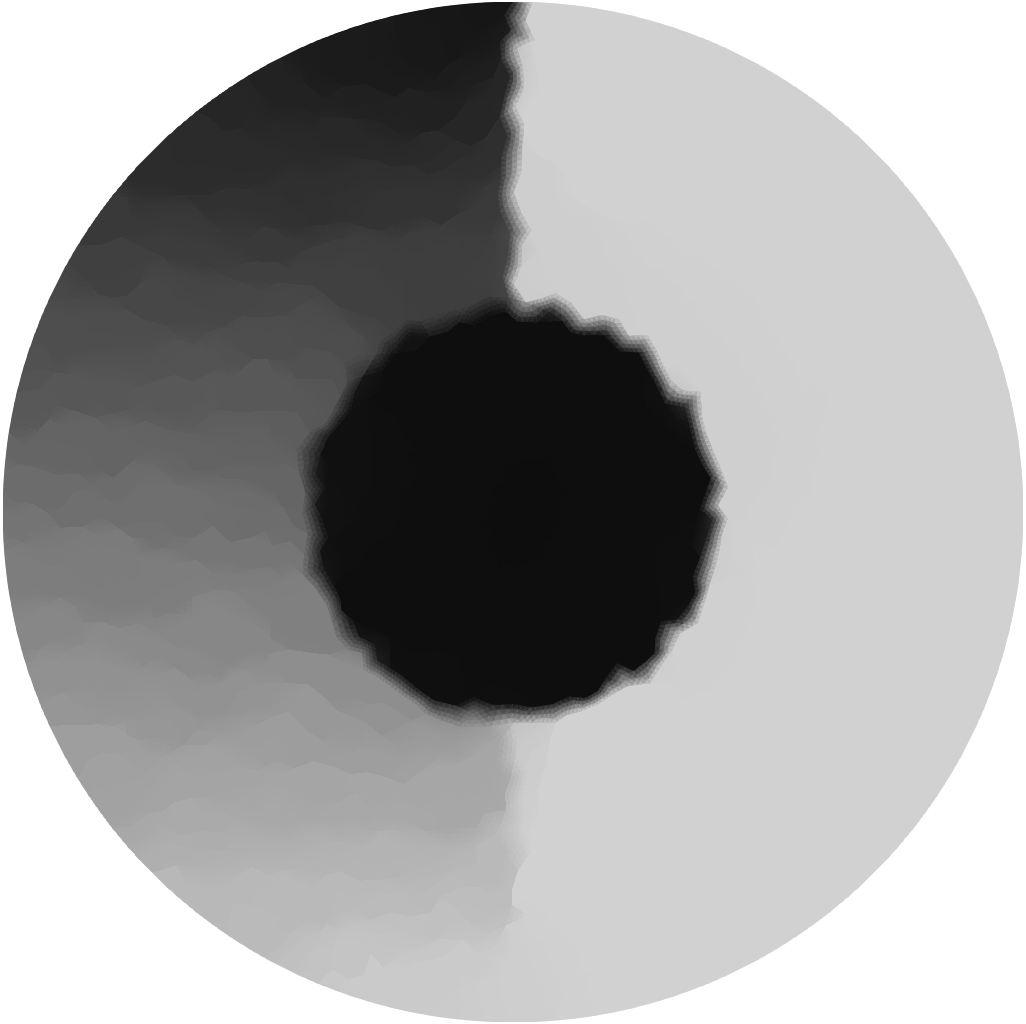}
		\hfill
		\includegraphics[width=0.32\linewidth]{./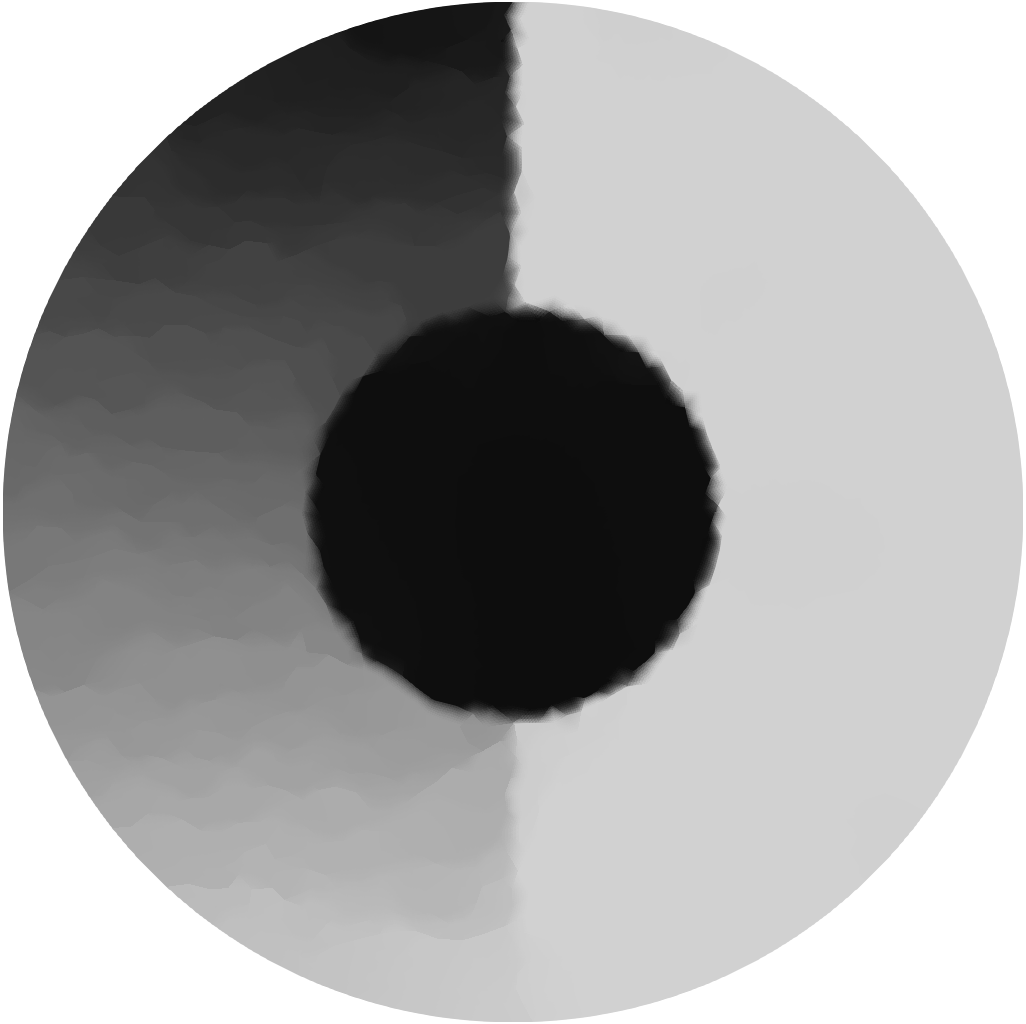}
	\end{center}
	\caption{Original, noisy and denoised images (top to bottom) for $\DG{0}$ (left column), $\DG{1}$ (middle column) and $\DG{2}$ (right column) for \eqref{eq:DTV-L2} with parameter $\beta = \num{1e-3}$ in the isotropic setting ($s = 2$). Results obtained using \cref{alg:split_Bregman_s_arbitrary} (split Bregman), see \cref{tab:data_test_1}. The results obtained by the Chambolle--Pock method are similar and not shown.}
	\label{fig:TestCase1}
\end{figure}

\begin{table*}[htbp]
	\sisetup{scientific-notation=false}
	\centering
	\ifthenelse{\boolean{ispreprint}}{\scriptsize}{}
	\begin{tabular}{@{}llrrll@{}}
		\toprule
		Space      & Algorithm                                                                                    & Iterations & Time [s]   & $\psnr$      & Objective \\
		\midrule
		$\DG{0}$   & Split Bregman ($\lambda = \num{1e-3}$)                                                       & \num{37}   & \num{1.6}  & \num{32.031} & \num{5.51e-3} \\
		& Chambolle-Pock ($\sigma = \num{0.016}, \tau = \num{1e-1}$)                                              & \num{128}  & \num{3.4}  & \num{31.987} & \num{5.51e-3} \\
		\midrule
		$\DG{1}$   & Split Bregman ($\lambda = \num{1e-3}$, $\scaling = \num{1e-2}$)                              & \num{57}   & \num{5.8}  & \num{36.092} & \num{3.46e-3} \\
		& Chambolle-Pock ($\sigma = \num{0.025}, \tau = \num{1e-2}$, $\theta = \num{1}$, $\scaling = \num{1e-2}$) & \num{91}   & \num{6.7}  & \num{33.480} & \num{3.66e-3} \\
		\midrule
		$\DG{2}$   & Split Bregman ($\lambda = \num{1e-3}$, $\scaling = \num{1e-2}$)                              & \num{41}   & \num{9.3}  & \num{31.896} & \num{4.14e-3} \\
		& Chambolle-Pock ($\sigma = \num{0.030}, \tau = \num{1e-3}$, $\theta = \num{1}$, $\scaling = \num{1e-2}$) & \num{223}  & \num{35.1} & \num{31.066} & \num{4.32e-3} \\
		\midrule 
		\bottomrule \\
	\end{tabular}
	\caption{Comparison of the performance of \cref{alg:split_Bregman_s_arbitrary,alg:Chambolle-Pock_s_arbitrary} for the denoising problem shown in~\cref{fig:TestCase1} in various discretizations.
	}
	\label{tab:data_test_1}
\end{table*}

\added{\Cref{fig:TestCase1} visualizes the benefits of higher-order finite elements in particular in the case where the discontinuities in the image are not resolved by the computational mesh.
	In addition, the $\DG{1}$ and $\DG{2}$ solutions exhibit less staircasing.
	Further evidence for the benefits of higher-order polynomial spaces for the cameraman test image is given in \cref{subsec:denoising_on_low_resolution_meshes}.
}

\added{Before continuing, we mention that all results in $\DG{1}$ were interpolated onto $\DG{0}$ on a twice refined mesh merely for visualization since $\DG{1}$ functions cannot directly be displayed in \paraview. 
Likewise, results in $\DG{2}$ were interpolated onto $\DG{0}$ on a three times refined mesh for visualization.}

\subsection{Comparison to $\DG{0}$ Image Denoising on Pixel Grids}
\label{subsec:Comparison_to_pixel_grids}

\added{In this section we provide a comparison of our approach, using $\DG{r}$ representations of an image for $r \in \{0,1,2\}$ and the discrete problem \eqref{eq:DTV-L2}, with the classical representation by constant pixels.
	We refer to the latter as $\DG{0}$ on pixels.
	In this example, we use the discrete cameraman test image on a $256 \times 256$ pixel grid.
For the finite element spaces, each pixel is refined into four triangles with crossed diagonals.}

\added{For this problem we do not expect higher-order discretization to be particularly beneficial since the 'original' image data is only piecewise constant itself.
	In addition, we cannot directly compare run-times since the $\DG{0}$ pixel problem was solved with an implementation of the split Bregman method in \matlab, since \fenics\ does not support all function spaces on quadraliteral meshes.
In any case, the same starting guess and stopping criterion \eqref{stopping_criterion} was used in each case.}

\added{The denoising results are shown in \cref{fig:TestCase2} and the convergence behavior of the split Bregman method is displayed in \cref{tab:data_test_2}.}

\begin{figure}[htbp]
	\begin{center}
		\hfill
		\ifthenelse{\boolean{ispreprint}}{\includegraphics[width=0.30\linewidth]{./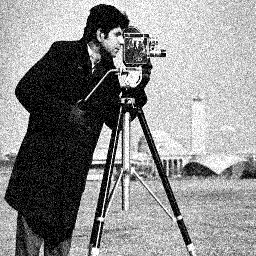}}{\includegraphics[width=0.48\linewidth]{./pix/Test2/T2_SB_DG0pixel_1.png}}
		\hfill
		\ifthenelse{\boolean{ispreprint}}{\includegraphics[width=0.30\linewidth]{./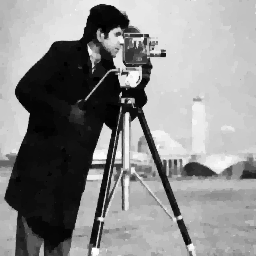}}{\includegraphics[width=0.48\linewidth]{./pix/Test2/T2_SB_DG0pixel_2.png}}
		\hspace*{\fill}
		\\
		\hfill
		\ifthenelse{\boolean{ispreprint}}{\includegraphics[width=0.30\linewidth]{./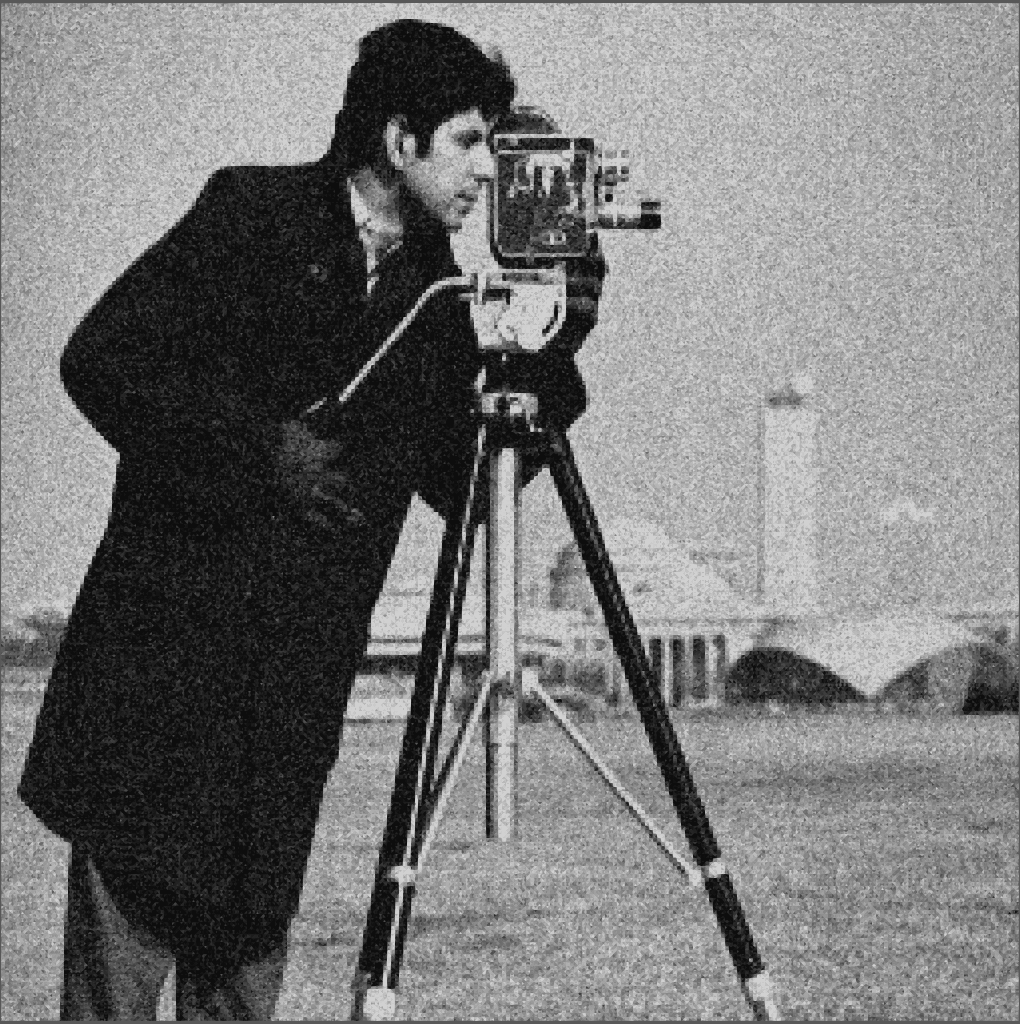}}{\includegraphics[width=0.48\linewidth]{./pix/Test2/T2_SB_DG0_1.png}}
		\hfill
		\ifthenelse{\boolean{ispreprint}}{\includegraphics[width=0.30\linewidth]{./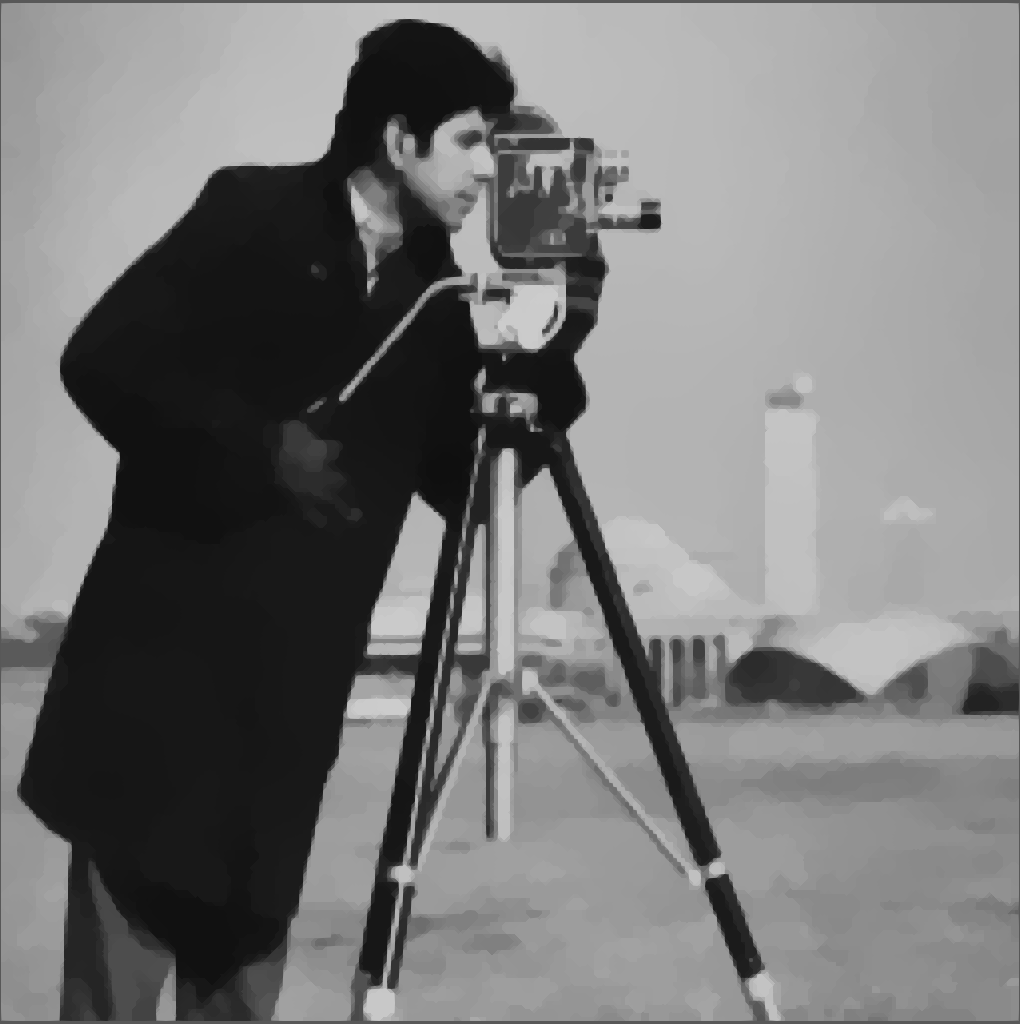}}{\includegraphics[width=0.48\linewidth]{./pix/Test2/T2_SB_DG0_2.png}}
		\hspace*{\fill}
		\\
		\hfill
		\ifthenelse{\boolean{ispreprint}}{\includegraphics[width=0.30\linewidth]{./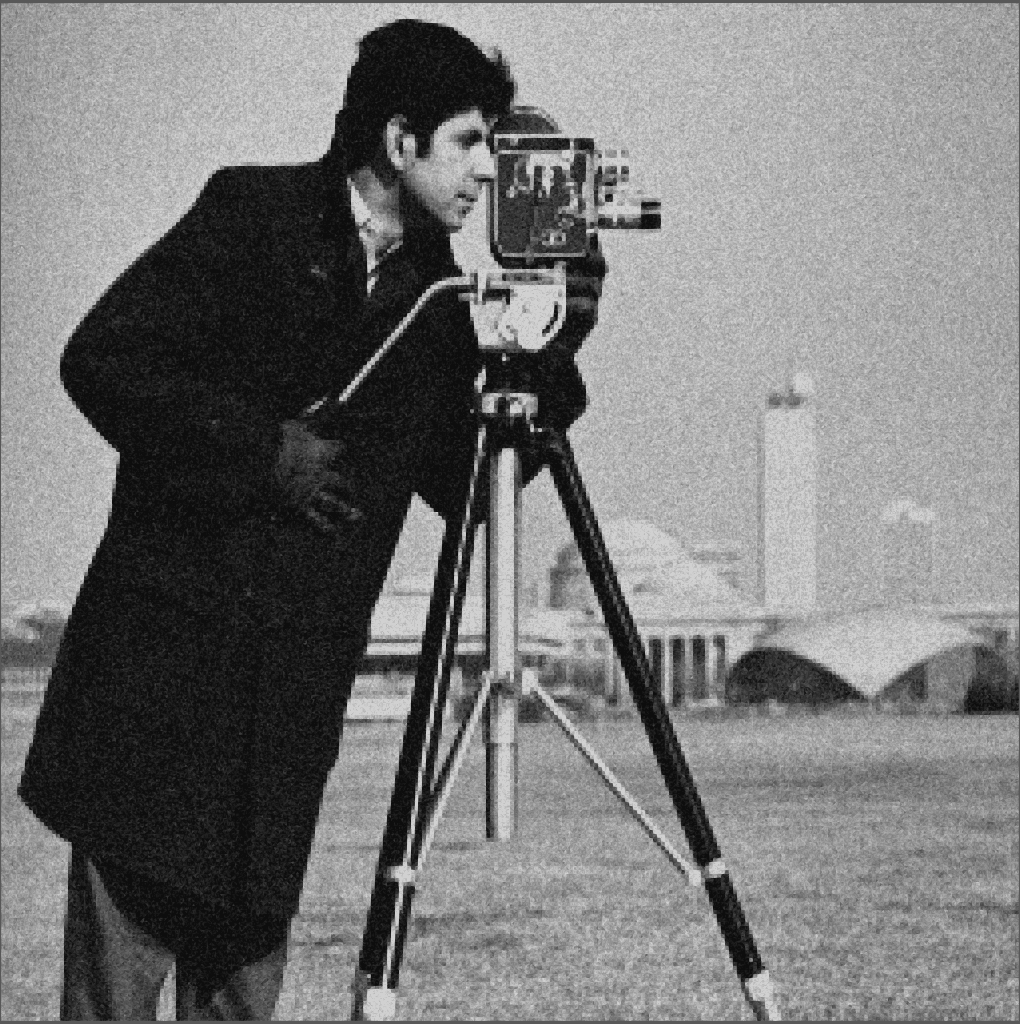}}{\includegraphics[width=0.48\linewidth]{./pix/Test2/T2_SB_DG1_1.png}}
		\hfill
		\ifthenelse{\boolean{ispreprint}}{\includegraphics[width=0.30\linewidth]{./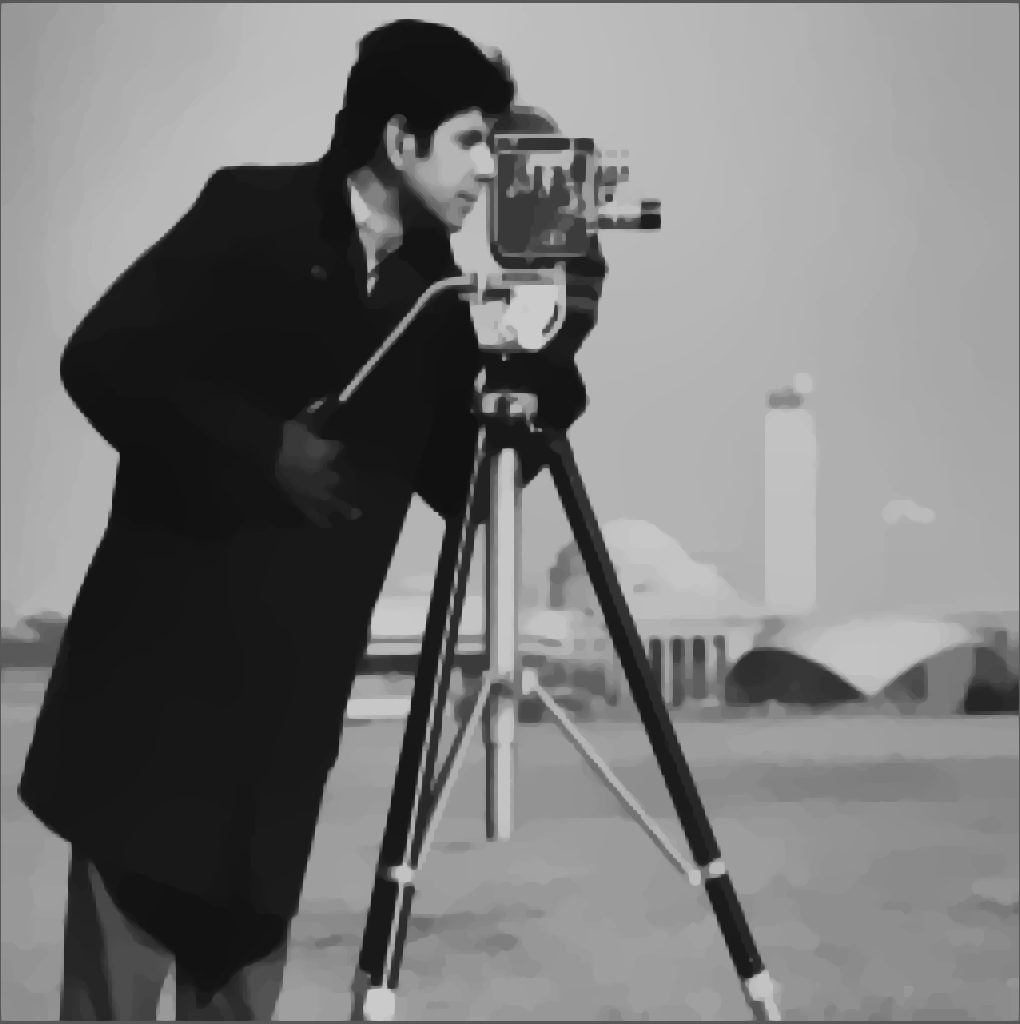}}{\includegraphics[width=0.48\linewidth]{./pix/Test2/T2_SB_DG1_2.png}}
		\hspace*{\fill}
		\\
		\hfill
		\ifthenelse{\boolean{ispreprint}}{\includegraphics[width=0.30\linewidth]{./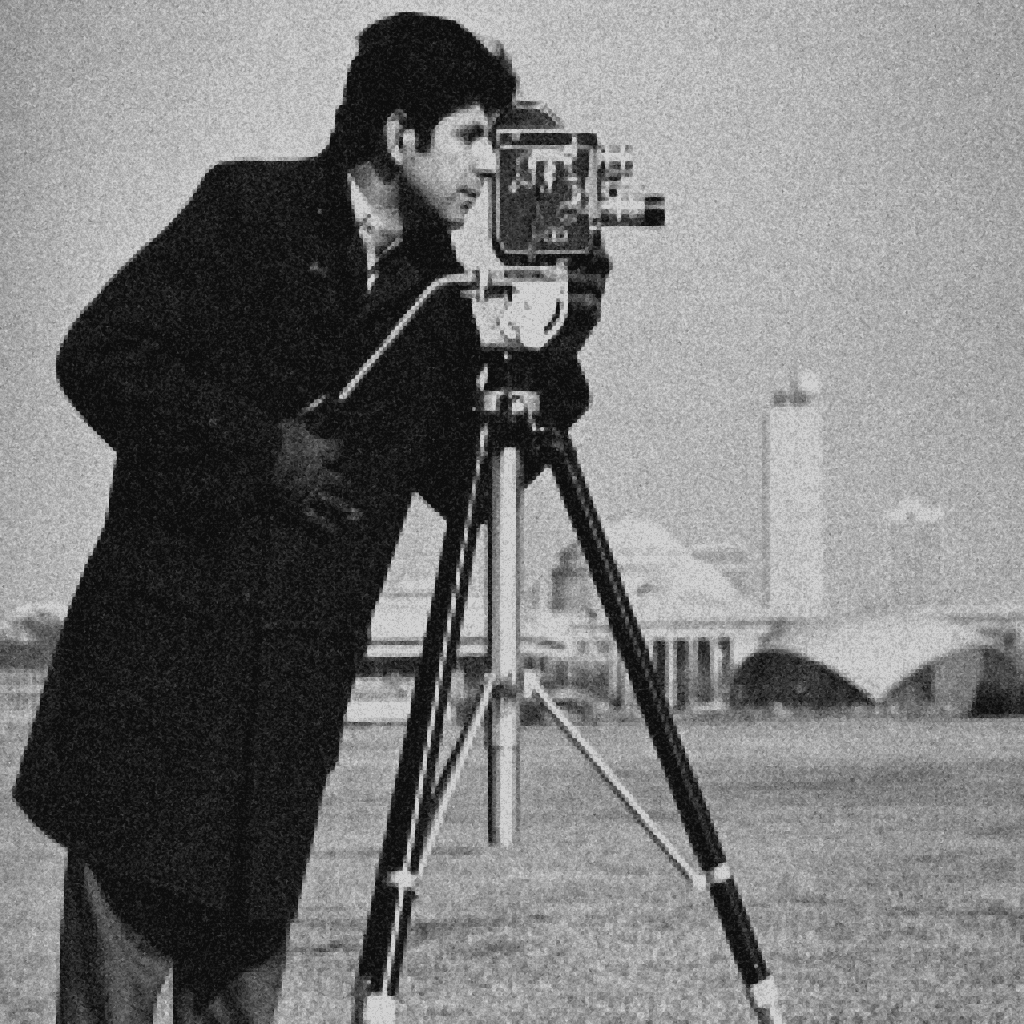}}{\includegraphics[width=0.48\linewidth]{./pix/Test2/T2_SB_DG2_1.png}}
		\hfill
		\ifthenelse{\boolean{ispreprint}}{\includegraphics[width=0.30\linewidth]{./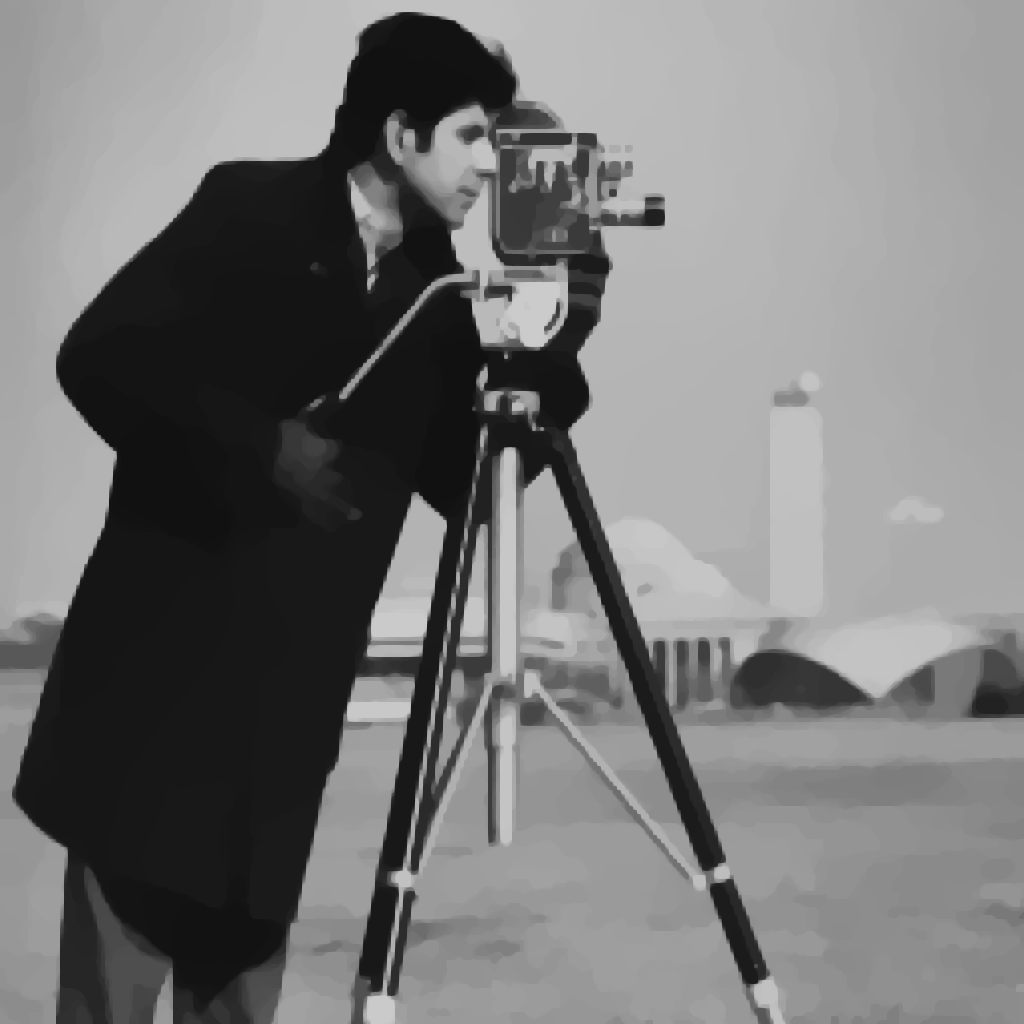}}{\includegraphics[width=0.48\linewidth]{./pix/Test2/T2_SB_DG2_2.png}}
		\hspace*{\fill}
		\\
	\end{center}
	\caption{Noisy (left) and denoised (right) images for classical $\DG{0}$ on pixels (top row), and finite element solutions in $\DG{0}$ (second row), $\DG{1}$ (third row) and $\DG{2}$ (bottom row) for \eqref{eq:DTV-L2} with parameter $\beta = \num{3e-4}$ in the isotropic setting ($s = 2$). Results obtained using \cref{alg:split_Bregman_s_arbitrary} (split Bregman), see \cref{tab:data_test_2}.}
	\label{fig:TestCase2}
\end{figure}

\begin{table*}[htbp]
	\sisetup{scientific-notation=false}
	\centering
	\ifthenelse{\boolean{ispreprint}}{\scriptsize}{}
	\begin{tabular}{@{}llrrll@{}}
		\toprule
		Space              & Algorithm                                                       & Iterations & Time [s]       & $\psnr$      & Objective \\
		\midrule
		$\DG{0}$ on pixels & Split Bregman ($\lambda = \num{1e-2}$)                          & \num{24}   & \num{2.8}      & \num{26.236} & \num{8.58e-3} \\
		\midrule
		$\DG{0}$           & Split Bregman ($\lambda = \num{1e-2}$)                          & \num{32}   & \num{49.1}     & \num{26.641} & \num{8.75e-3} \\
		\midrule
		$\DG{1}$           & Split Bregman ($\lambda = \num{1e-2}$, $\scaling = \num{1e-2}$) & \num{63}   & \num{516.6}    & \num{26.882} & \num{6.30e-3} \\
		\midrule
		$\DG{2}$           & Split Bregman ($\lambda = \num{1e-2}$, $\scaling = \num{1e-2}$) & \num{138}  & \num{3610.1}   & \num{26.911} & \num{6.94e-3} \\
		\midrule 
		\bottomrule \\
	\end{tabular}
	\caption{Comparison of the performance of \cref{alg:split_Bregman_s_arbitrary} (split Bregman) for the denoising problem shown in~\cref{fig:TestCase2} in various discretizations.}
	\label{tab:data_test_2}
\end{table*}

\subsection{Denoising of Low-Resolution Images}
\label{subsec:denoising_on_low_resolution_meshes}

\added{In this section we consider a low resolution of the cameraman image, which was obtained by interpolating the $256 \times 256$ pixel image onto a $64 \times 64$~square pixel grid with crossed diagonals.
	\added{Again, noise is added per coefficient in the respective space.}
	Subsequently the denoising problem is solved in the $\DG{r}(\Omega)$ spaces for $r \in \{0,1,2\}$ on the coarse grid.
	The goal is to demonstrate that the use of higher-order polynomial functions can partially compensate the loss of geometric resolution.
	In \cref{fig:TestCase3} we show the results obtained using the split Bregman method, whose performance was similar as in \cref{subsec:denoising_of_DGr_in_DGr}, as can be seen in \cref{tab:data_test_3}. 
The $\psnr$ values were evaluated using the full resolution image as $\uref$.}

\added{As can be seen from the results in \cref{fig:TestCase3} and \cref{tab:data_test_3}, the recovered image in $\DG{2}(\Omega)$, see \cref{fig:TestCase3}~(bottom right), exceeds the $\DG{0}$ image both in visual quality and PSNR value.}

\begin{figure}[htbp]
	\begin{center}
		\hfill
		\ifthenelse{\boolean{ispreprint}}{\includegraphics[width=0.35\linewidth]{./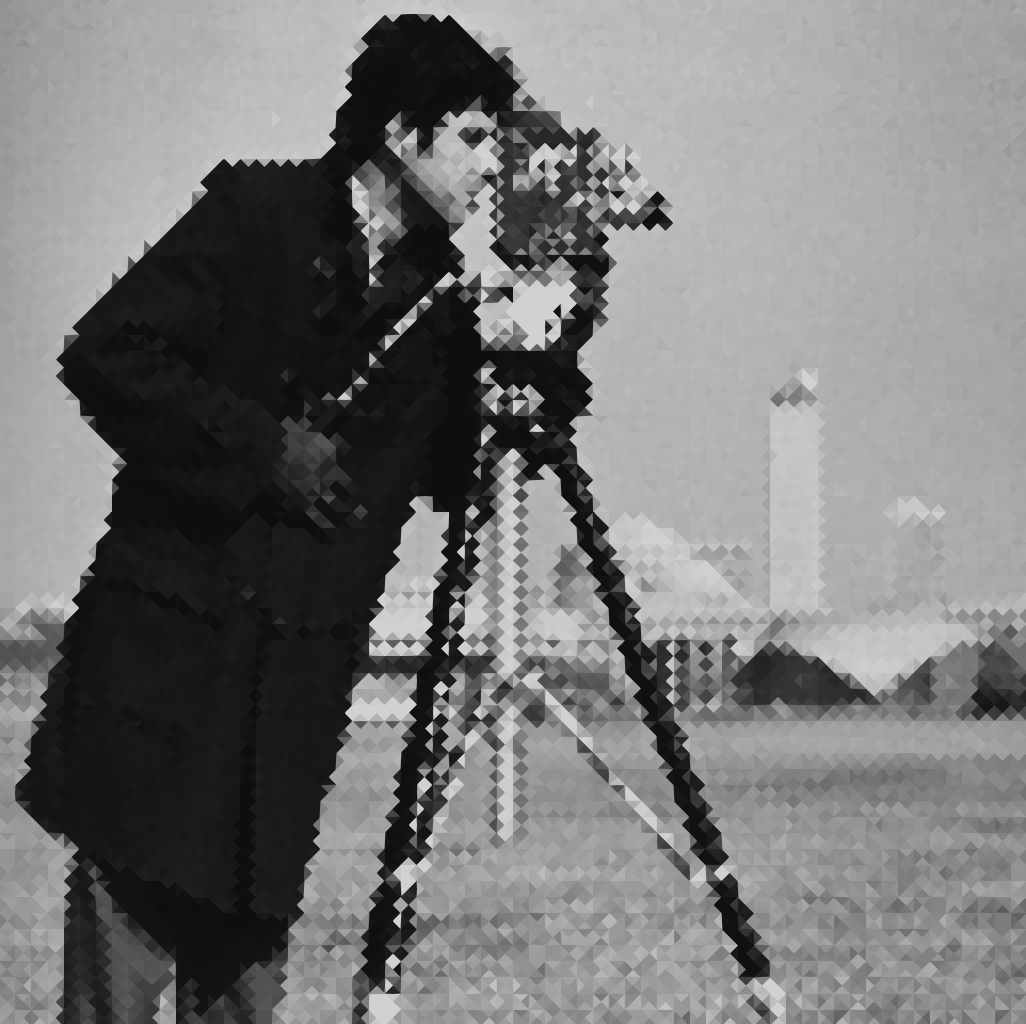}}{\includegraphics[width=0.48\linewidth]{./pix/Test3/T3_SB_DG0_1.png}}
		\hfill
		\ifthenelse{\boolean{ispreprint}}{\includegraphics[width=0.35\linewidth]{./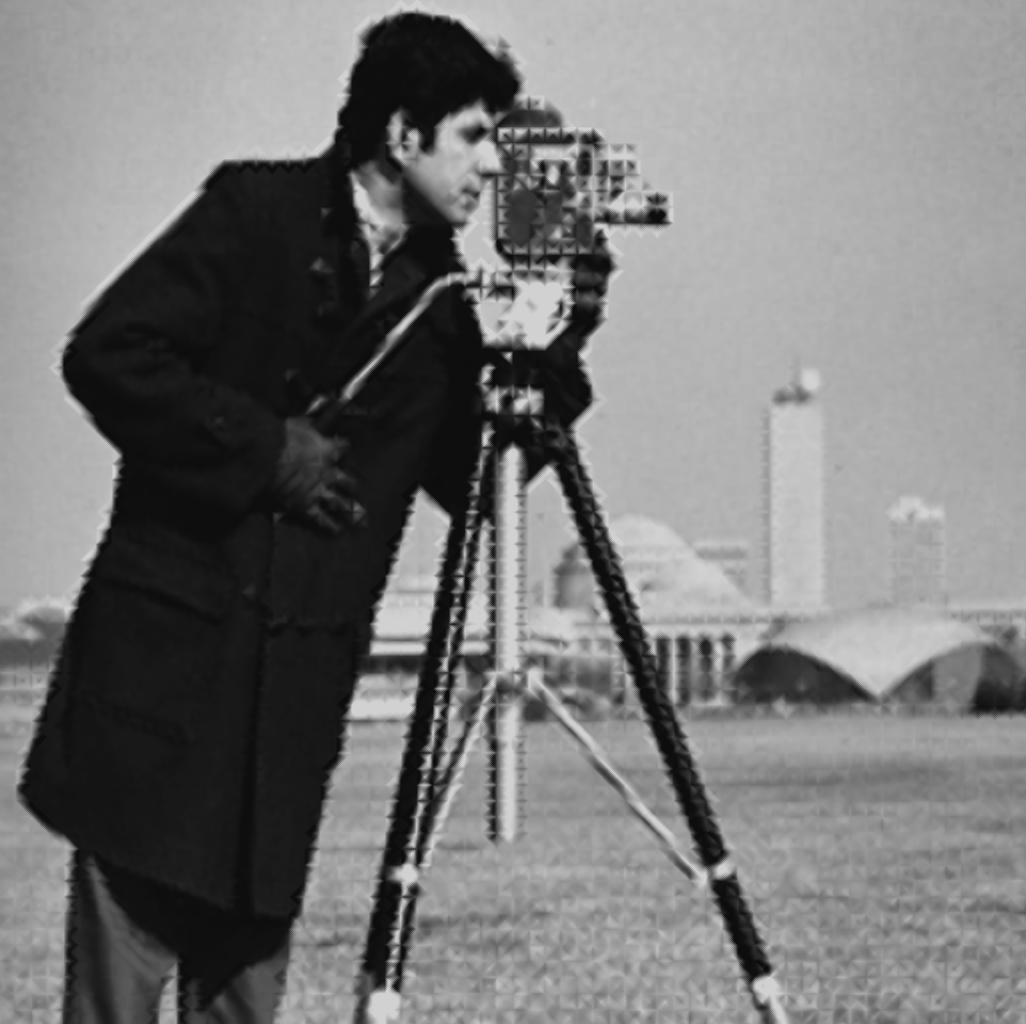}}{\includegraphics[width=0.48\linewidth]{./pix/Test3/T3_SB_DG2_1.png}}
		\hspace*{\fill}
		\\
		\hfill
		\ifthenelse{\boolean{ispreprint}}{\includegraphics[width=0.35\linewidth]{./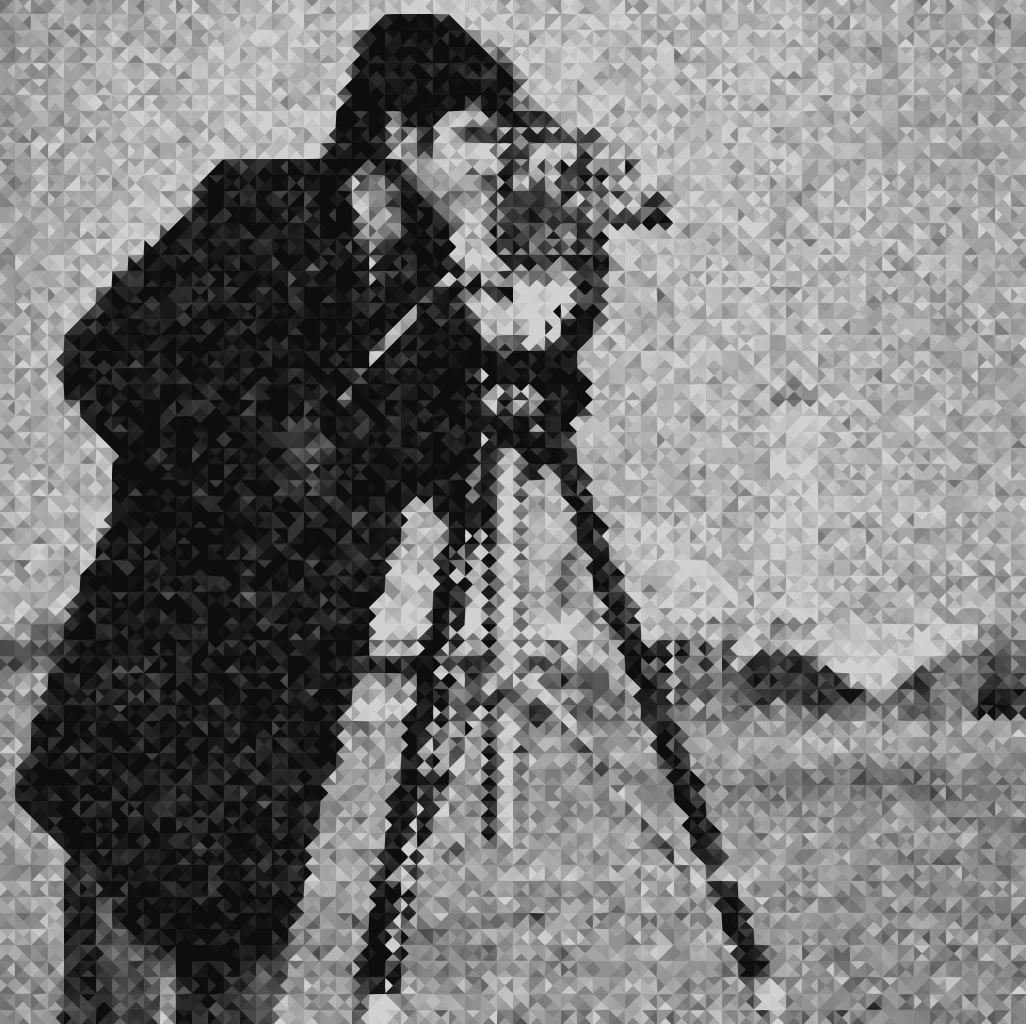}}{\includegraphics[width=0.48\linewidth]{./pix/Test3/T3_SB_DG0_2.png}}
		\hfill
		\ifthenelse{\boolean{ispreprint}}{\includegraphics[width=0.35\linewidth]{./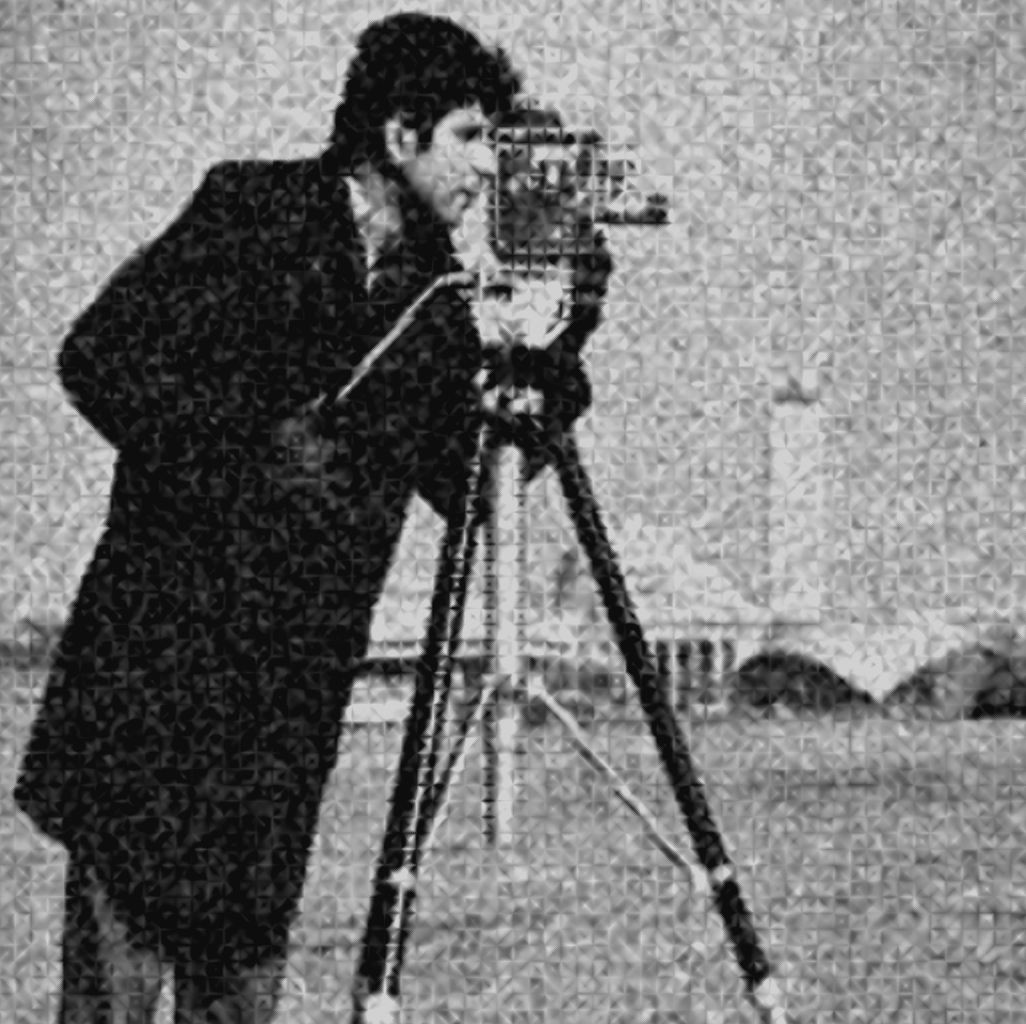}}{\includegraphics[width=0.48\linewidth]{./pix/Test3/T3_SB_DG2_2.png}}
		\hspace*{\fill}
		\\
		\hfill
		\ifthenelse{\boolean{ispreprint}}{\includegraphics[width=0.35\linewidth]{./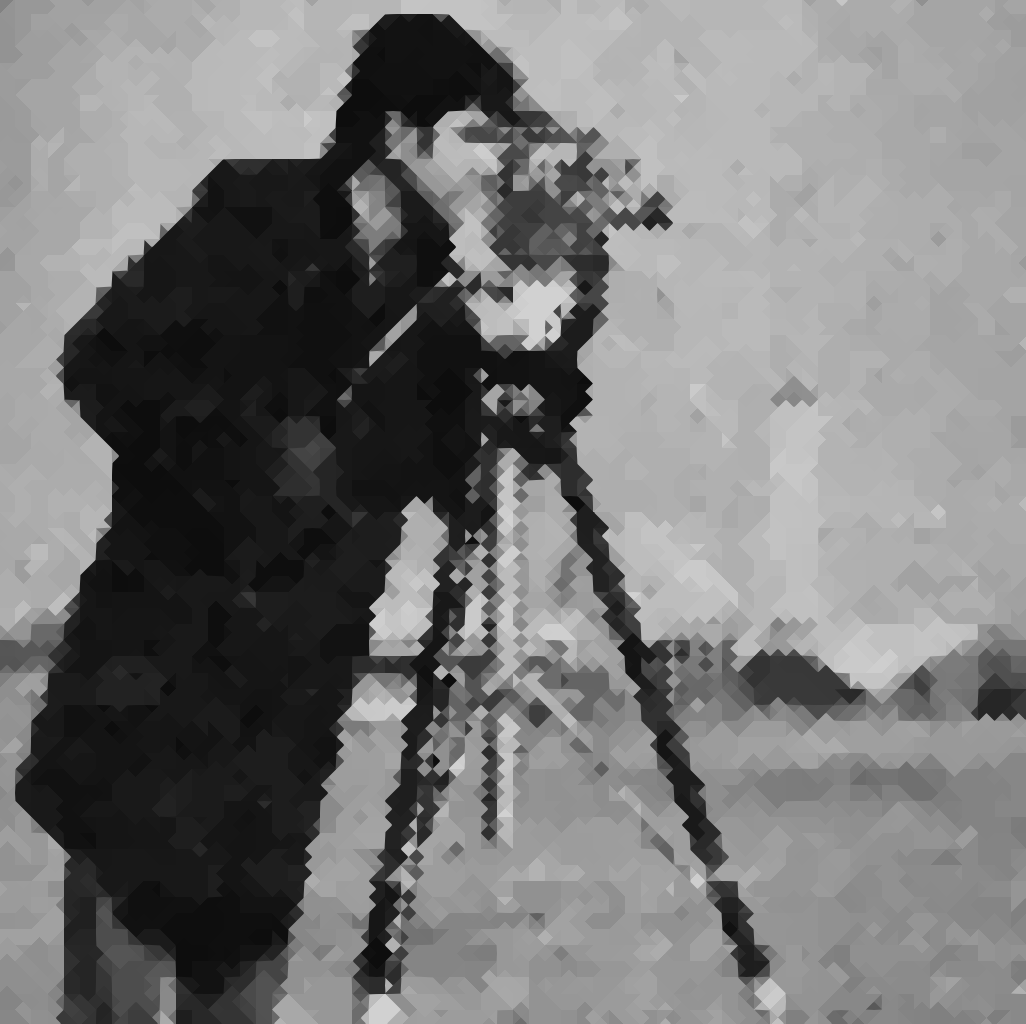}}{\includegraphics[width=0.48\linewidth]{./pix/Test3/T3_SB_DG0_3.png}}
		\hfill
		\ifthenelse{\boolean{ispreprint}}{\includegraphics[width=0.35\linewidth]{./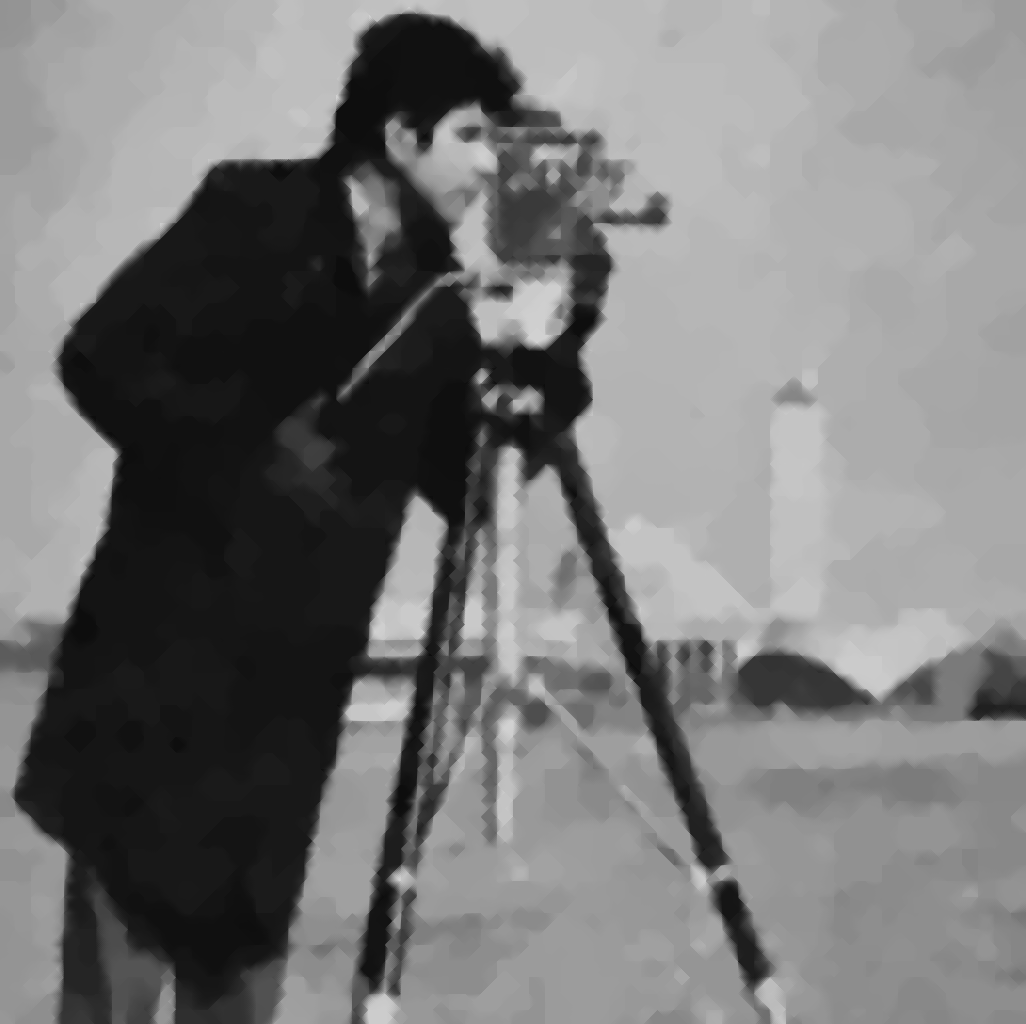}}{\includegraphics[width=0.48\linewidth]{./pix/Test3/T3_SB_DG2_3.png}}
		\hspace*{\fill}
	\end{center}
	\caption{Original (interpolated), noisy and denoised images (top to bottom) for $\DG{0}$ (left column) and $\DG{2}$ (right column) for \eqref{eq:DTV-L2} with parameter $\beta = \num{0.0004}$ for the isotropic setting ($s = 2$) on a coarse grid. Results obtained using \cref{alg:split_Bregman_s_arbitrary} (split Bregman), see \cref{tab:data_test_3}.}
	\label{fig:TestCase3}
\end{figure}

\begin{table*}[htbp]
	\sisetup{scientific-notation=false}
	\centering
	\begin{tabular}{@{}llrrll@{}}
		\toprule
		Space      & Algorithm                                                       & Iterations & Time [s]   & $\psnr$      & Objective \\
		\midrule
		$\DG{0}$   & Split Bregman ($\lambda = \num{1e-2}$)                          & \num{20}   & \num{6.3}  & \num{19.333} & \num{8.97e-3} \\
		\midrule
		$\DG{2}$   & Split Bregman ($\lambda = \num{1e-2}$, $\scaling = \num{1e-2}$) & \num{101}  & \num{84.3} & \num{20.855} & \num{7.18e-3} \\
		\midrule 
		\bottomrule \\
	\end{tabular}
	\caption{Performance of \cref{alg:split_Bregman_s_arbitrary} (split Bregman) for the low-resolution denoising problem shown in~\cref{fig:TestCase3} in various discretizations.}
	\label{tab:data_test_3}
\end{table*}

\subsection{Inpainting of $\DG{r}$-Images}
\label{subsec:Inpainting}

\added{In this and the following section we demonstrate the utility of higher-order polynomial function spaces for the purpose of denoising and inpainting.
	To this end, we consider the non-discrete 'ball' image and randomly delete two thirds of all cells, which subsequently serve as the inpainting region $\Omega \setminus \Omega_0$.
	Noise is added to the remaining data and problem \eqref{eq:DTV-L2} solved in $\DG{r}(\Omega)$ for $r \in \{0,1,2\}$; see \cref{fig:TestCase4}.
	For this test, we found the Chambolle--Pock method (\cref{alg:Chambolle-Pock_s_arbitrary}) to perform better than split Bregman; see \cref{tab:data_test_4}.
}

\begin{figure}[htbp]
	\begin{center}
		\hfill
		\ifthenelse{\boolean{ispreprint}}{\includegraphics[width=0.35\linewidth]{./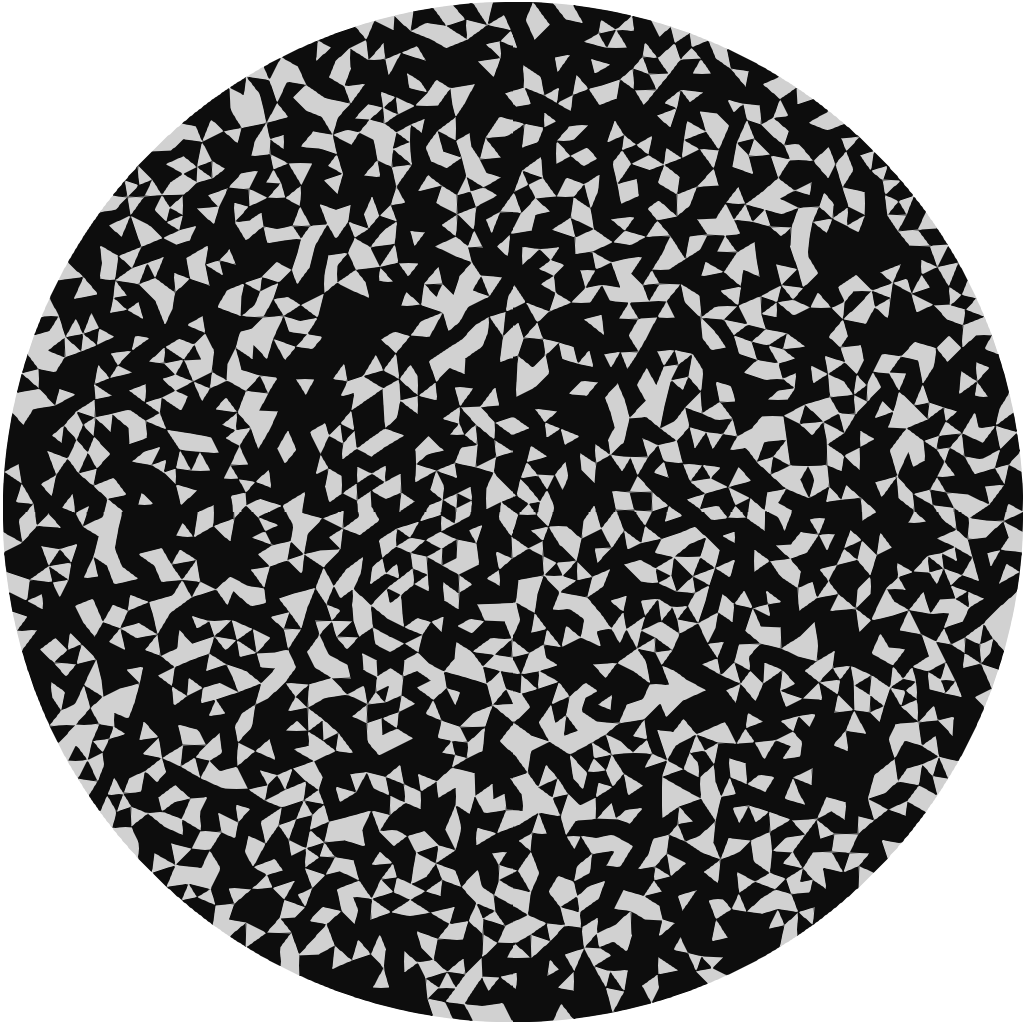}}{\includegraphics[width=0.48\linewidth]{./pix/Test4/Inpainting_region.png}}
		\hfill
		\ifthenelse{\boolean{ispreprint}}{\includegraphics[width=0.35\linewidth]{./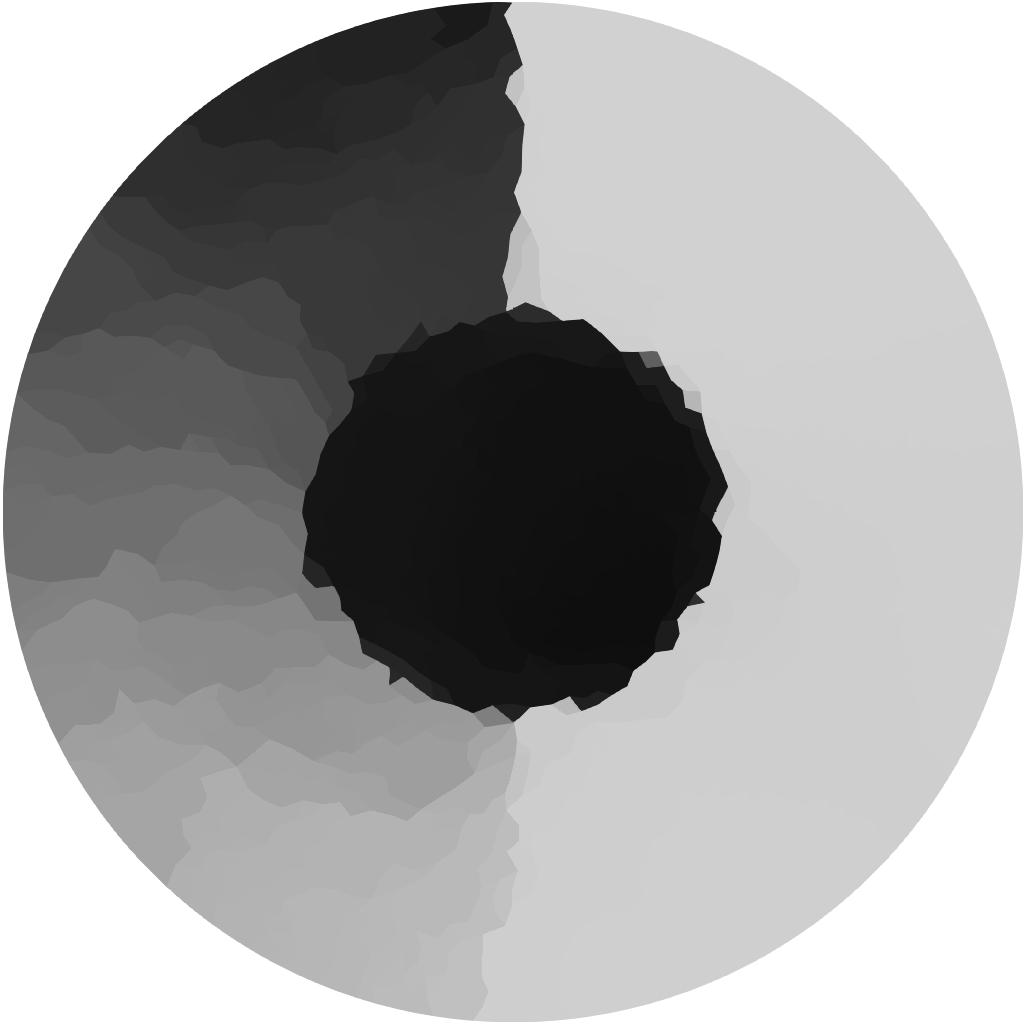}}{\includegraphics[width=0.48\linewidth]{./pix/Test4/T4_CP_DG0_1.png}}
		\hspace*{\fill}
		\\
		\hfill
		\ifthenelse{\boolean{ispreprint}}{\includegraphics[width=0.35\linewidth]{./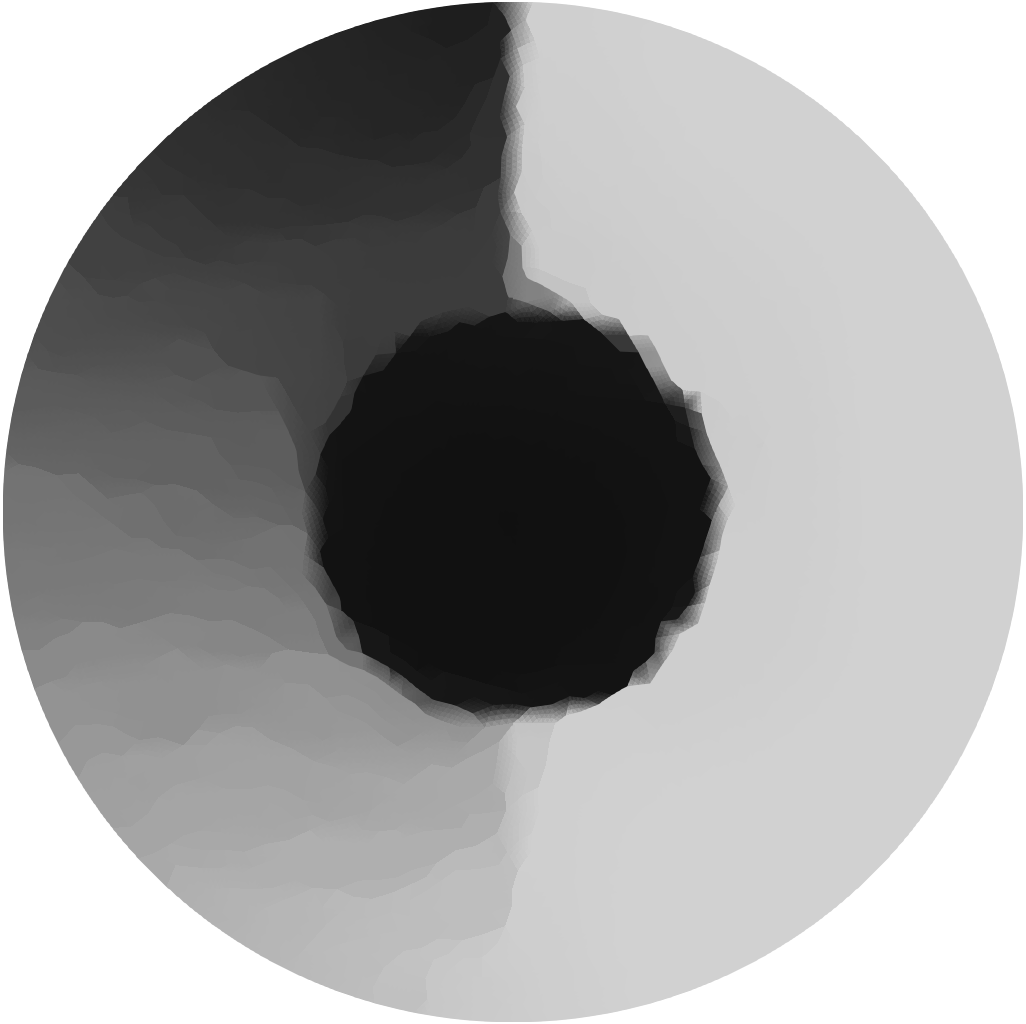}}{\includegraphics[width=0.48\linewidth]{./pix/Test4/T4_CP_DG1_1.png}}
		\hfill
		\ifthenelse{\boolean{ispreprint}}{\includegraphics[width=0.35\linewidth]{./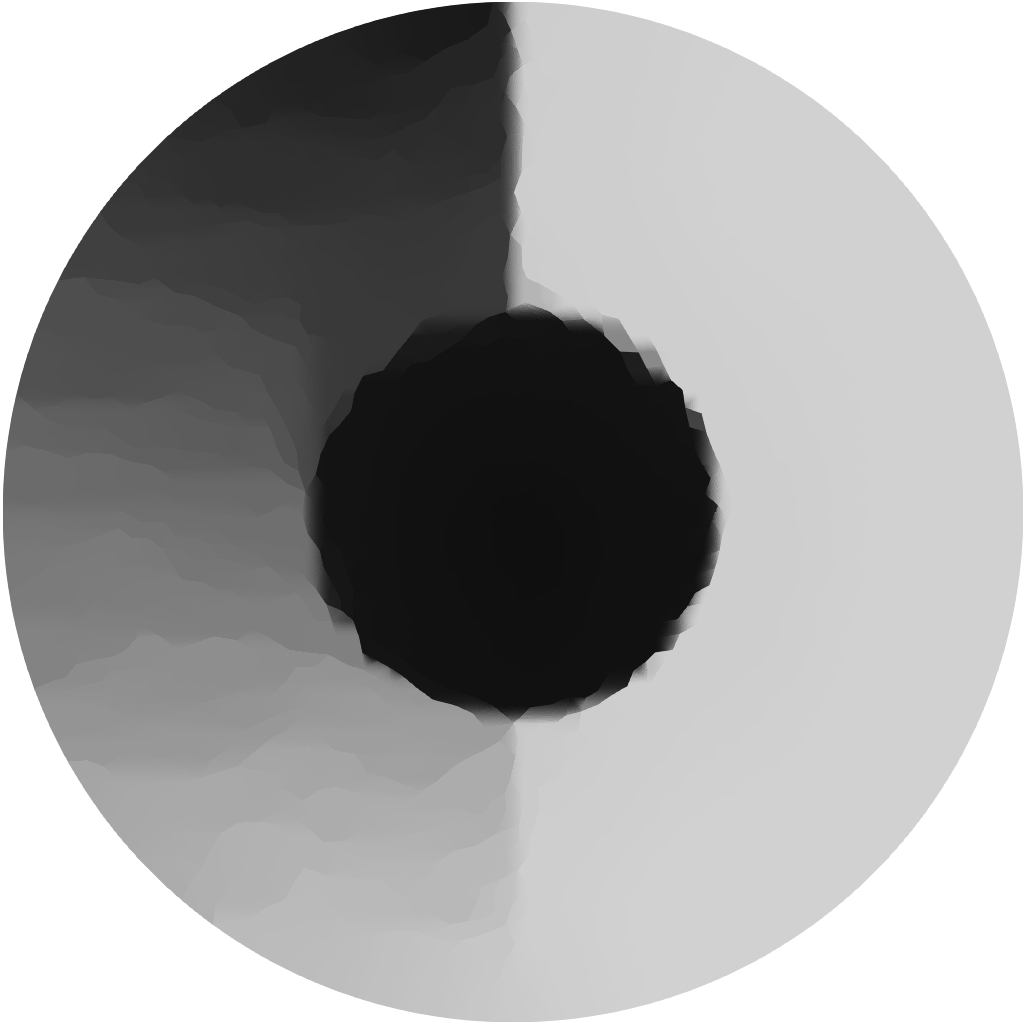}}{\includegraphics[width=0.48\linewidth]{./pix/Test4/T4_CP_DG2_1.png}}
		\hspace*{\fill}
	\end{center}
	\caption{Inpainting with $66.6\%$ of the cells erased (shown in black in the upper left image). The noisy images are not shown. Inpainting and denoising results for $\DG{0}$ (upper right), $\DG{1}$ (lower left) and $\DG{2}$(lower right) for \eqref{eq:DTV-L2} with parameter $\beta = \num{1e-3}$ for the isotropic setting ($s = 2$). Results obtained using \cref{alg:Chambolle-Pock_s_arbitrary} (Chambolle--Pock), see \cref{tab:data_test_4}.}
	\label{fig:TestCase4}
\end{figure}
	
\begin{table*}[htbp]
	\sisetup{scientific-notation=false}
	\centering
	\ifthenelse{\boolean{ispreprint}}{\scriptsize}{}
	\begin{tabular}{@{}llrrll@{}}
		\toprule
		Space      & Algorithm                                                                                                     & Iterations  & Time [s]    & $\psnr$      & Objective \\
		\midrule
		$\DG{0}$   & Chambolle--Pock ($\sigma = \num{0.70}$, $\tau = \num{1.25e-4}$, $\theta = \num{1}$, $\scaling = \num{1e-2}$)  & \num{2031}  & \num{47.7}  & \num{23.617} & \num{2.80e-3} \\
		\midrule
		$\DG{1}$   & Chambolle--Pock ($\sigma = \num{0.50}$, $\tau = \num{5.00e-4}$, $\theta = \num{1}$, $\scaling = \num{1e-2}$)  & \num{697}   & \num{49.0}  & \num{26.788} & \num{2.23e-3} \\
		\midrule 
		$\DG{2}$   & Chambolle--Pock ($\sigma = \num{0.07}$, $\tau = \num{1.50e-4}$, $\theta = \num{1}$, $\scaling = \num{1e-2}$)  & \num{2286}  & \num{354.0} & \num{26.385} & \num{2.47e-3} \\
		\midrule 
		\bottomrule \\
	\end{tabular}
	\caption{Performance of \cref{alg:Chambolle-Pock_s_arbitrary} (Chambolle--Pock) for the \eqref{eq:DTV-L2} inpainting problem shown in~\cref{fig:TestCase4} in various discretizations.}
	\label{tab:data_test_4}
\end{table*}

\added{The results for this combined inpainting and denoising problem are similar to those for the pure denoising case (\cref{subsec:denoising_of_DGr_in_DGr}).
Clearly, the higher-order results produce images closer to the original than the recovery in $\DG{0}$, which is also reflected in the $\psnr$ values.}

\section{Solving the \eqref{eq:DTV-L1} Problem}
\label{sec:DTV-L1}

We briefly discuss the implementation of two algorithms for 
\begin{equation*}
	\tag{DTV-L1}
	\text{Minimize} \quad \sum_{\added{T \subset \Omega_0}} \int_T \JJ_T \big\{ \abs{u-f} \big\} \, \dx + \beta \, \abs{u}_{DTV(\Omega)}
\end{equation*}
with $s \in [1,\infty]$.
They too can be realized equally efficiently as their original counterparts devised for images on Cartesian grids with low-order finite difference approximations of the gradient and divergence.
For simplicity, we restrict the discussion to the polynomial degrees $r \in \{0,1\}$ in this section so that all weights $c_{T,i}$, $c_{E,j}$ as well as $C_{T,k}$ are strictly positive.
The cases $r = \{2,3\}$ can be included provided that zero weights are properly treated and we come back to this in \cref{subsec:Missing_polynomial_degrees}.

\subsection{Chambolle--Pock Method}
\label{subsec:Chambolle-Pock_TV-L1}

We focus on the changes compared to the method for \eqref{eq:DTV-L2} discussed in \cref{subsec:Chambolle-Pock}.
As in \cref{subsec:Discrete_dual_problem_TV-L1}, we need to replace $F$ by \eqref{eq:definition_of_F_for_DTV-L1} and use the lumped inner product \eqref{eq:scalar_product_DGr_lumped} in $U = \DG{r}(\Omega)$.
Due to the diagonal structure of both $F$ and the inner product, the $F$-prox operator is easily seen to be $\alert{u} = \prox_{\sigma F}(\bar u)$ if and only if
\begin{equation}
	\label{eq:definition_of_prox_sigmaF_DTV-L1}
	\alert{u_{T,k}} 
	=
	f_{T,k} 
	+ 
	\shrink \big( \abs{\bar u_{T,k} - f_{T,k}}, \; \sigma \big)
	,
\end{equation}
\added{in case $T \subset \Omega_0$,} similarly as in \cite[Sect.~6.2.2]{ChambollePock2011}.
\added{In case $T \subset \Omega \setminus \Omega_0$, we have $u_{T,k} = \bar u_{T,k}$.}
The remaining steps in \cref{alg:Chambolle-Pock_s_arbitrary} are unaffected.

\subsection{ADMM Method}
\label{subsec:ADMM_TV-L1}

Finally we consider the Alternating Direction Method of Multipliers (ADMM) for the primal problem \eqref{eq:DTV-L1} as in \cite{TaoYang2009:1_preprint}.
In our context, similar as for the split Bregman method (\cref{subsec:split_Bregman}), one introduces variables $u \in \DG{r}(\Omega)$ and $\bd, \bb \in Y$.
A second splitting $e = u - f$ is required, so we additionally introduce $e \in \DG{r}(\Omega)$ as well as a multiplier $g \in \DG{r}(\Omega)$.
The corresponding augmented Lagrangian functional reads
\begin{multline}
	\label{eq:split_Bregman_ALM_functional_DTV-L1}
	\sum_{\added{T \subset \Omega_0},k} \abs{e_{T,k}} \, C_{T,k}
	+
	\beta \sum_{T,i} c_{T,i} \, \bigabs{\bd_{T,i}}_s 
	\ifthenelse{\boolean{ispreprint}}{}{\\}
	+
	\beta \sum_{E,j} \abs{\bn_E}_s \, c_{E,j} \, \abs{d_{E,j}}
	+
	\frac{\lambda}{2} \norm{\bd - \Lambda u - \bb}_Y^2
	\\
	+ 
	\frac{\lambda\added{\scaling}}{2} \sum_{T,k} C_{T,k} \, \bigabs{e_{T,k} - u_{T,k} + f_{T,k} - g_{T,k}}^2 
	.
\end{multline}
Let us briefly consider the individual minimization problems w.r.t.\ $u$, $\bd$, and $e$.
The $u$-problem is to minimize
\begin{multline}
	\label{eq:split_Bregman_ALM_functional_u_DTV-L1}
	\frac{\lambda\added{\scaling}}{2} \sum_{T,k} C_{T,k} \, \bigabs{e_{T,k} - \alert{u_{T,k}} + f_{T,k} - g_{T,k}}^2 
	\\
	+
	\frac{\lambda\added{\scaling}}{2} \sum_{T,i} c_{T,i} \, \bigabs{\bd_{T,i} - \nabla \alert{u}(\LagrangeNodesTrmone{i}) - \bb_{T,i}}_2^2 
	\ifthenelse{\boolean{ispreprint}}{}{\\}
	+
	\frac{\lambda}{2} \sum_{E,j} c_{E,j} \, \bigabs{d_{E,j} - \jump{\alert{u}}(\LagrangeNodesEr{j}) - b_{E,j}}^2
\end{multline}
w.r.t.\ $\alert{u} \in \DG{r}(\Omega)$.
This problem is similar to \eqref{eq:split_Bregman_ALM_functional_u} and it leads to a coupled linear system for $\alert{u}$.
The minimization of \eqref{eq:split_Bregman_ALM_functional_DTV-L1} w.r.t.\ $\bd \in \RT{r+1}^0(\Omega)$ is identical to \eqref{eq:split_Bregman_ALM_functional_d} and the $e$-problem is to minimize
\begin{equation}
	\label{eq:split_Bregman_ALM_functional_e_DTV-L1}
	\sum_{\added{T \subset \Omega_0},k} \abs{\alert{e}_{T,k}} \, C_{T,k}
	+ 
	\frac{\lambda\added{\scaling}}{2} \sum_{T,k} C_{T,k} \, \bigabs{\alert{e}_{T,k} - u_{T,k} + f_{T,k} - g_{T,k}}^2 
	.
\end{equation}
This problem can be easily solved via shrinkage, cf.\ \eqref{eq:split_Bregman_ALM_functional_dE}.
Finally, the multiplier update for $\bb$ is as in \cref{subsec:split_Bregman}, and the update for $g$ is similar; see \cref{alg:ADMM_DTV-L1_s_arbitrary}.
Once again, the solution $\bp$ of the dual \eqref{eq:dualDTV-L1} can be recovered from the multipliers $\bb_{T,i}$ and $b_{E,j}$ as in \eqref{eq:recover_p_from_discrete_multipliers}.
Moreover, it can be easily checked that 
\begin{equation}
	\label{eq:interpret_discrete_multiplier_g}
	\lambda \, g_{T,k} 
	=
	\frac{1}{C_{T,k}} \int_T (\div \bp) \, \LagrangeBasisTr{k} \, \dx
\end{equation}
holds, where the quantity on the right appears as a constraint in \eqref{eq:dualDTV-L1} and thus it satisfies $\abs{\lambda \, g_{T,k}} \le 1$ in the limit\added{ where $T \subset \Omega_0$ and $\abs{\lambda \, g_{T,k}} = 0$ where $T \subset \Omega \setminus \Omega_0$}.

\begin{algorithm}
	\caption{ADMM algorithm for \eqref{eq:DTV-L1} with $s \in [1,\infty]$}
	\label{alg:ADMM_DTV-L1_s_arbitrary}
	\begin{algorithmic}[1]
		\STATE Set $u^{(0)} \coloneqq f \in \DG{r}(\Omega)$, $\bb^{(0)} \coloneqq \bnull \in Y$ and $\bd^{(0)} \coloneqq \bnull \in Y$ 
		\STATE Set $e^{(0)} \coloneqq 0 \in \DG{r}(\Omega)$ and $g^{(0)} \coloneqq 0 \in \DG{r}(\Omega)$
		\STATE Set $n \coloneqq 0$
		\WHILE{not converged}
		\STATE Minimize \eqref{eq:split_Bregman_ALM_functional_u_DTV-L1} for $u^{(n+1)}$ with data $\bb^{(n)}$, $\bd^{(n)}$, $e^{(n)}$ and $g^{(n)}$
		\STATE Minimize \eqref{eq:split_Bregman_ALM_functional_dTE} for $\bd^{(n+1)}$ with data $u^{(n+1)}$ and $\bb^{(n)}$
		\STATE Minimize \eqref{eq:split_Bregman_ALM_functional_e_DTV-L1} for $e^{(n+1)}$ with data $u^{(n+1)}$ and $g^{(n)}$
		\STATE Set $\bb_{T,i}^{(n+1)} \coloneqq \bb_{T,i}^{(n)} + \nabla u^{(n+1)}(\LagrangeNodesTrmone{i}) - \bd_{T,i}^{(n+1)}$ 
		\STATE Set $b_{E,j}^{(n+1)} \coloneqq b_{E,j}^{(n)} + \jump{u^{(n+1)}}(\LagrangeNodesEr{j}) - d_{E,j}^{(n+1)}$
		\STATE Set $g_{T,k}^{(n+1)} \coloneqq g_{T,k}^{(n)} + u_{T,k}^{(n+1)} - f_{T,k} - e_{T,k}^{(n+1)}$
		\STATE Set $n \coloneqq n+1$
		\ENDWHILE
		\STATE Set $\bp^{(n)}$ by \eqref{eq:recover_p_from_discrete_multipliers} with data $\bb^{(n)}$ 
	\end{algorithmic}
\end{algorithm}

\section{Extensions}
\label{sec:Extensions}

In this section we collect a number of extensions showing that problems more general than those based on the TV-$L^2$ and TV-$L^1$ models and discontinuous functions can be dealt with efficiently by generalizations of the respective algorithms to our higher-order finite element setting.

\subsection{Huber TV-Seminorm}
\label{subsec:Huber_TV-Seminorm}

We consider the replacement of the TV-seminorm by its `Huberized' variant; see for instance \cite{Kuensch1994,PanReeves2006} and \cite[Ch.~4]{ScherzerGrasmairGrossauerHaltmeierLenzen2009}.
In the case $s = 2$, on which we focus here, the function $G$ in \eqref{eq:definition_of_G} can be written as
\begin{multline*}
	G(\bd) 
	=
	\sum_T \int_T \II_T \big\{ \abs{\bd_T}_2 \big\} \, \dx
	+
	\sum_E \int_E \II_E \big\{ \abs{d_E} \big\} \, \ds
	\ifthenelse{\boolean{ispreprint}}{}{\\}
	=
	\sum_{T,i} \, c_{T,i} \, \bigabs{\bd_{T,i}}_2
	+
	\sum_{E,j} \, c_{E,j} \, \abs{d_{E,j}}
	.
\end{multline*}
The corresponding Huber functional with parameter $\varepsilon > 0$ then becomes
\begin{multline}
	\label{eq:GHuber_in_case_s=2}
	G_\varepsilon(\bd) 
	=
	\sum_{T,i} \, c_{T,i} \, \max\left\{ \abs{\bd_{T,i}}_2 - \frac{\varepsilon}{2}, \; \frac{1}{2 \, \varepsilon} \abs{\bd_{T,i}}_2^2 \right\} 
	\\
	+
	\sum_{E,j} \, c_{E,j} \, \max\left\{ d_{E,j} - \frac{\varepsilon}{2}, \; - d_{E,j} - \frac{\varepsilon}{2}, \; \frac{1}{2 \, \varepsilon} (d_{E,j})^2 \right\}
	.
\end{multline}
It can be shown by straightforward calculations that the convex conjugate of $G_\varepsilon$ is
\begin{equation}
	\label{eq:GstarHuber_in_case_s=2}
	G_\varepsilon^*(\bp)
	=
	I_{\bP}\added{(\bp)} + \frac{\varepsilon}{2} \norm{\bp}_{Y^*}^2
	.
\end{equation}
We recall that $I_{\bP}$ is the indicator function of the constraint set $\bP$ in \eqref{eq:constraint_set}.

The `Huberized' discrete TV-seminorm is thus defined by $G_\varepsilon(\Lambda u)$ where $\Lambda$ is given in \eqref{eq:definition_of_Lambda}.
It can be combined with both the $L^2$ and $L^1$ loss terms,
\begin{equation*}
	F(u) = \frac{1}{2} \norm{u-f}_{L^2(\added{\Omega_0})}^2
\end{equation*}
and
\begin{equation*}
	F(u) = \sum_{\added{T \subset \Omega_0}} \int_T \JJ_T \big\{ \abs{u-f} \big\} \, \dx
	.
\end{equation*}
We refer to the corresponding primal problems, i.e., the minimization of $F(u) + \beta \, G_\varepsilon(\Lambda u)$, as (DTV$_{\!\varepsilon}$-L2) and (DTV$_{\!\varepsilon}$-L1).
The specific form of corresponding dual problems, where $F^*(-\Lambda^* \bp) + \beta \, G_\varepsilon^*(\bp/\beta)$ is minimized, should now also be clear.

\added{The Chambolle--Pock method (\cref{alg:Chambolle-Pock_s_arbitrary}) can be adapted in a straightforward way by replacing the $G^*$-prox by the one involving $G_\varepsilon^*$, i.e., by replacing \eqref{eq:prox_tauGstar} by}
\begin{equation}
	\label{eq:prox_tagGepsilonstar}
	\alert{\bp} = \argmin_{\bq \in \RT{r+1}^0(\Omega)} \frac{1}{2} \norm{\bq - \bar \bp}_{Y^*}^2 + \frac{\varepsilon}{2} \norm{\bq}_{Y^*}^2
	\text{ s.t.\ } \bq \in \bP.
\end{equation}
\added{In case $s = 2$, for instance, this amounts to}
\begin{equation}
	\begin{aligned}
		\RTDofsE{j}(\alert{\bp})
		&
		=
		\min \left\{ \added{\frac{1}{1+\varepsilon}} \abs{\RTDofsE{j}(\bar \bp)} , \; \beta \, \abs{\bn_E}_s \, c_{E,j} \right\} \frac{\RTDofsE{j}(\bar \bp)}{\abs{\RTDofsE{j}(\bar \bp)}}
		,
		\\
		\RTDofsT{i}(\alert{\bp})
		&
		=
		\min \left\{ \added{\frac{1}{1+\varepsilon}} \abs{\RTDofsT{i}(\bar \bp)}_2, \; \beta \, c_{T,i} \right\} \frac{\RTDofsT{i}(\bar \bp)}{\abs{\RTDofsT{i}(\bar \bp)}_2}
	\end{aligned}
\end{equation}
\added{in place of \eqref{eq:definition_of_prox_tauGstar}.}
\ifthenelse{\boolean{ispreprint}}%
{
	Chambolle's projection method (\cref{alg:Chambolle_Projection_s=2}) can also be adapted to (DTV$_{\!\varepsilon}$-L2) in the case $s = 2$ by modifying \eqref{eq:Chambolle_Projection_KKT_4} and \eqref{eq:Chambolle_Projection_gradient}.
	The modification necessary to the primal-dual active set method (\cref{alg:PDAS_s=1}) for $s = 1$ was already discussed in \cref{subsec:PDAS}.
}
{
}
\added{Similarly, the Chambolle--Pock method for \eqref{eq:DTV-L1} (\cref{subsec:Chambolle-Pock_TV-L1}) can be adapted to solve (DTV$_{\!\varepsilon}$-L1) with the same modification as above.}

\subsection{Polynomial Degrees}
\label{subsec:Missing_polynomial_degrees}

We recall that we restricted the discussion of algorithms for \eqref{eq:DTV-L2} and its dual \eqref{eq:dualDTV-L2} in \cref{sec:Algorithms} to the cases $r \in \{0,1,2,4\}$, each of which ensures that $c_{T,i}$ and $c_{E,j}$ are strictly positive; see \cref{lemma:basis_functions_positive_integrals}.
In the case $r = 3$, three of the six weights $c_{T,i}$ on each triangle are zero.
This is not a major issue but it requires some care when formulating the algorithms in \cref{sec:Algorithms} in this case.
Briefly, when $c_{T,i} = 0$, quantities bearing the same index $(T,i)$ are to be ignored.
This applies, in particular, to the inner product $\scalarprod{\cdot}{\cdot}_{Y^*}$ in \eqref{eq:inner_product_in_Ystar}.

Similarly, we excluded the cases $r \in \{2,3\}$ in the discussion of algorithms for \eqref{eq:DTV-L1} and its dual problem \eqref{eq:dualDTV-L1} in \cref{sec:DTV-L1} so that the weights $C_{T,k}\coloneqq \int_T \LagrangeBasisTr{k} \, \dx$ pertaining to the basis $\{\LagrangeBasisTr{k}\}$ of $\PP_r(T)$ are strictly positive as well.
In case $r = 3$, we proceed as discussed above, ignoring terms for which the corresponding weights $c_{T,i} = 0$.
When $r = 2$, we instead ignore terms for which $C_{T,k} = 0$ in any of the algorithms in \cref{sec:DTV-L1}.

\ifthenelse{\boolean{ispreprint}}%
{
\subsection{Images in $\CG{r}(\Omega)$}
\label{subsec:CG_instead_of_DG}

While we believe that the representation of images as discontinuous functions is rather natural, it is certainly useful to consider also the case when $u \in \CG{r}(\Omega)$.
This situation is meaningful only for $r \ge 1$, and hence we consider $r \in \{1,2,3,4\}$ in this section.
Clearly, for $u \in \CG{r}(\Omega)$, the TV-seminorm \eqref{eq:continuous_TV_for_DG} and its discrete counterpart \eqref{eq:discrete_TV_for_DG} reduce to 
\begin{subequations}
	\label{eq:continuous_and_discrete_TV_for_CG}
	\begin{align}
		\abs{u}_{TV(\Omega)} 
		&
		=
		\sum_T \int_T \abs{\nabla u}_s \, \dx
		\\
		\abs{u}_{DTV(\Omega)} 
		&
		=
		\sum_T \int_T \II_T \big\{ \abs{\nabla u}_s \big\} \, \dx
	\end{align}
\end{subequations}
since the terms related to edge jumps disappear.
As was mentioned in the introduction, the lowest-order case $r = 1$ has been considered in \cite{FengProhl2003,ElliottSmitheman2009,Bartels2012,BartelsNochettoSalgado2014,BerkelsEfflandRumpf2017,ClasonKruseKunisch2017}.
In this case, $\abs{u}_{TV(\Omega)} = \abs{u}_{DTV(\Omega)}$ holds.
Similarly as in \cref{cor:error_estimate_DTV}, a simple convexity argument shows that $\abs{u}_{TV(\Omega)} \le \abs{u}_{DTV(\Omega)}$ holds for all $u \in \CG{2}(\Omega)$.

Since $\CG{r}(\Omega)$ is a proper subspace of $\DG{r}(\Omega)$, it can be expected that it is enough to take the supremum in \cref{theorem:dual_representation_of_DTV} over a smaller set of test functions.
Indeed, as the image of the gradient operator $\Lambda: U = \CG{r}(\Omega) \to Y$ reduces to $Y = \prod_T \PP_{r-1}(T)^2$, the edge-based dofs of $\bp \in \RT{r+1}^0(\Omega)$ that can be dispensed with since no edge jumps need to be measured.
We thus obtain the following corollary of \cref{theorem:dual_representation_of_DTV}.

\begin{corollary}[Dual Representation of $\abs{u}_{DTV(\Omega)}$ for $u \in \CG{r}(\Omega)$]
	\label{corollary:dual_representation_of_DTV_for_CG}
	Suppose $r \in \{1,2,3,4\}$.
	Then for any $u \in \CG{r}(\Omega)$, the discrete TV-seminorm \eqref{eq:discrete_TV_for_DG} reduces to \eqref{eq:continuous_and_discrete_TV_for_CG} and it satisfies
	\begin{align}
		\MoveEqLeft
		\abs{u}_{DTV(\Omega)} 
		=
		\sup \Bigg\{ 
		\int_\Omega u \div \bp \, \dx: \bp \in \RT{r+1}^0(\Omega),
		\notag
		\\
		&
		\abs{\RTDofsT{i}(\bp)}_{s^*} \le c_{T,i} \text{ for all $T$, } i = 1, \ldots, r \, (r+1)/2,
		\notag
		\\
		&
		\RTDofsE{j}(\bp) = 0 \text{ for all $E$, } j = 1, \ldots, r+1 
		\Bigg\}
		.
		\label{eq:dual_representation_of_DTV_for_CG}
	\end{align}
\end{corollary}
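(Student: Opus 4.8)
The plan is to deduce the result directly from \cref{theorem:dual_representation_of_DTV} by exploiting the inclusion $\CG{r}(\Omega) \subset \DG{r}(\Omega)$. First I would observe that any $u \in \CG{r}(\Omega)$ is continuous across interior edges, so that $\jump{u} \equiv 0$ on every $E$. Consequently, the edge contributions in the definition \eqref{eq:discrete_TV_for_DG} of $\abs{u}_{DTV(\Omega)}$ vanish identically, which immediately yields the reduced formula \eqref{eq:continuous_and_discrete_TV_for_CG}. It then remains to show that the supremum in \eqref{eq:dual_representation_of_DTV}, valid for $u$ regarded as an element of $\DG{r}(\Omega)$, is unaffected when the edge constraints $\abs{\RTDofsE{j}(\bp)} \le \abs{\bn_E}_s \, c_{E,j}$ are strengthened to the equalities $\RTDofsE{j}(\bp) = 0$.

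The crucial ingredient is the integration-by-parts identity \eqref{eq:dual_representation_of_DTV_1} from the proof of \cref{theorem:dual_representation_of_DTV}, which reads
\[
	-\int_\Omega u \div \bp \, \dx = \sum_T \int_T \nabla u \cdot \bp \, \dx + \sum_E \int_E \jump{u} \, (\bp \cdot \bn_E) \, \ds.
\]
Since $\jump{u} \equiv 0$, the entire edge sum drops out for \emph{every} $\bp \in \RT{r+1}^0(\Omega)$, so the objective depends on $\bp$ only through its triangle dofs $\RTDofsT{i}(\bp)$. I would then argue the two inequalities separately. For the upper bound, note that imposing $\RTDofsE{j}(\bp) = 0$ only shrinks the feasible set of \eqref{eq:dual_representation_of_DTV}, since $\abs{0} = 0 \le \abs{\bn_E}_s \, c_{E,j}$, while leaving the objective unchanged; hence the restricted supremum is at most $\abs{u}_{DTV(\Omega)}$. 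For the lower bound, I would take any maximizer $\bp$ of \eqref{eq:dual_representation_of_DTV} and modify it by setting all its edge dofs to zero while retaining its triangle dofs. By the representation \eqref{eq:representation_of_p_in_RTr+1}, this defines a new admissible $\bp' \in \RT{r+1}^0(\Omega)$ with $\RTDofsE{j}(\bp') = 0$ and unchanged triangle dofs, whose objective value still equals $\abs{u}_{DTV(\Omega)}$ because the edge terms contribute nothing. Combining both estimates gives \eqref{eq:dual_representation_of_DTV_for_CG}.

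The only point requiring care is that the triangle and edge degrees of freedom may be prescribed independently: each dof is constrained exactly once in \eqref{eq:dual_representation_of_DTV}, and by \eqref{eq:representation_of_p_in_RTr+1} any prescribed collection of dof values determines a unique $\bp \in \RT{r+1}^0(\Omega)$. This guarantees that zeroing out the edge dofs neither violates any triangle constraint nor alters the triangle contributions to the objective, so the argument goes through verbatim for all admissible polynomial degrees $r \in \{1,2,3,4\}$, including the case $r = 3$ where some weights $c_{T,i}$ vanish.
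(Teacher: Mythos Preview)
Your proposal is correct and follows essentially the same approach as the paper. The paper does not give a formal proof; it merely observes before stating the corollary that, since $\CG{r}(\Omega) \subset \DG{r}(\Omega)$ and the image of $\Lambda$ reduces to $\prod_T \PP_{r-1}(T)^2$, the edge-based dofs ``can be dispensed with since no edge jumps need to be measured.'' Your two-inequality argument via \eqref{eq:dual_representation_of_DTV_1} and the independence of triangle and edge dofs is exactly a clean formalization of this remark.
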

It is straightforward to adopt the algorithms presented in \cref{sec:Algorithms,sec:DTV-L1} to this simpler situation.
In a nutshell, all edge-based quantities (such as $d_{E,j}$ and $b_{E,j}$ in the split Bregman method, \cref{alg:split_Bregman_s_arbitrary}) can be ignored, and the edge-based coefficients $\RTDofsE{j}(\bp)$ of any function $\bp \in \RT{r+1}^0(\Omega)$ would be left at zero.

We remark, however, that the gradient operator is not surjective onto $Y$\deleted{ unless $r = 1$ holds.}
\deleted{In case $r \in \{2,4\}$,}\added{so that} the set of test functions $\bp$ in \eqref{eq:dual_representation_of_DTV_for_CG} is unnecessarily large.
A more economical formulation for these cases remains open for future investigation.
}
{
}

\subsection{The 3D Case}
\label{subsec:3D}

\added{%
	When $\Omega \subset \R^3$ is triangulated by a mesh consisting of tetrahedra $K$ and interior facets $F$, then the former replace triangles $T$ and the latter replace interior edges $E$ throughout the paper.
	For instance, the definition \eqref{eq:discrete_TV_for_DG} of the discrete total variation becomes
}
\begin{multline}
	\label{eq:discrete_TV_for_DG_3D}
	\ifthenelse{\boolean{ispreprint}}{\hfill}{}
	\abs{u}_{DTV(\Omega)} 
	\ifthenelse{\boolean{ispreprint}}{}{\\}
	\coloneqq
	\sum_K \int_K \II_K \big\{ \abs{\nabla u}_s \big\} \, \dx
	+
	\sum_F \int_F \II_F \big\{ \bigabs{\vjump{u}}_s \big\} \, \ds
	.
	\ifthenelse{\boolean{ispreprint}}{\hfill}{}
\end{multline}
\added{%
	The definition of the jump \eqref{eq:definition_of_jump} across interior facets remains unchanged.
	The finite element spaces involved remain the same, except that their respective cell domains and thus their dimensions change; see \cref{tab:spaces_and_basis_functions_3D}.}
\begin{table}[htbp]
	\centering
	\ifthenelse{\boolean{ispreprint}}{}{\scriptsize}
	\begin{tabular}{@{}lll@{}}
		\toprule
		FE space           & local dimension               & global dimension \\
		\midrule
		$\CG{r}(\Omega)$   & $(r+1)(r+2)(r+3)/6$           & $N_K \, (r-3)^+ (r-2)(r-1)$
		\\
		($r \ge 1$)        &                               & ${} + N_F \, (r-2)^+ (r-1)/2$
		\\
		                   &                               & ${}+ N_E \, (r-1)^+ + N_V$
		\\
		\midrule
		$\DG{r}(\Omega)$   & $(r+1)(r+2)(r+3)/6$           & $N_K \, (r+1)(r+2)(r+3)/6$ 
		\\
		\midrule
		$\DG{r-1}(\Omega)$ & $r \, (r+1)(r+2)/6$           & $N_K \, r \, (r+1)(r+2)/6$
		\\
		\midrule
		$\DG{r}(\cup F)$   & $(r+1)(r+2)/2$                & $N_F \, (r+1)(r+2)/2$
		\\
		\midrule
		$\RT{r+1}^0(\Omega)$ & $(r+1)(r+2)(r+4)/2$         & $N_K \, r\,(r+1)(r+2)/2$ 
		\\
		                   &                               & ${}+N_F \, (r+1)(r+2)/2$
		\\
		\midrule 
		\bottomrule \\
	\end{tabular}
	\caption{Finite element spaces, their degrees of freedom and corresponding bases in 3D. Here $N_K$, $N_F$, $N_E$ and $N_V$ denote the number of tetrahedra, interior facets, interior edges and vertices in the triangular mesh; compare \cref{tab:spaces_and_basis_functions}.}
	\label{tab:spaces_and_basis_functions_3D}
\end{table}

\added{%
	The operator $\Lambda$, which represents the gradient and was defined in \eqref{eq:definition_of_Lambda}, now maps the space $U = \DG{r}(\Omega)$ onto $Y = \prod_K \PP_{r-1}(K)^3 \times \prod_F \PP_r(F)$.
	It is important to realize for our approach that $Y^*$ can still be identified with $\RT{r+1}^0(\Omega)$ with the duality product given by \eqref{eq:duality_product}, mutatis mutandis.
	From here, all results can be derived as in the 2D case. 
	We only mention that the analogue of \cref{lemma:basis_functions_positive_integrals} in 3D limits the polynomial degrees with non-negative weights to $r \in \{0,1,2,4\}$ in case of \eqref{eq:DTV-L2}; see \cite[Tab.~II]{Silvester1970}.
When \eqref{eq:DTV-L1} is considered, only the choices $r \in \{0,1\}$ remain.
}

\section{Conclusion and Outlook}
\label{sec:Conclusion_outlook}

In this paper we have introduced a discrete version (DTV) of the TV-seminorm for globally discontinuous ($\DG{r}$) Lagrangian finite element functions on \added{simplicial} grids in $\R^2$\added{ and $\R^3$}.
\added{Since continuous ($\CG{r}$) functions form a subspace of $\DG{r}$, all considerations apply to images represented as continuous finite element functions as well.}
We have shown that $\abs{\,\cdot\,}_{DTV(\Omega)}$ has a convenient dual representation in terms of the supremum over the space of Raviart--Thomas finite element functions, subject to a set of simple constraints.
This allows for the efficient realization of a variety of algorithms, e.g., \eqref{eq:dualDTV-L2} and \eqref{eq:dualDTV-L1}\added{ for image denoising and inpainting}, both with low and higher-order finite element functions available in finite element libraries.

Since we admit higher-order polynomial functions, it would be natural to extend our analysis to a discrete version of the total generalized variation (TGV) functional introduced in \cite{BrediesKunischPock2010}.
Another generalization that could be of interest is to consider finite element functions defined on more general cells than the simplices considered here.
Clearly rectangles are of particular interest in imaging applications, but also hexagons; see \cite{KnaupSteckmannBockenbachKachelriess2007,ColemanScotneyGardiner2016}, as mentioned in the introduction.
\added{We remark that $\RT{}$ finite element spaces on parallelograms were already discussed in the original contribution} \cite{RaviartThomas1977:1}\added{, and we refer to \cite{LeeParkPark2018_preprint} for an application to imaging, but only for the lowest-order case.
	The generalization to higher-order finite elements, as well as to more general element geometries, is left for future research.}

The polynomial degree in our \added{2D} study was limited to $0 \le r \le 4$ (or $0 \le r \le 3$ for \eqref{eq:DTV-L1}), which should be sufficient for most applications.
The limitation in the degree arises due to the requirement that the quadrature weights, i.e., the integrals over the standard Lagrangian basis functions, have to be non-negative; see \cref{lemma:basis_functions_positive_integrals}.
\deleted{%
Our approach carries over with no changes to tetrahedral grids on 3D~domains.
In this case the analogue of Lemma~\ref{lemma:basis_functions_positive_integrals} limits the polynomial degrees with non-negative weights to $r \in \{0,1,2,4\}$; see [Tab.~II]{Silvester1970}.
When \eqref{eq:DTV-L1} is considered, only the choices $r \in \{0,1\}$ remain.}
This brings up the question whether a Lagrangian basis for higher-order polynomial functions on triangles or tetrahedra exists, such that the integrals of the basis functions are (strictly) positive.
This is answered in the affirmative by results in \cite{WandzuraXiao2003,TaylorWingateBos2007} for the triangle and \cite{GellertHarbord1991,ZhangCuiLiu2009}\added{ for tetrahedra}, where interpolatory quadrature formulas with positive weights are constructed.
However, it remains to be investigated whether a Lagrangian finite element with a modified basis admits an appropriate Raviart--Thomas type counterpart such that a dual representation of $\abs{\,\cdot\,}_{DTV(\Omega)}$ parallel to \cref{theorem:dual_representation_of_DTV} continues to hold.
Moreover, such non-standard finite element spaces certainly incur an overhead in implementation.

One may also envision applications where it would be beneficial to allow for locally varying polynomial degrees and mesh sizes in imaging applications, so that the resolution can be chosen adaptively.
Finally, we mention possible extensions to vectorial TV-seminorms, see for instance \cite{GoldlueckeStrekalovskiyCremers2012}.
These topics remain for future research.

\appendix
\section{Examples of Finite Element Basis Functions}
\label{sec:Examples_of_FE_basis_functions}

\begin{figure}[htp]
	\centering
	\includegraphics[width=0.30\linewidth]{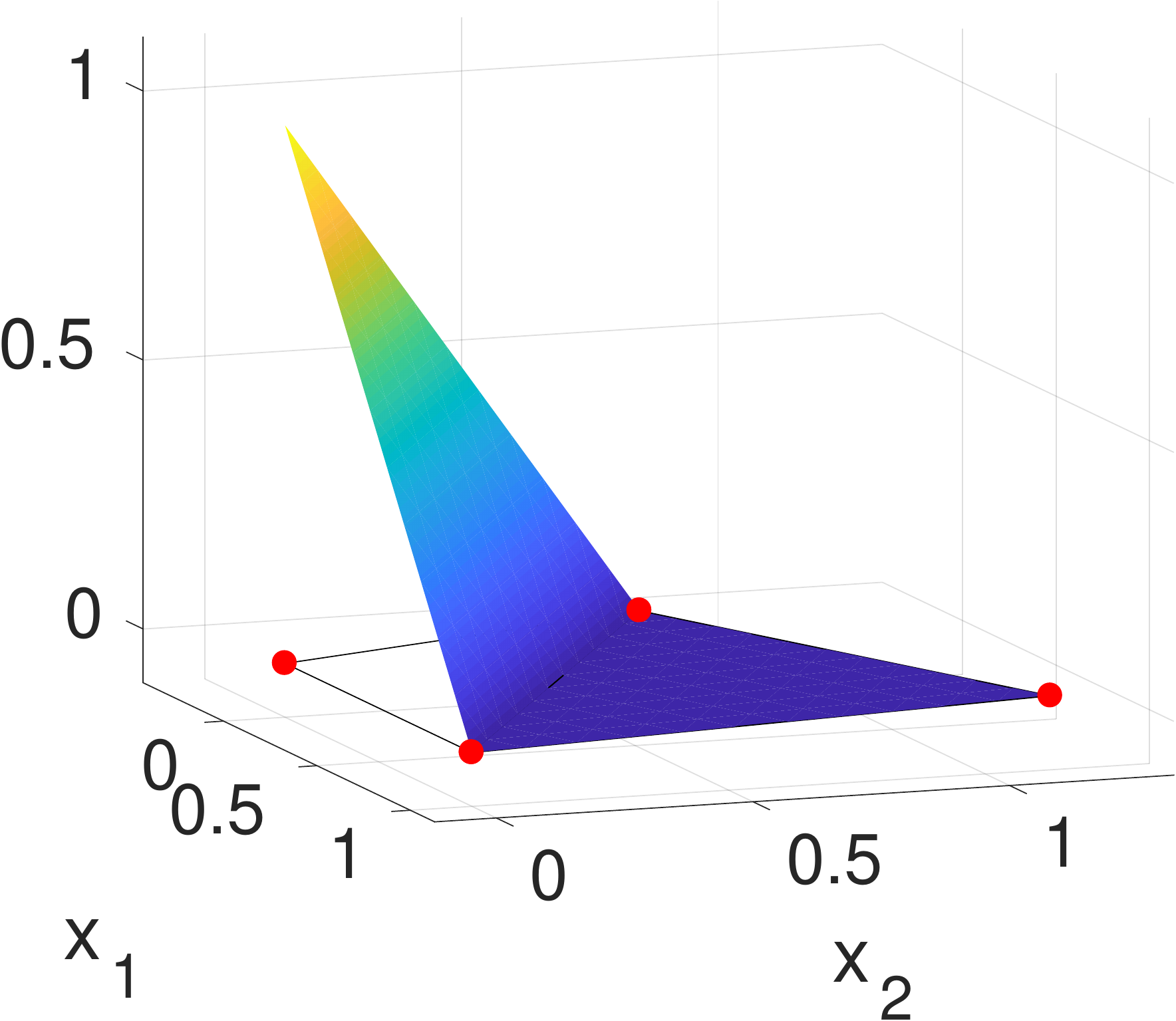}
	\hfill
	\includegraphics[width=0.30\linewidth]{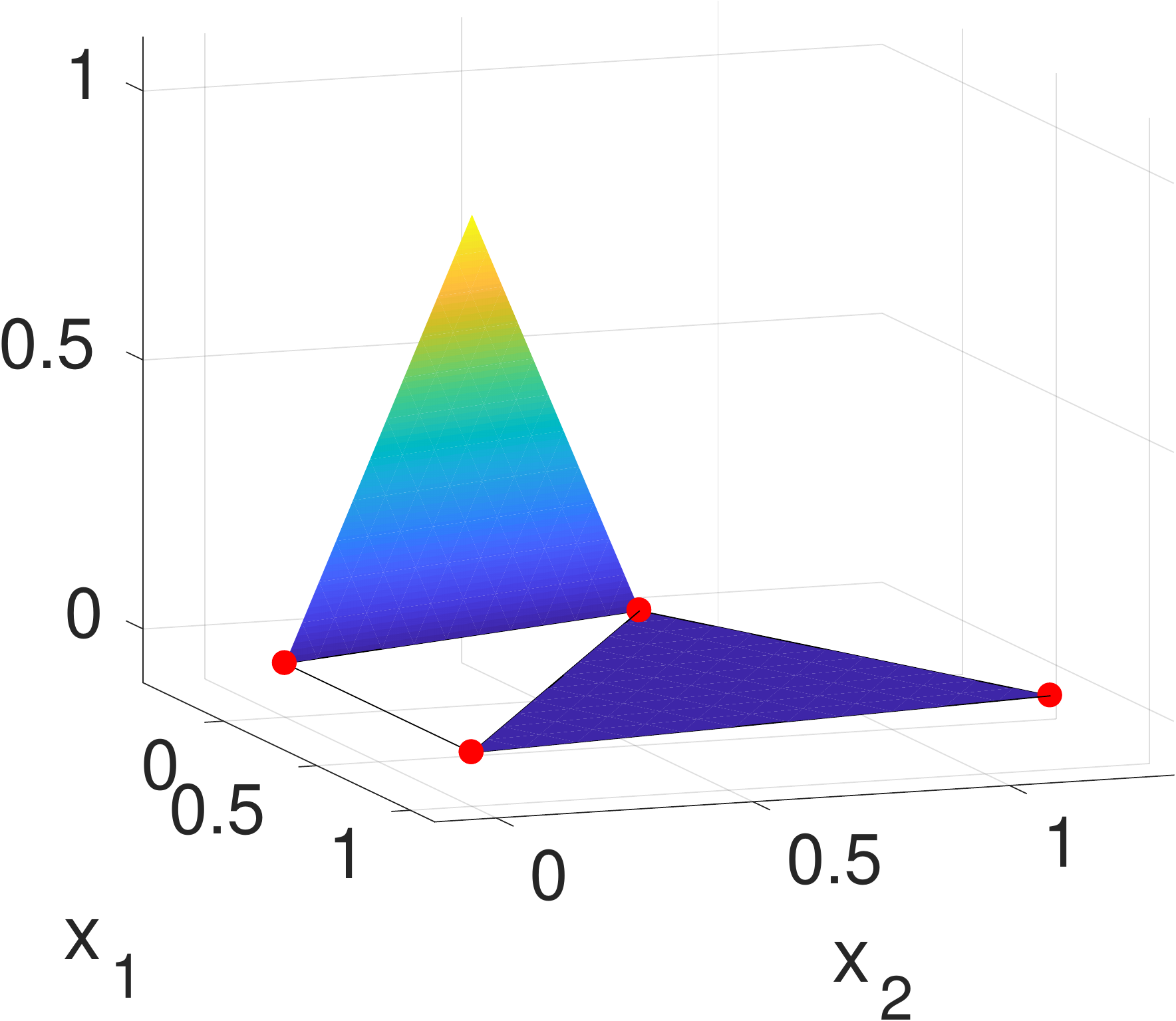}
	\hfill
	\includegraphics[width=0.30\linewidth]{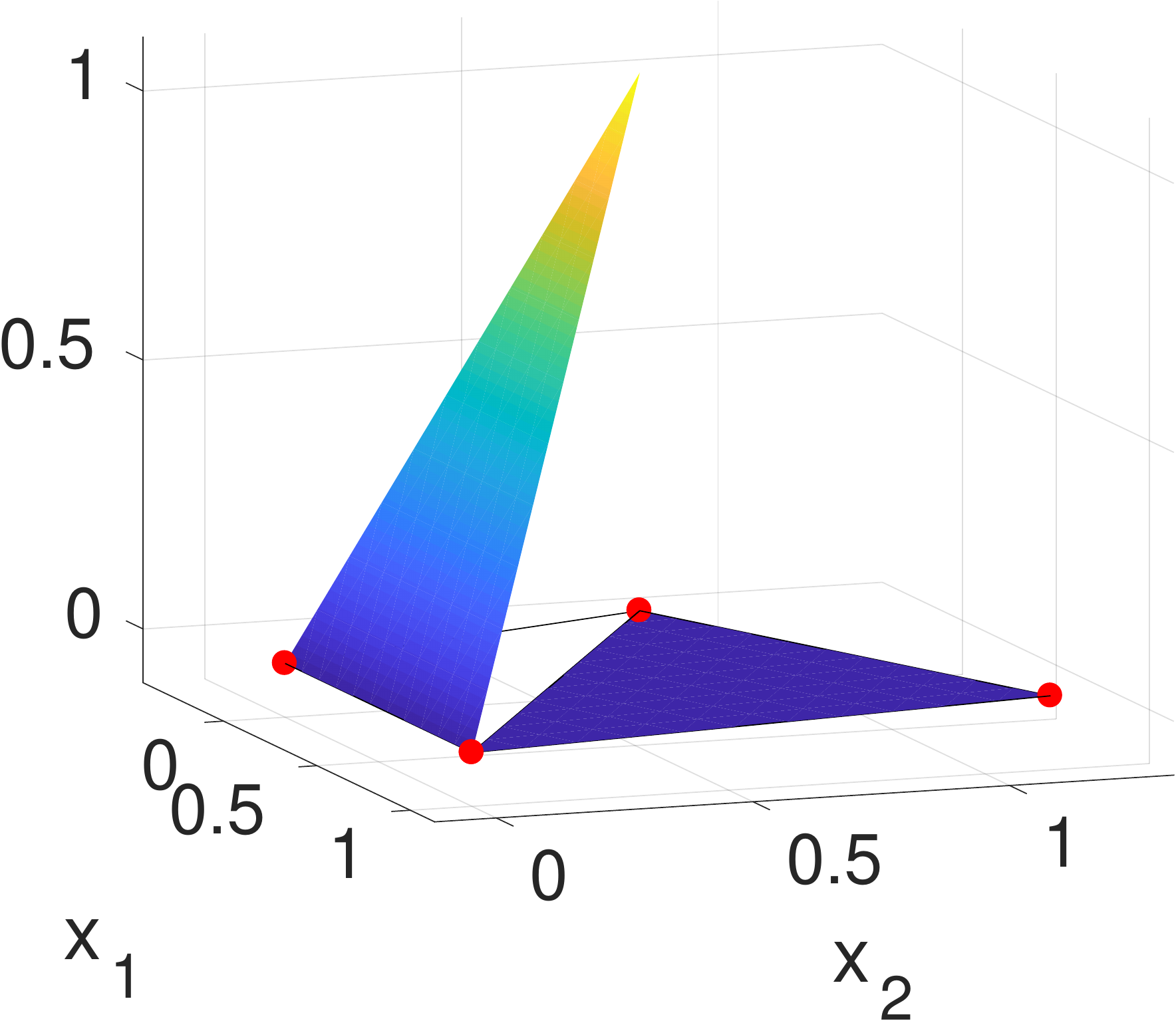}
	\\
	\includegraphics[width=0.30\linewidth]{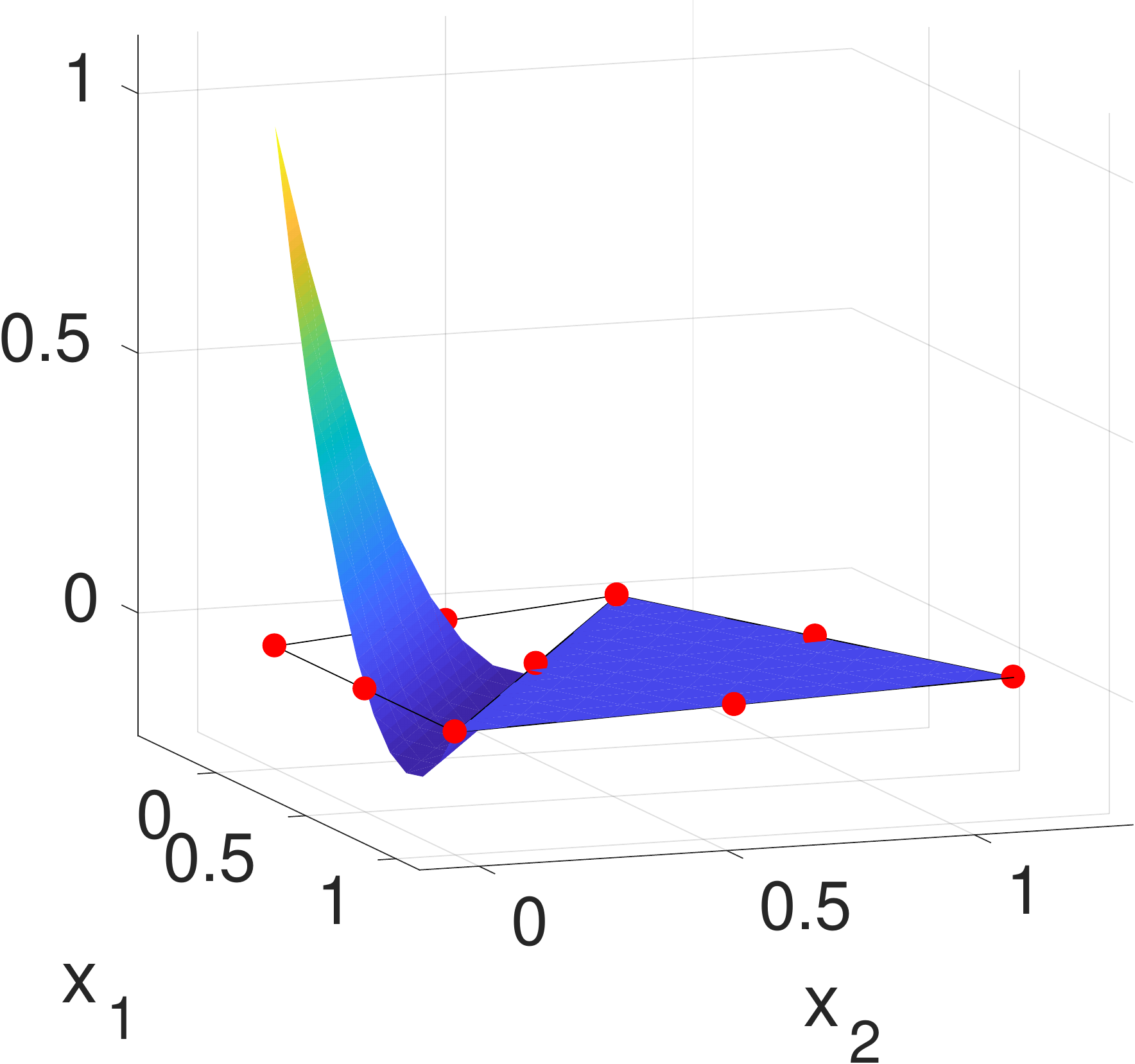}
	\hfill
	\includegraphics[width=0.30\linewidth]{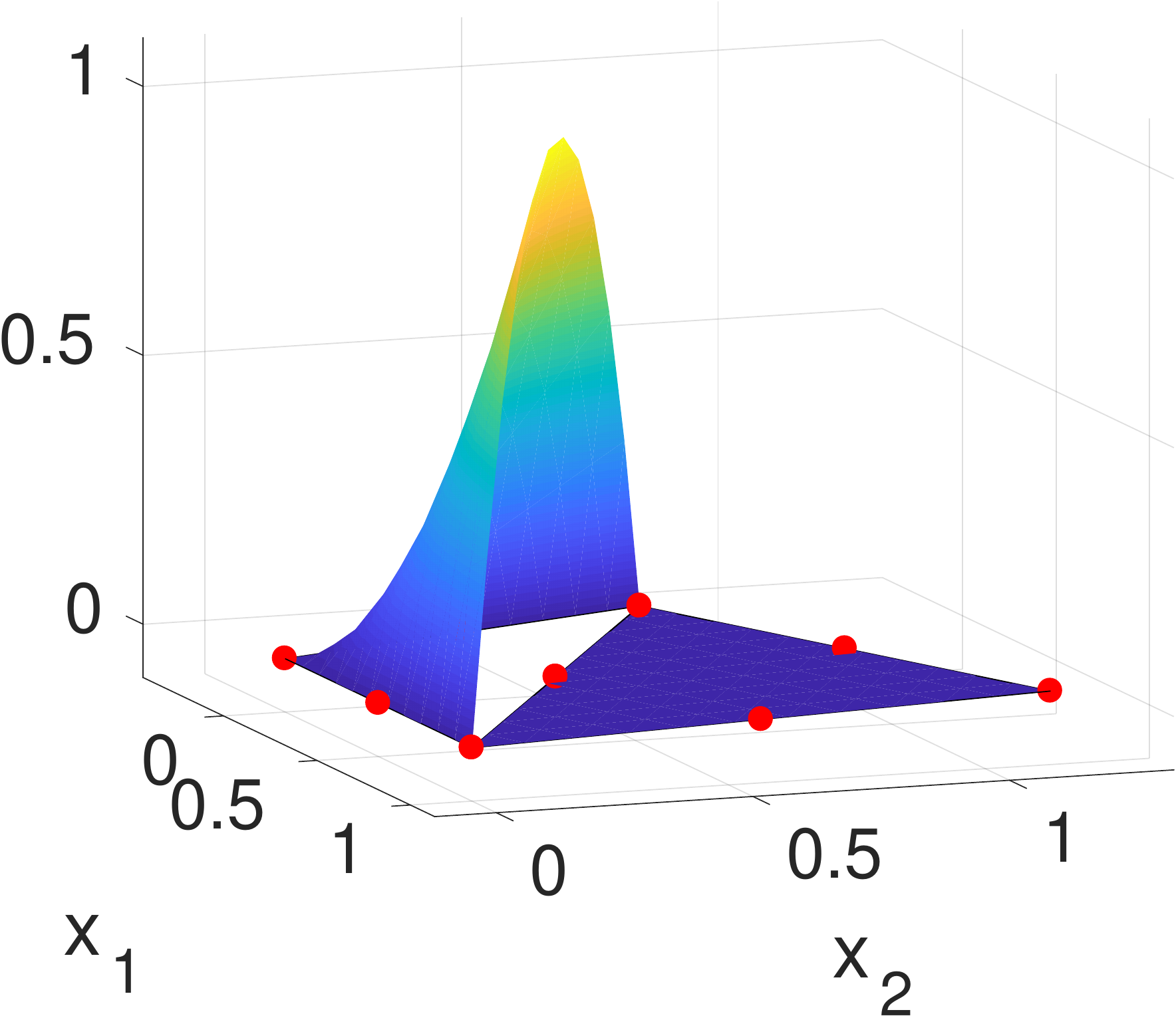}
	\hfill
	\includegraphics[width=0.30\linewidth]{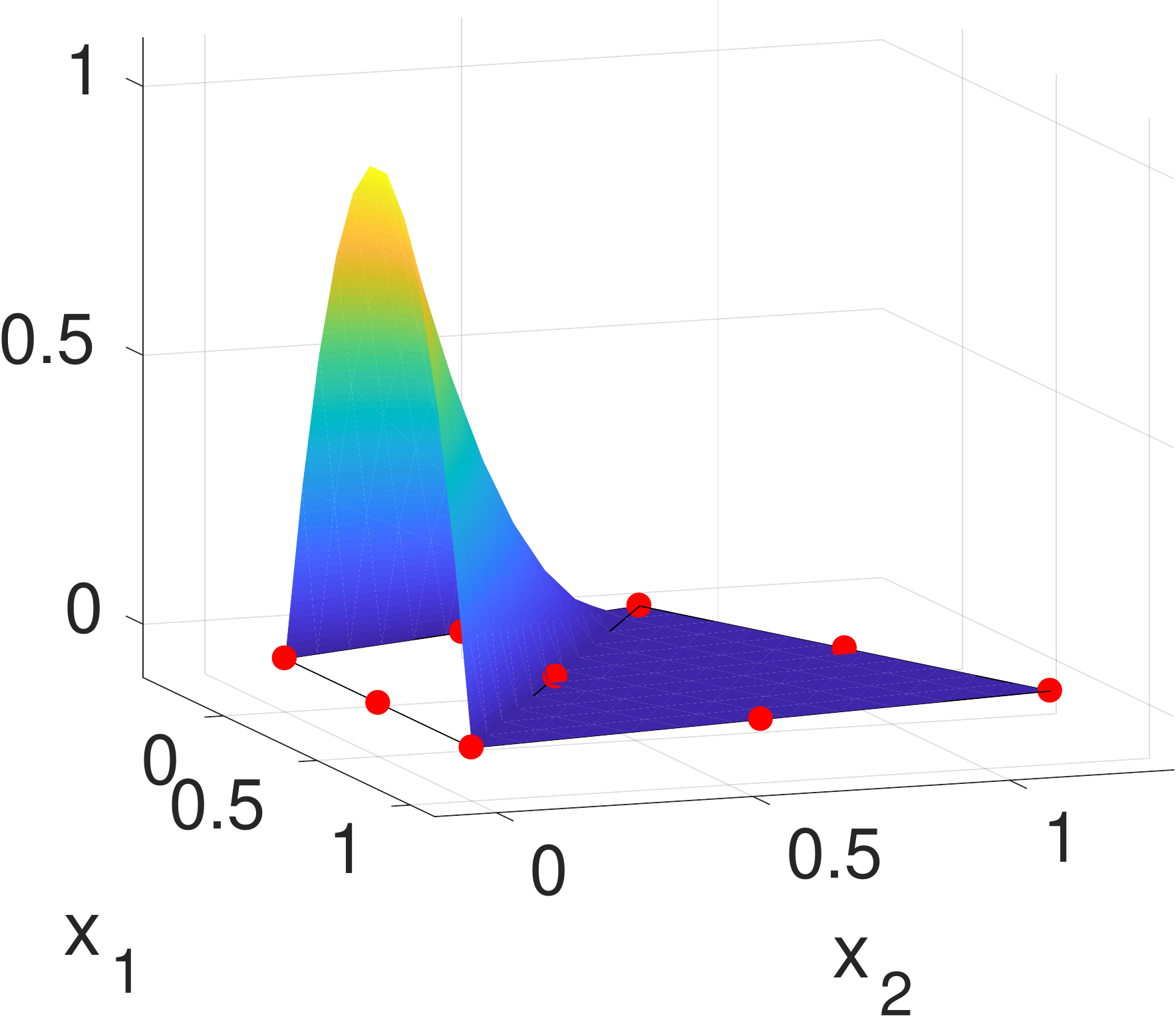}
	\caption{Some basis functions $\{\LagrangeBasisTr{k}\}$ of $\DG{r}(\Omega)$ for $r = 1$ (top row) and $r = 2$ (bottom row).}
	\label{fig:illustration_of_DG_basis_functions}
\end{figure}

\begin{figure}[htp]
	\centering
	\includegraphics[width=0.30\linewidth]{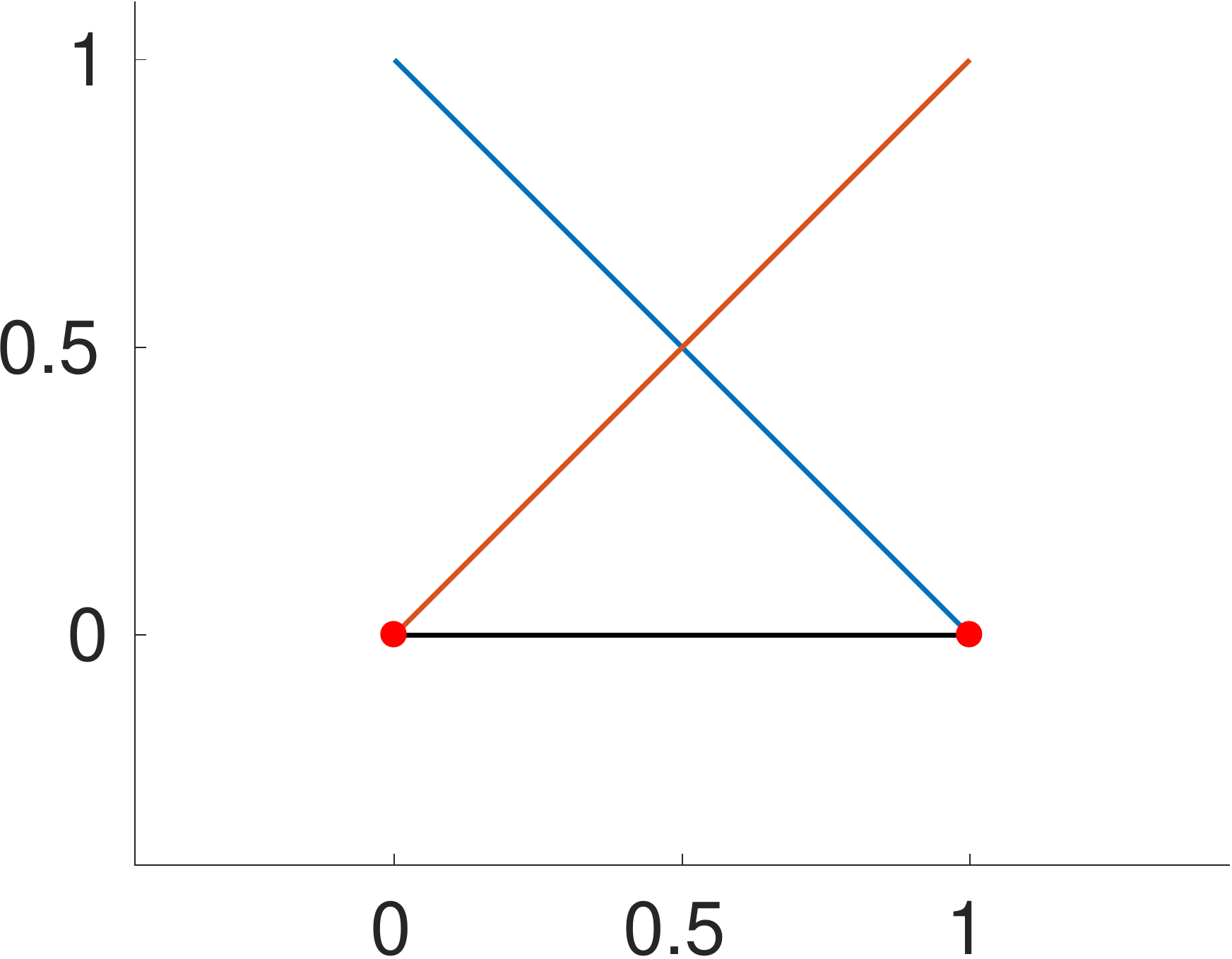}
	\hfill
	\includegraphics[width=0.30\linewidth]{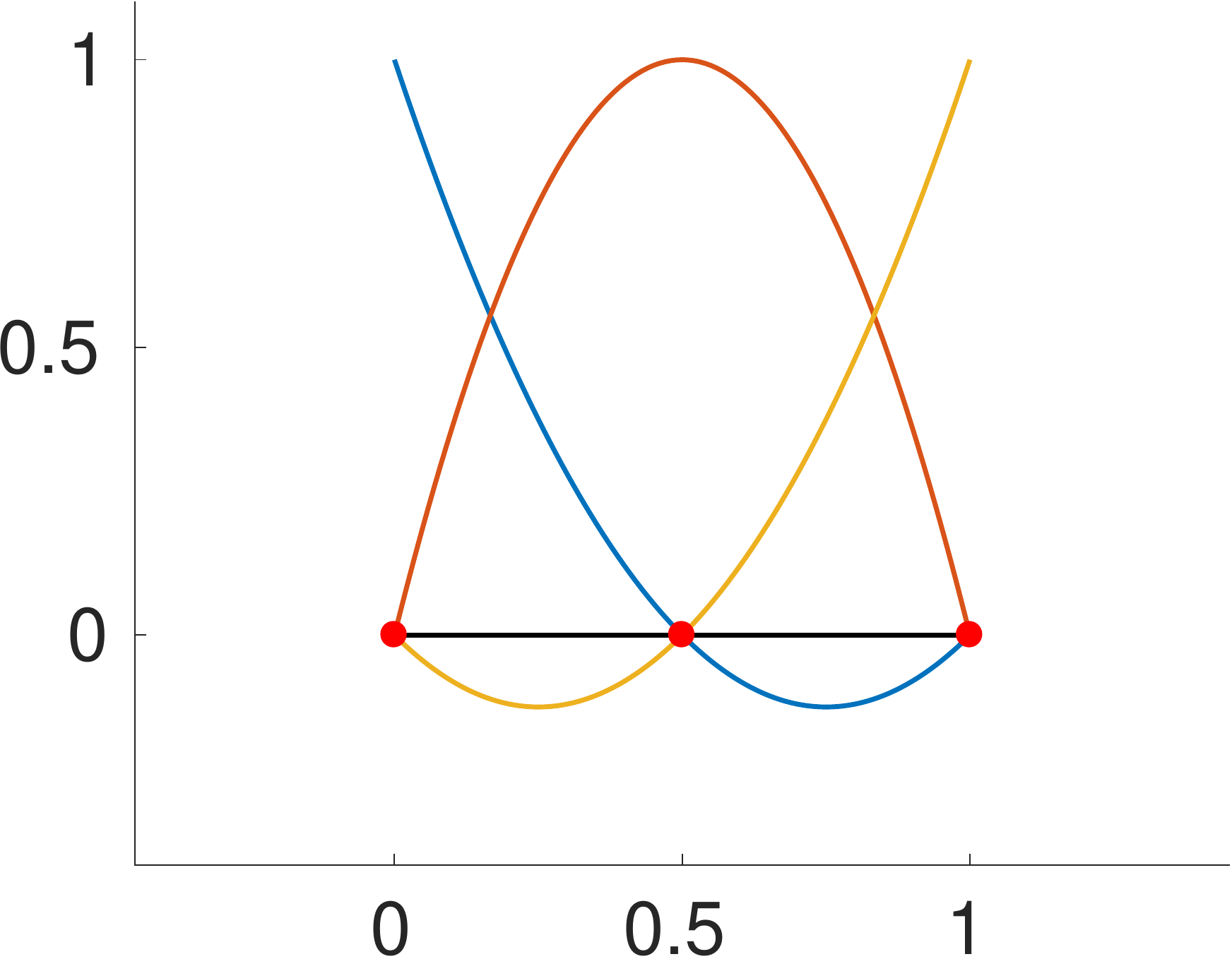}
	\hfill
	\includegraphics[width=0.30\linewidth]{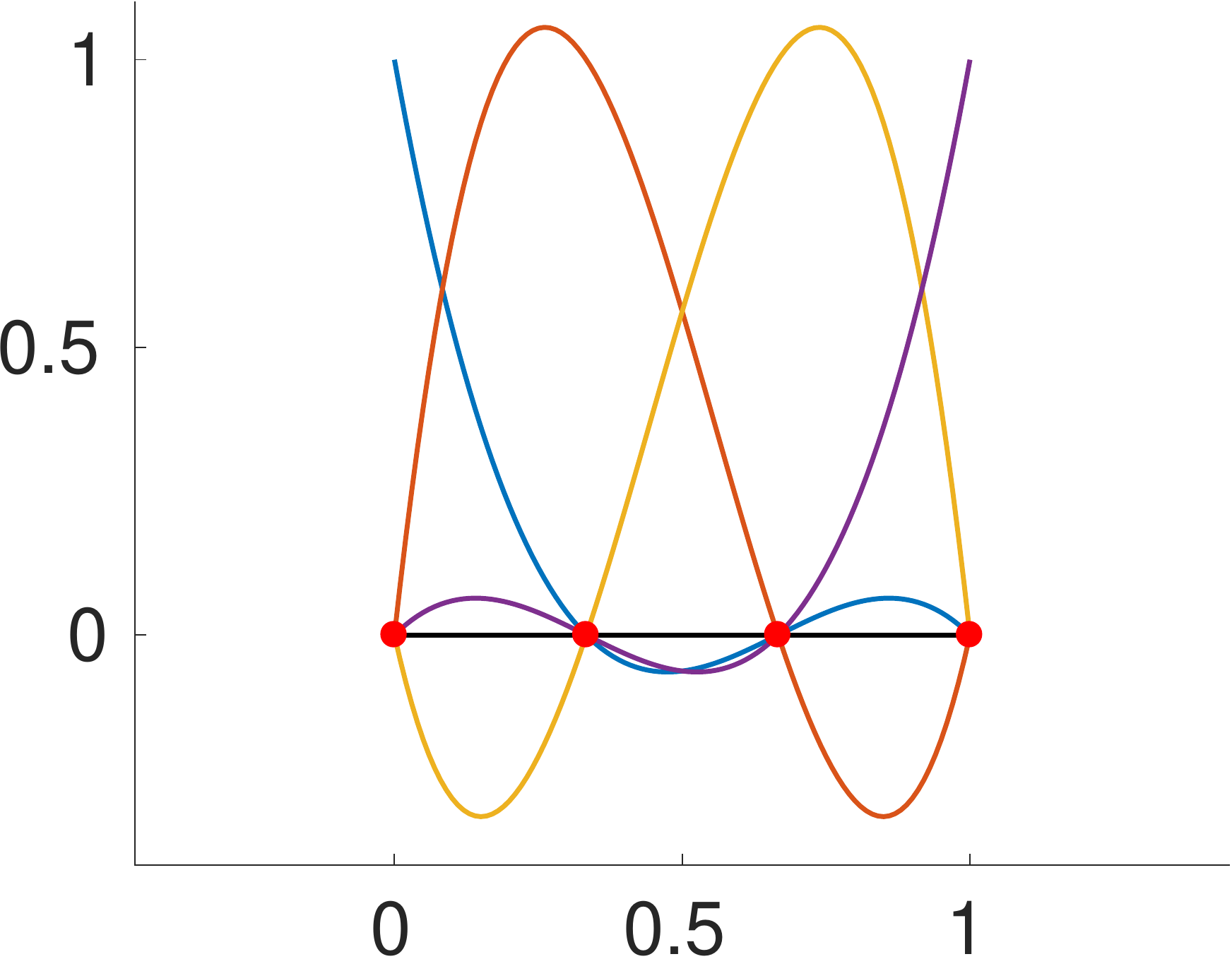}
	\caption{Complete set of basis functions $\{\LagrangeBasisEr{j}\}$ of $\PP_r(E)$ for $r \in \{1,2,3\}$ (from left to right).}
	\label{fig:illustration_of_Pr_basis_functions}
\end{figure}

\begin{figure}[htp]
	\centering
	\includegraphics[width=0.30\linewidth]{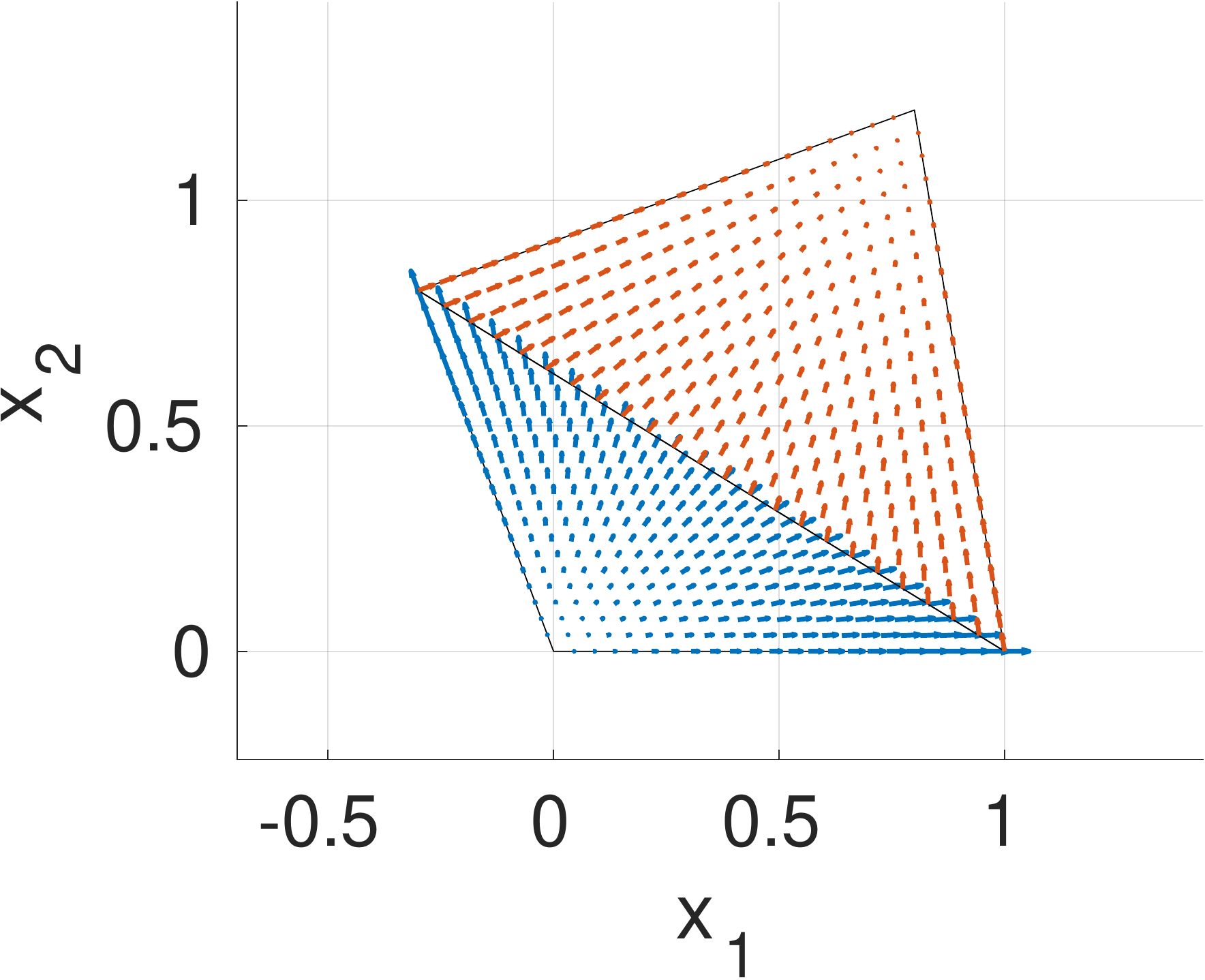}
	\hfill
	\includegraphics[width=0.30\linewidth]{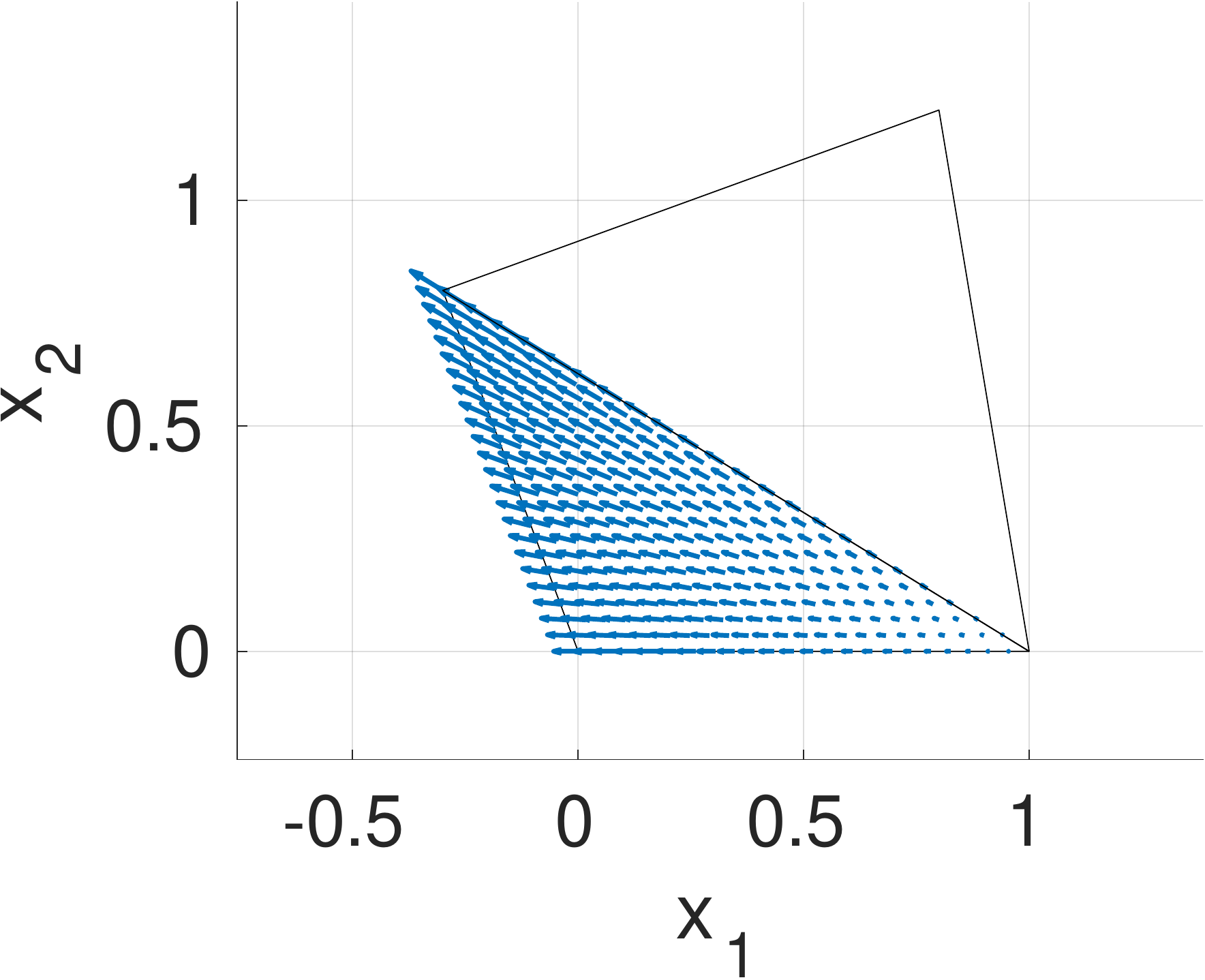}
	\hfill
	\includegraphics[width=0.30\linewidth]{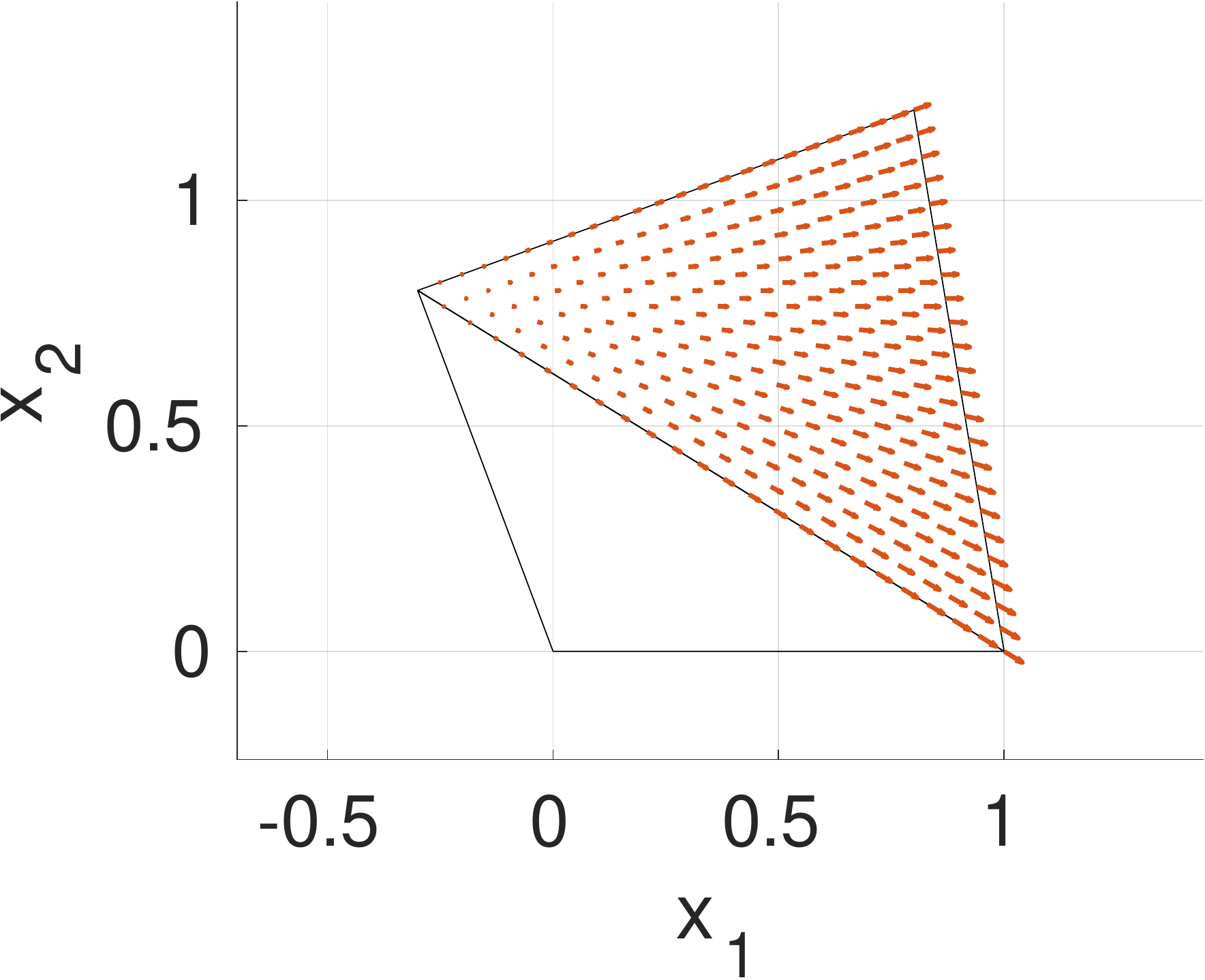}
	\\
	\includegraphics[width=0.30\linewidth]{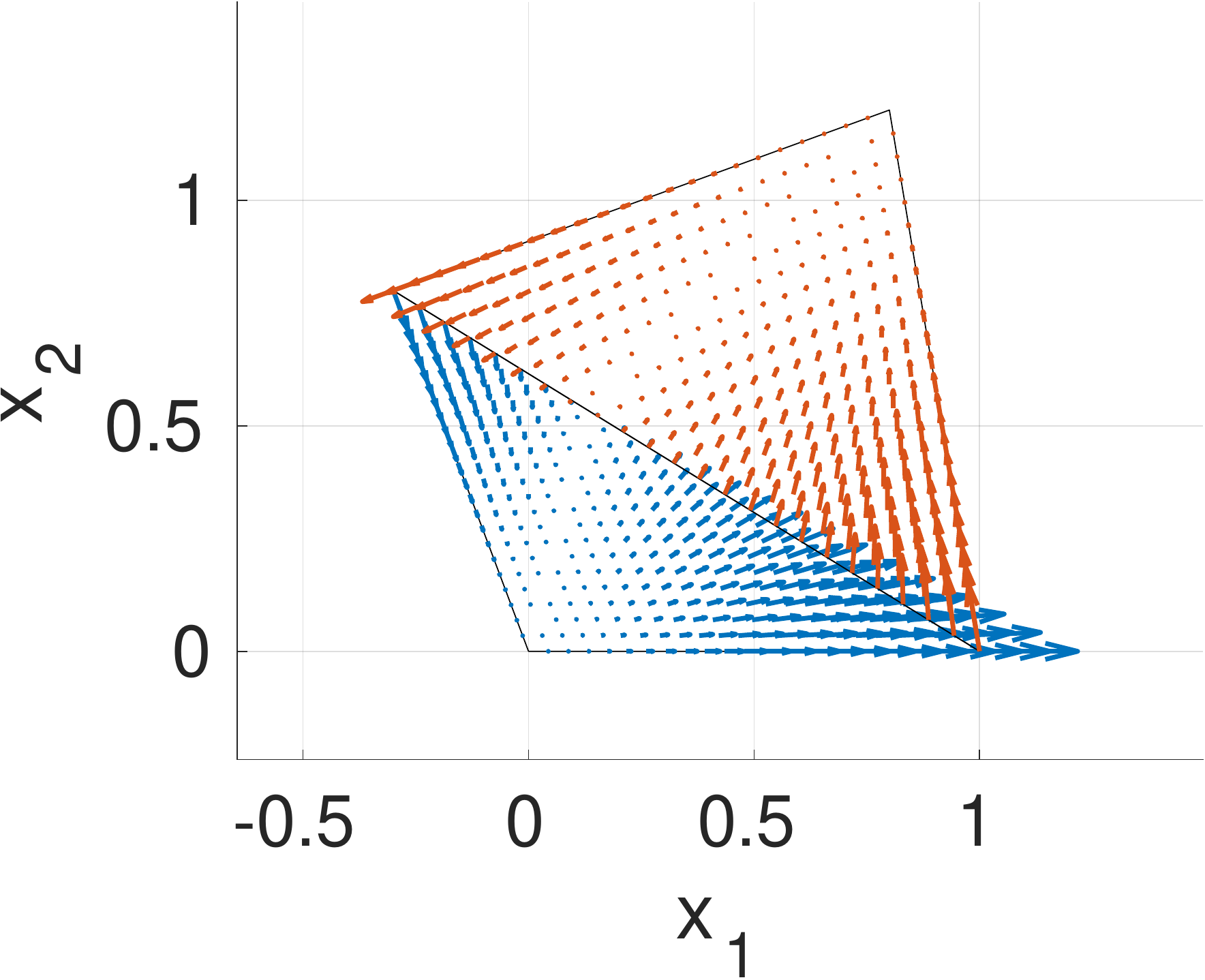}
	\hfill
	\includegraphics[width=0.30\linewidth]{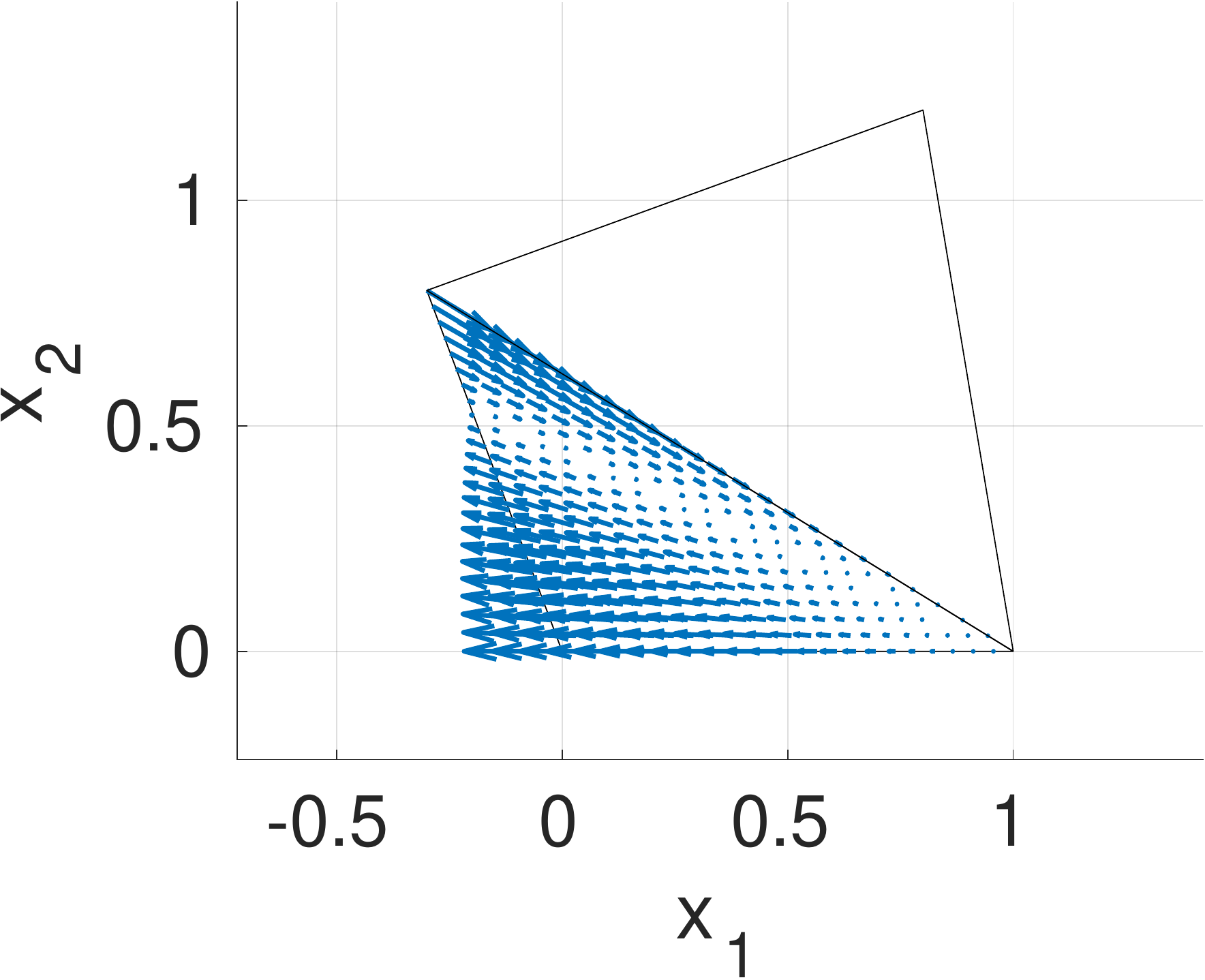}
	\hfill
	\includegraphics[width=0.30\linewidth]{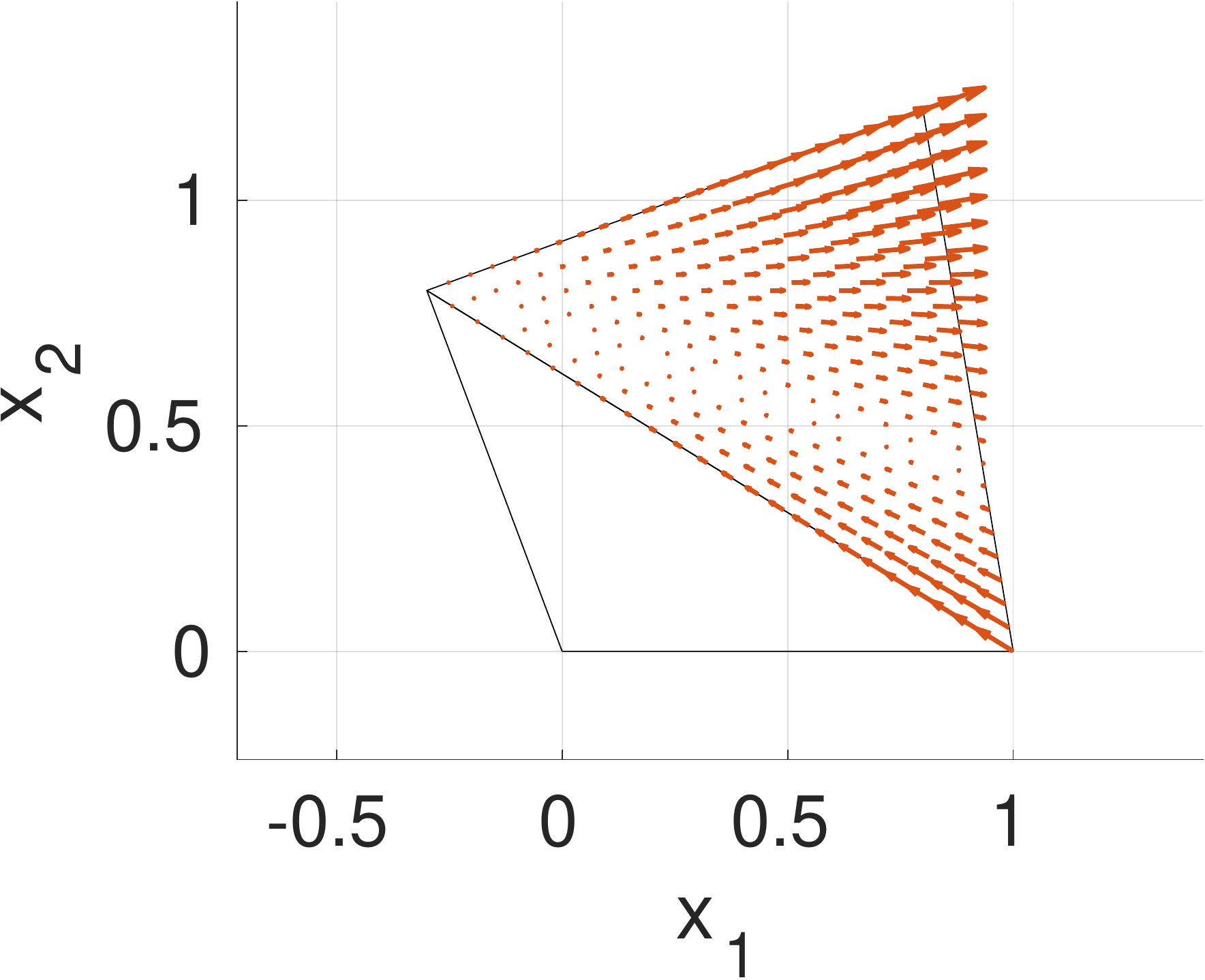}
	\caption{Some basis functions $\{\RTBasisE{j}\}$ of $\RT{r+1}(\Omega)$ for $r = 0$ (top row) and $r = 1$ (bottom row).}
	\label{fig:illustration_of_RT_basis_functions}
\end{figure}

\ifthenelse{\boolean{ispreprint}}%
	{\section*{Acknowledgments}}{\begin{acknowledgements}}
	This work was supported by DFG grants HE~6077/10--1 and SCHM~3248/2--1 within the \href{https://spp1962.wias-berlin.de}{Priority Program SPP~1962} (\emph{Non-smooth and Complementarity-based Distributed Parameter Systems: Simulation and Hierarchical Optimization}), which is gratefully acknowledged.
	The authors would like to thank Jan Blechta for help with custom quadrature schemes in \fenics.
	Part of this research was contrived while the second author was visiting the University of British Columbia, Vancouver.
	He would like to thank the Department of Computer Science for their hospitality.
	\added{Moreover, the authors would like to thank two anonymous reviewers for their constructive comments, which helped improve the presentation of the paper.}
\ifthenelse{\boolean{ispreprint}}%
{}{\end{acknowledgements}}


\IfFileExists{World.bib}
{
	\ifthenelse{\boolean{ispreprint}}{\bibliographystyle{plain}}{\bibliographystyle{erae}}
	\bibliography{World}
}
{
	\ifthenelse{\boolean{ispreprint}}{\bibliographystyle{plain}}{\bibliographystyle{erae}}
	\bibliography{paper}
}

\end{document}